\numberwithin{figure}{section}
\numberwithin{table}{section}
\newtheorem{thm}{Theorem}[section]
\newtheorem{lem}[thm]{Lemma}
\newtheorem{cor}[thm]{Corollary}
\newtheorem{prop}[thm]{Proposition}
\theoremstyle{definition}
\newtheorem{defn}[thm]{Definition}
\newtheorem{construction}[thm]{Construction}
\theoremstyle{remark}
\newtheorem{remark}[thm]{Remark}
\newtheorem*{remark*}{Remark}
\newcommand{\link}[2]{\ensuremath{\mathrm{Lk}(#1,#2)}}
\newcommand{\field}[1]{\ensuremath{\mathbb{#1}}}
\newcommand{\real}{\field{R}}
\newcommand{\integer}{\field{Z}}
\newcommand{\complex}{\field{C}}
\newcommand{\str}[1]{\ensuremath{\Sigma_{#1}}}
\newcommand{\eps}{\varepsilon}
\newcommand{\simg}{\sim}
\newcommand{\simh}{\simeq}
\newcommand{\linkedtriangles}[1]{\ensuremath{\mathcal{T}_{#1}}}
\newlength{\hgap}
\newlength{\vgap}
\newlength{\radius}
\thanks{The second author was supported in part by NSF Grant DMS-1607744}
\begin{document}
\title[Weakly linked embeddings of complete graphs in $\mathbb{R}^3$]{Weakly linked embeddings of pairs of complete graphs in $\mathbb{R}^3$}

\author[Di]{James Di}
\address{ByteDance, 250 Bryant Street, Mountain View, CA, 94041, USA }
\email{jamesyubai.di@gmail.com}

\author[Flapan]{Erica Flapan}
\address{Department of Mathematics, Pomona College, Claremont, CA 91711, USA}
\email{eflapan@pomona.edu}

\author[Johnson]{Spencer Johnson}
\address{Austin Community College, 5930 Middle Fiskville Rd, Austin, Texas 78752, USA}
\email{spencer.johnson@austincc.edu}

\author[Thompson]{Daniel Thompson}
\address{Washington Post, 1301 K St NW, Washington, DC, 20005, USA}
\email{dwt101092@gmail.com}

\author[Tuffley]{Christopher Tuffley}
\address{School of Fundamental Sciences, Massey University,
         Private Bag 11 222, Palmerston North 4442, New Zealand}
\email{c.tuffley@massey.ac.nz}

\subjclass[2020]{57M15, 57K10}
\keywords{Spatial graphs, complete graphs, intrinsic linking, linking number, weakly linked, strongly linked.}

\begin{abstract}
Let $G$ and $H$ be disjoint embeddings of complete graphs $K_m$ and $K_n$ in $\real^3$ such that some cycle in $G$ links a cycle in $H$ with non-zero linking number. We say that $G$ and $H$ are \emph{weakly linked} if the absolute value of the linking number of any cycle in $G$ with a cycle in $H$ is $0$ or $1$.   Our main result is an algebraic characterisation of when a pair of disjointly embedded complete graphs is weakly linked. 

As a step towards this result, we show that if $G$ and $H$ are weakly linked, then each contains either a vertex common to all triangles linking the other or a triangle which shares an edge with all triangles linking the other. All families of weakly linked pairs of complete graphs are then characterised by which of these two cases holds in each complete graph.
\end{abstract}

\maketitle

\section{Introduction}

The study of linked cycles within an embedded graph began in 1983 with Conway and Gordon's \cite{conway-gordon1983} and Sachs' \cite{sachs1983} result that every embedding of $K_6$ in $\mathbb{R}^3$ contains a pair of triangles with non-zero linking number.  Any graph with this property is said to be \emph{intrinsically linked}.  In the same paper, Sachs showed that each of the seven graphs in the Petersen family is intrinsically linked and no minor of any of them is intrinsically linked.  Then, in 1995, Robertson, Seymour, and Thomas \cite{RST} proved that these seven graphs are the only graphs which are minor minimal with respect to being intrinsically linked.  Since then, many results have been obtained about intrinsic linking of graphs.

In this paper, we explore how pairs of cycles in disjointly embedded complete graphs in $\mathbb{R}^3$ can be linked.  We consider linking from a purely algebraic point of view.  Thus we say that disjoint simple closed curves $C$ and $D$ are \emph{linked} if and only if their pairwise linking number $\link{C}{D}$ is non-zero. We introduce the following definitions.

\begin{defn}
We say that disjointly embedded simple closed curves $C$ and $D$ in $\real^3$ are \emph{strongly linked} if $|\link{C}{D}|\geq 2$, 
and \emph{weakly linked} if $|\link{C}{D}|=1$.\end{defn}

\begin{defn}
We say that disjointly embedded graphs $G$ and $H$ in $\real^3$ are \emph{strongly linked} if some cycle in $G$ strongly links a cycle in $H$; and \emph{weakly linked} if some cycle in $G$ links a cycle in $H$, but no cycle in $G$ strongly links any cycle in $H$.
\end{defn}

Our main result is a characterisation of all weakly linked embeddings of $G\cong K_m$ and $H\cong K_n$ in terms of the pairwise linking numbers between triangles in $G$ and triangles in $H$.  Since any cycle in a complete graph can be decomposed as a sum of triangles, this completely determines all pairwise linking numbers between cycles in $G$ and cycles in $H$. 

We build our results in stages as follows. In Section~\ref{sec:SCC}, we prove Theorem~\ref{thm:star}, which characterises
 weak linking between a simple closed curve and an embedded complete graph $K_n$. Since the complete graph $K_3$ is a cycle, this also characterises  weak linking of $K_m$ and $K_n$ when $\min\{m,n\}=3$.  In Section~\ref{sec:theta}, we prove Theorem~\ref{thm:thetacurve}, which characterises  weak linking between a theta curve (i.e., a graph with two vertices joined by three edges, homeomorphic to the Greek letter $\Theta$) and a complete graph $K_n$. Next, in Section~\ref{sec:K4Kn}, we prove Theorem~\ref{thm:K4Kn}, which characterises  weak linking of $K_4$ and $K_n$, for $n\geq4$.  In Section~\ref{sec:nca}, we prove Theorem~\ref{thm:nocommonapex}, which is a technical result needed for our characterisation of weakly linked embeddings of $K_m$ and $K_n$.  Finally in Section~\ref{sec:KmKn}, we prove the following dichotomy. 

\begin{thm}[Theorem~\ref{thm:common-vertex-or-triangle} paraphrased]
\label{thm:dichotomy}
Let $m\geq 5$ and $n\geq 4$, and suppose that $G\cong K_m$ and $H\cong K_n$ are weakly linked in $\mathbb{R}^3$.  Then exactly one of the following holds:
\begin{enumerate}
\item
There is a vertex $p$ of $G$ common to all triangles of $G$ linking $H$ (``$G$ contains a common vertex'').
\item
There is a triangle $T^*$ in $G$ such that a triangle $T\neq T^*$ of $G$ links $H$ if and only if it shares an edge with $T^*$ (``$G$ contains a common triangle'').
\end{enumerate}
\end{thm}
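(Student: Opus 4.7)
My approach is to analyse the set $\mathcal{L}$ of triangles of $G$ that link $H$. Weak linking forces $\ell(T):=\link{T}{H}\in\{-1,0,+1\}$ for every triangle $T\subseteq G$, and $\mathcal{L}\neq\emptyset$ because some cycle of $G$ links $H$ and every cycle of a complete graph is a sum of triangles. The algebraic backbone of the argument is the tetrahedral balance relation: for any four vertices $a,b,c,d$ of $G$ one checks directly that $abc-abd+acd-bcd=0$ as $1$-cycles in $G$, so
\[
\ell(abc)-\ell(abd)+\ell(acd)-\ell(bcd)=0 \quad \text{in } \mathbb{Z}.
\]
Since every term lies in $\{-1,0,+1\}$, this relation sharply restricts the possible restrictions $\mathcal{L}\big|_{K_4}$, and those restrictions are in turn classified by Theorem~\ref{thm:K4Kn} applied to each four-vertex subset of $G$.

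If $|\mathcal{L}|=1$ then case (1) holds trivially, so I may assume $|\mathcal{L}|\geq 2$ and that case (1) fails; the goal is then to produce the triangle $T^*$ of case (2). Fixing any $T_0\in\mathcal{L}$, the failure of (1) supplies, for each vertex $v$ of $T_0$, a triangle $T_v\in\mathcal{L}$ with $v\notin T_v$. Working inside the induced complete subgraph on $V(T_0)\cup V(T_{v_1})\cup V(T_{v_2})\cup V(T_{v_3})$ and applying Theorem~\ref{thm:K4Kn} to each $K_4$ inside it, I use the balance relation to locate a candidate common triangle $T^*$ whose edges are forced to appear in every other linking triangle in this configuration. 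Theorem~\ref{thm:nocommonapex} is then invoked to eliminate the residual dispersed patterns—in particular those (like $\mathcal{L}\supseteq\{abc,abd,cde\}$) which pass the local $K_4$ test but cannot realise a common triangle—so the linking triangles outside $\{T^*\}$ are precisely those sharing an edge with $T^*$.

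The converse direction of case (2)—that every triangle sharing an edge with $T^*$ actually links $H$—follows from the balance relation: given such a candidate $T$, the $K_4$ spanned by $T$ together with $T^*$ and a known linking triangle sharing an adjacent edge of $T^*$ forces $\ell(T)=\pm 1$. Finally, mutual exclusivity of (1) and (2) for $m\geq 5$ is straightforward: a common vertex $p$ would have to lie on every triangle sharing an edge with $T^*$, but for $m\geq 5$ there exists such a triangle avoiding $p$ (e.g.\ $vwx$ with $vw$ an edge of $T^*$ not through $p$ and $x\notin V(T^*)\cup\{p\}$).

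The main obstacle lies in the middle step. There is no Helly-type theorem for triangles (the triple $\{abc,abd,cde\}$ has pairwise non-empty vertex intersections yet empty global intersection), so the $K_4$ classification together with the balance relation do not on their own force a common triangle. I expect Theorem~\ref{thm:nocommonapex} to do the decisive work here, ruling out the dispersed configurations that survive the local analysis and narrowing the surviving possibilities down to either a common vertex (case (1)) or a common triangle (case (2)).
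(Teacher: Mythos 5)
There is a genuine gap here, and it is the one you flag yourself: the ``middle step'' you defer is precisely the content of the theorem, and nothing in the proposal closes it. Two concrete problems. First, your framework is not set up so that Theorem~\ref{thm:nocommonapex} can be applied at all: its hypotheses concern two triangles $T_1,T_2$ \emph{in $H$} linking $G$ in stars with no common apex, whereas your configurations ($T_0$ together with the $T_v$, or the dispersed pattern $\{abc,abd,cde\}$) consist of triangles \emph{in $G$}, and you never explain how to produce from them triangles of $H$ whose stars have no common apex. The paper's proof opens with exactly the structural move you are missing: by Theorem~\ref{thm:star} each triangle $T\in\linkedtriangles{H}$ links $G$ in a star $\str{T}$, and $\linkedtriangles{G}=\bigcup_{T\in\linkedtriangles{H}}\str{T}$; the theorem then becomes a classification of which stars can coexist in $\mathcal{S}_G=\{\str{T}:T\in\linkedtriangles{H}\}$, with Theorem~\ref{thm:nocommonapex} governing pairs without a common apex. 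This splits into an all-fans case (handled via Remark~\ref{rem:nodisjointfans}), a proper-star case for $m\geq6$, and a delicate $m=5$ case where extra candidates survive the pairwise analysis --- the stars $p_4|p_2p_3|p_1p_5$ and $p_5|p_2p_3|p_1p_4$ and the fan with axis $p_4p_5$ --- the last of which is excluded by a separate argument through Case~\eqref{case:K4Kn-commonvertex} of Theorem~\ref{thm:K4Kn} applied to the $K_4$ subgraph $\langle p_1,p_2,p_4,p_5\rangle$. None of this case analysis is replaced by anything in your sketch. Relatedly, your scalar $\ell(T):=\link{T}{H}$ is not well defined: a triangle of $G$ has a linking number with each cycle of $H$, not a single one with $H$, so the tetrahedral relation only holds in $H_1(\real^3-D)$ for each fixed cycle $D$ of $H$, which substantially weakens what it can force.

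Second, your argument for the ``if'' direction of (2) does not work. For $T=p_1p_2x$, the relation $\sum_i[C_i]=0$ in the $K_4$ on $\{p_1,p_2,p_3,x\}$ is entirely consistent with $\link{T}{D}=0$ for every cycle $D$ of $H$ (the linking can be carried by the faces $p_1p_3x$ and $p_2p_3x$); worse, those other faces also share edges with $T^*$, so invoking their linking is circular. In the paper this direction costs nothing extra: once the stars $\Sigma_1=p_1\{p_2,p_3\}I^*$ and $\str{T_2}$ (either $\Sigma_2$ or the fan $-\Sigma_{23}$) are shown to lie in $\mathcal{S}_G$, every triangle sharing an edge with $T^*$ belongs to one of them and hence links a specific triangle of $H$. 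Your mutual-exclusivity argument for $m\geq5$ is correct, but the heart of the proof --- producing $T^*$ and pinning down $\mathcal{S}_G$ --- is deferred rather than done.
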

Then in Theorem~\ref{thm:KmKn-triangle}, we characterise weak linking between $G$ and $H$ when at least one of $G$ and $H$ contains no vertex common to all triangles linking the other; while in Theorem~\ref{thm:KmKn-pq}, we characterise weak linking when both $G$ and $H$ contain a vertex common to all triangles linking the other.  

The concept of a \emph{star} (defined below) will play a key role in our results.

\begin{defn}
Let $\bigl(\{p\},O,I\bigr)$ be an ordered partition of the vertices of $K_n$, where $O=\{q_1,\dots,q_k\}$ and $I=\{r_1,\dots,r_\ell\}$. The \emph{star} $pOI$ consists of all oriented triangles of the form $pqr$, where $q\in O$ and $r\in I$. We also express the star $pOI$ as $p|q_1\cdots q_k|r_1\cdots r_\ell$. 

The vertex $p$ is said to be the \emph{apex} of the star. A star $pOI$ is \emph{proper} if neither $O$ nor $I$ is a singleton, and \emph{improper} otherwise.  
Note that the improper stars $p\{q\}I$ and $qI\{p\}$ are equal.
We also refer to an improper star $p\{q\}I$ as a \emph{fan} with \emph{axis} $pq$. 

If $\Sigma=pOI$ is a star, then we define $-\Sigma$ to be the star $-\Sigma=pIO$. We say that $-\Sigma$ is obtained by reversing the orientation of $\Sigma$. 
\end{defn}

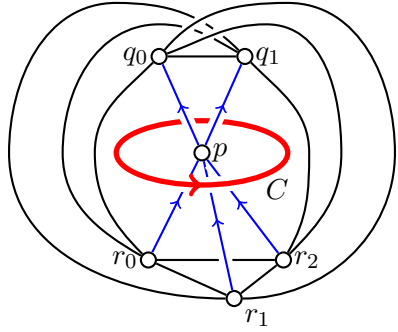
\begin{figure}[t]
\begin{center}
\setlength{\hgap}{1cm}
\setlength{\vgap}{1cm}
\begin{tikzpicture}[vertex/.style={circle,draw,minimum size = 2mm,inner sep=0pt},thick,scale=0.57]
\node (p) at (0,0) [vertex] {};
\node (0) at (0,0) {};
\node (q0) at (-\hgap,2.25*\vgap) [vertex] {};
\node (q1) at (\hgap,2.25*\vgap) [vertex] {};
\node (r0) at (-1.25*\hgap,-2.5*\vgap) [vertex] {};
\node (r1) at (0.75*\hgap,-3.4*\vgap) [vertex] {};
\node (r2) at (1.9*\hgap,-2.5*\vgap) [vertex] {};
\foreach \j/\k in {0/1,1/2,2/0} 
    \draw (r\j) -- (r\k);
\foreach \j in {0,1,2} 
   {\draw [color=white,line width=5] (0) -- (r\j);
    \draw [color=blue,decoration={markings,mark=at position 0.5 with {\arrow{<}}},postaction={decorate}] (p) -- (r\j);}
\draw (0,0) [color=white,line width=6] ellipse (2cm and 0.75cm);
\draw (0,0)[decoration={markings, mark=at position 0.95 with \node [below] {$C$};},postaction={decorate}] ellipse (2cm and 0.75cm);
\draw (0,0)[color=red,line width=2,decoration={markings, mark=at position 0.75 with {\arrow{>}}},postaction={decorate}] ellipse (2cm and 0.75cm);
\foreach \j in {0,1} 
   {\draw [color=white,line width=5] (p) -- (q\j);
     \draw [color=blue,decoration={markings,mark=at position 0.5 with {\arrow{>}}},postaction={decorate}] (p) -- (q\j);}
\draw (q0) -- (q1);
\draw (q1) to [out=150,in=0] (-1,3) to [out=180,in=90] (-3.25,0) to [out=-90,in=150] (r0);
\draw [fill,color=white] (0.15*\hgap,2.73*\vgap) circle (0.125cm);
\draw [fill,color=white]  (-0.32*\hgap,2.91*\vgap) circle (0.125cm);
\draw (q1) to [out=135,in=0] (-2,3.5)  to [out=180,in=90] (-4.5,0) to [out=-90,in=180] (r1);
\draw [fill,color=white] (0.39*\hgap,2.825*\vgap) circle (0.125cm);
\draw (q0) to [out=20,in=180] (1.5,3) to [out=0,in=90] (3.25,0) to [out=-90,in=45] (r2);
\draw [fill,color=white] (-0.05,3.07*\vgap) circle (0.125cm);
\draw (q0) to [out=50,in=180] (2,3.5) to [out=0,in=90] (4.5,0) to [out=-90,in=0] (r1);
\draw (q0) to [out=-135,in=90] (-2.5,0) to [out=-90,in=135] (r0);
\draw (q1) to [out=-45,in=90] (2.5,0) to [out=-90,in=60] (r2);
\foreach \v/\p/\l in {p/right/p,q0/left/q_0,q1/right/q_1}
     \node [\p] at (\v) {$\l$};
\foreach \r/\p in {0/{left},1/{below right},2/{right}} 
     \node [\p] at (r\r) {$r_\r$};
\end{tikzpicture}
\caption{An embedding of $K_6=\langle p,q_0,q_1,r_0,r_1,r_2\rangle$ and a curve $C$ such that $C$ links $K_6$ in the star $p|q_0q_1|r_0r_1r_2$. }
\label{fig:K6star}
\end{center}
\end{figure}

\begin{defn}
Let $n\geq3$, and let $C$ be an oriented simple closed curve disjoint from an embedding of $K_n$ in $\mathbb{R}^3$. We say that {\em $C$ links $K_n$ in the star $pOI$} if for all oriented triangles $T$ of $K_n$ we have
\[
\link{C}{T}=\begin{cases}
             +1 & \text{if $T$ is a triangle of the star $pOI$}, \\
             -1 & \text{if $-T$ is a triangle of the star $pOI$}, \\\
              0 & \text{otherwise}.
           \end{cases}
\]
\end{defn}

Figure~\ref{fig:K6star} shows an example of an 
embedding of $K_6=\langle p,q_0,q_1,r_0,r_1,r_2\rangle$ and a curve $C$ which links $K_6$ in the star $p|q_0q_1|r_0r_1r_2$.  The vertex $p$ together with all incident edges (shown in blue) form a star, hence the name.

Unoriented stars with $\min\{|O|,|I|\}\leq2$ were previously used by Flapan, Naimi, and Pommersheim~\cite{I3L} and Drummond-Cole and O'Donnol~\cite{drummondcole-odonnol2009} to study intrinsically $n$-linked graphs.  In particular, a graph $G$ is said to be \emph{intrinsically $n$-linked} or I$n$L if every embedding of $G$ in $\real^3$ contains a non-split link of $n$-components. Flapan, Naimi, and Pommersheim used stars to prove that $K_{10}$ is I3L; and then Drummond-Cole and O'Donnol used them to show that for every $n\geq 2$, $K_{\lfloor\frac{7}{2}n\rfloor}$ is I$n$L.

\section{Weak linking of a simple closed curve with $K_n$}
\label{sec:SCC}
The main result of this section is the following theorem, which shows that weak linking between a simple closed curve and a complete graph can be characterised in terms of stars.

\begin{thm}
\label{thm:star}
Let $n\geq 3$, and let $C$ be an oriented simple closed curve disjoint from an embedding of $K_n$ in $\real^3$ such that $C$ links some cycle of $K_n$. Then $C$ weakly links $K_n$ if and only if $C$ links $K_n$ in a star. 
\end{thm}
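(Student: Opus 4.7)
The plan is to reduce the entire theorem to the combinatorics of a single skew-symmetric function on pairs of vertices, and to analyse its induced digraph. Fix any vertex $v_0\in V(K_n)$. For distinct $v,w\in V\setminus\{v_0\}$ set $f(v,w) := \link{C}{v_0 v w}$, and extend by $f(v_0,\cdot)=f(\cdot,v_0)=0$. Applying the $4$-cycle decomposition $\link{C}{abcd}=\link{C}{abc}+\link{C}{acd}$ repeatedly (fan-triangulating cycles from $v_0$, or from any vertex if $v_0$ is absent), one obtains the master formula
\[
\link{C}{\gamma} \;=\; \sum_{i=1}^{k} f(u_i,u_{i+1})
\]
for every oriented simple cycle $\gamma = u_1 u_2 \cdots u_k$ of $K_n$, with indices mod $k$. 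Weak linking thus becomes the system of inequalities $|\sum_i f(u_i,u_{i+1})|\leq 1$ over all simple cyclic sequences in $V(K_n)$.

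For the ``if'' direction, I would take $v_0=p$ and define $\phi\colon V\setminus\{p\}\to\{0,1\}$ by $\phi|_O=0$ and $\phi|_I=1$. The star hypothesis then gives $f(v,w)=\phi(w)-\phi(v)$ for $v,w\neq p$, so the master sum telescopes: $\link{C}{\gamma}=0$ when $\gamma$ avoids $p$, and $\link{C}{\gamma}=\phi(u_k)-\phi(u_2)\in\{-1,0,1\}$ when $u_1=p$ lies on $\gamma$. The triangle $pqr$ with $q\in O$, $r\in I$ has linking $1$, witnessing the final clause that $C$ links some cycle.

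For the ``only if'' direction, I form the digraph $D$ on $V\setminus\{v_0\}$ with arrow $v\to w$ iff $f(v,w)=1$, and feed weak linking progressively longer cycles. Three structural facts about $D$ result: (i) the $4$-cycle $v_0uvw$ forbids a directed path $u\to v\to w$ in $D$ (else its linking is $2$), so $V\setminus\{v_0\}$ partitions into sources $O$, sinks $I$, and isolated vertices $U$, with every arrow of $D$ going $O\to I$; (ii) the $5$-cycle $v_0 v^* w v w^*$ forbids $v\in O$, $w\in I$ with $f(v,w)=0$ (since then $v$'s out-arrow would go to some $w^*\neq w$ and $w$'s in-arrow would come from some $v^*\neq v$, producing linking $2$), so $D$ is complete bipartite between $O$ and $I$; (iii) the $6$-cycle $v_0 v_1 w_1 u v_2 w_2$ forbids $u\in U$ whenever $|O|,|I|\geq 2$. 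For small $n$ the premises of (ii) and (iii) are numerically vacuous (e.g.\ $|O|,|I|\geq 2$ requires $n\geq 5$), so the small-$n$ cases are dispatched by the same counting remark rather than by a separate argument.

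The star is then assembled by case analysis. If $U=\emptyset$, the star $v_0|O|I$ describes the linking, and this is verified by directly expanding $\link{C}{T}=\sum f$ on each triangle $T$ of $K_n$. If $U\neq\emptyset$, claim (iii) forces $|O|=1$ (say $O=\{v_1\}$) or $|I|=1$, and the correct star is $v_1|I|\{v_0\}\cup U$ (respectively the symmetric variant), with the apex shifted from $v_0$ to the unique source $v_1$ (or unique sink $w_1$); again this is verified by substitution into the master formula, using that $U$-vertices are isolated in $D$. The main obstacle is precisely this apex-shift in the case $U\neq\emptyset$: the normalisation vertex $v_0$ is \emph{not} in general the apex of the star, so the star cannot be read off naively from $(v_0,O,I)$ and one must re-identify the apex from the digraph data before writing it down.
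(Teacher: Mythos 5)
Your proposal is correct, and it takes a genuinely different route from the paper. The paper bootstraps through subgraph cases: $K_4$ (Lemma~\ref{lem:K4case}), then $K_5$ (Lemma~\ref{lem:K5case}) by intricate casework, then $n\geq6$ (Lemma~\ref{lem:Kmgeq6}), where the key step imports Lemma~\ref{lem:tripleimpliesstrong} (triple link implies strong link, from Flapan's earlier work) to force all linking triangles to pairwise intersect, and only then extracts the common apex and the partition $\{O,I\}$. You instead normalise at a base vertex $v_0$, encode all the data in the skew-symmetric function $f(v,w)=\link{C}{v_0vw}\in\{0,\pm1\}$, and prove the master formula by coning $1$-chains from $v_0$ (the edges $v_0u_i$ cancel in pairs, and your convention $f(v_0,\cdot)=0$ correctly absorbs the case $v_0\in\gamma$); the rest is digraph combinatorics with $4$-, $5$- and $6$-cycle obstructions. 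I checked all three structural claims and both assembly cases, including the apex shift: when $U\neq\emptyset$ and $O=\{v_1\}$, substitution into the master formula does confirm that $C$ links the star $v_1|I|(\{v_0\}\cup U)$, with every triangle of this star having linking number $+1$ and all others $0$, and the symmetric variant works when $|I|=1$. Your identification of the apex shift as the main hazard is exactly right --- the paper's $K_4$ case exhibits the same phenomenon, where the fan's axis need not pass through any preselected vertex. What your route buys: it is self-contained (no appeal to the external topological lemma), uniform in $n$ (the small-$n$ premises of claims (ii) and (iii) are indeed vacuous by counting, since the $5$- and $6$-cycle configurations need $5$ resp.\ $6$ distinct vertices), and it gives an explicit recipe for reading the star off the $\binom{n-1}{2}$ numbers $f(v,w)$. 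What the paper's route buys: its intermediate results do double duty elsewhere --- Lemma~\ref{lem:tripleimpliesstrong} is reused for Theorem~\ref{thm:notdisjoint}, and the $K_4$ and $K_5$ lemmas are invoked repeatedly within the proof of Lemma~\ref{lem:Kmgeq6} itself. Two one-line points you should make explicit in a write-up: the hypothesis that $C$ links some cycle, fed through the master formula, is what makes $O$ and $I$ nonempty before you write down the star $v_0|O|I$; and in claim (i) the degenerate configuration $u\to v\to u$ is excluded by skew-symmetry of $f$ rather than by the $4$-cycle.
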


The case $n=3$ is immediate, using any vertex as the apex and the remaining two vertices as $O$ and $I$. For $n\geq4$ we prove Theorem~\ref{thm:star} as a series of lemmas, beginning with the ``if'' direction in Lemma~\ref{lem:star-implies-weak}:

\begin{lem}
\label{lem:star-implies-weak}
Let $C$ be an oriented cycle disjoint from an embedding of $K_n$ in $\mathbb{R}^3$. If $C$ links $K_n$ in a star, then $C$ weakly links $K_n$.
\end{lem}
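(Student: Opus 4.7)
The plan is to decompose an arbitrary cycle of $K_n$ as a signed sum of triangles and then read off the linking number summand by summand from the star. Concretely, since $K_n$ is complete, any cycle $D=v_0v_1\cdots v_{k-1}v_0$ can be triangulated by fanning at $v_0$, giving the $1$-chain identity $D=\sum_{i=1}^{k-2} v_0v_iv_{i+1}$. Additivity of the linking number over $1$-chains in $\real^3\setminus C$ then yields
\[
\link{C}{D}=\sum_{i=1}^{k-2}\link{C}{v_0v_iv_{i+1}}.
\]

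The argument then splits according to whether $D$ contains the apex $p$. If $p\notin D$, I would fan $D$ through any of its vertices; the resulting triangles all avoid $p$, so by the definition of $C$ linking $K_n$ in the star $pOI$, every summand vanishes and $\link{C}{D}=0$. If instead $p\in D$, write $D=pv_1v_2\cdots v_{k-1}p$ and fan through $p$ to obtain $D=\sum_{i=1}^{k-2} pv_iv_{i+1}$. The star hypothesis now gives
\[
\link{C}{pv_iv_{i+1}}=\begin{cases} +1 & v_i\in O,\ v_{i+1}\in I,\\ -1 & v_i\in I,\ v_{i+1}\in O,\\ 0 & \text{otherwise.}\end{cases}
\]

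The key observation is that this expression is a discrete derivative: setting $\sigma(v)=1$ if $v\in I$ and $\sigma(v)=0$ if $v\in O$, each summand above equals $\sigma(v_{i+1})-\sigma(v_i)$. The total therefore telescopes to $\sigma(v_{k-1})-\sigma(v_1)\in\{-1,0,+1\}$, so $|\link{C}{D}|\le 1$ in every case. Finally, since $O$ and $I$ are both nonempty the star $pOI$ contains at least one triangle $T$, for which $\link{C}{T}=\pm 1$, so $C$ does link some cycle of $K_n$. Together these facts give that $C$ weakly links $K_n$. There is no serious obstacle in the argument; the main point of care is simply recognising the telescoping structure hidden in the star hypothesis, and verifying once the chain-level identity $D=\sum v_0v_iv_{i+1}$ so that additivity of linking number may be invoked.
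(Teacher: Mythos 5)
Your proof is correct, and it follows the paper's overall strategy---decompose a cycle $D$ of $K_n$ into triangles as $1$-chains and use additivity of the linking number in $H_1(\real^3-C)$---but it handles the case $p\in D$ by a genuinely different mechanism. The paper peels off the single triangle $T=pv_1v_{k-1}$, writing $D=T+D'$ where $D'=v_1v_2\cdots v_{k-1}$ avoids $p$, so that $[D]=[T]\in\{0,\pm1\}$ by the apex-free case already established. You instead fan the entire cycle at $p$ and observe that the star hypothesis makes each summand $\link{C}{pv_iv_{i+1}}$ equal to the discrete derivative $\sigma(v_{i+1})-\sigma(v_i)$ of the indicator function $\sigma$ of $I$ (this uses, correctly, that every vertex of $D$ other than $p$ lies in $O\cup I$, since $\bigl(\{p\},O,I\bigr)$ partitions the vertices and $D$ is simple), so the sum telescopes to $\sigma(v_{k-1})-\sigma(v_1)\in\{0,\pm1\}$. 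The two arguments in fact compute the same quantity, since $\link{C}{pv_1v_{k-1}}=\sigma(v_{k-1})-\sigma(v_1)$ by the star hypothesis; what your version buys is a self-contained closed-form for $\link{C}{D}$ that does not route the apex case through the apex-free case, at the cost of being slightly longer than the paper's one-triangle reduction. A minor merit of your write-up is that you explicitly justify that $C$ links some cycle of $K_n$ (because $O$ and $I$ are nonempty in the paper's definition of a star), a point the paper's proof passes over with ``Since $C$ links $K_n$.''
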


\begin{proof}
Suppose that $C$ links the star $pOI$ in $K_n$, and let $D=v_0v_1\cdots v_{k-1}$ be a $k$-cycle in $K_n$. We first show that if $D$ does not contain $p$, then $D$ does not link $C$.

To do this, decompose $D$ as the sum of the triangles $T_i=v_0v_iv_{i+1}$, for $1\leq i\leq k-2$, so that in the homology group $H_1(\mathbb{R}^3-C)$ we have
\[
[D] = \sum_{i=1}^{k-2} [T_i].
\]
Then since $C$ links $K_n$ in the star $pOI$, and $D$ does not contain $p$, we have $[T_i]=0$ for all $i$. Therefore $[D]=0$, showing that $D$ does not link $C$.

Suppose now that $D$ does contain $p$. Since $C$ links $K_n$ in the star $pOI$, $C$ does not strongly link any triangle in $K_n$.  Thus we may assume that $k\geq 4$.
Assume without loss of generality that $v_{0}=p$, and
let $T=v_0v_1v_{k-1}$ and $D'=v_1v_2\ldots v_{k-1}$. Then $T$ is a triangle, $D'$ is a $(k-1)$-cycle, and $D=T+D'$  as $1$-chains in $K_n$. The cycle $D'$ does not contain $p$ so by the previous paragraph, in $H_1(\mathbb{R}^3-C)$ we have
\[
[D]=[T]+[D']=[T]\in\{0,\pm1\}.
\]
Therefore $D$ does not strongly link $C$.  Since $C$ links $K_n$, it follows that $C$ weakly links $K_n$, as required.
\end{proof}

In order to prove the ``only if'' direction of Theorem~\ref{thm:star}, we first prove the case $n=4$ in Lemma~\ref{lem:K4case}; then we use Lemma~\ref{lem:K4case} to prove the case $n=5$ in Lemma~\ref{lem:K5case}; then finally we use Lemma~\ref{lem:K5case} to prove the case $n\geq6$ in Lemma~\ref{lem:Kmgeq6}.  

\begin{lem}
\label{lem:K4case}
Let $C$ be an oriented simple closed curve which weakly links an embedding of $K_4$ in $\mathbb{R}^3$.  Then $C$ links $K_4$ in a fan. 
\end{lem}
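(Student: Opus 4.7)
The plan is to reduce the statement to a short combinatorial check on linking numbers of the four triangles of $K_4$. Label the vertices $1,2,3,4$, and let $T_i$ denote the triangle of $K_4$ opposite vertex $i$, oriented as a face of the tetrahedron with the outward-normal convention, so that as $1$-chains $T_1 + T_2 + T_3 + T_4 = 0$. Setting $a_i = \link{C}{T_i}$, this relation gives $a_1 + a_2 + a_3 + a_4 = 0$, while the weak-linking hypothesis gives $|a_i| \leq 1$ for each $i$.

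I would next use the three Hamilton cycles of $K_4$ to obtain further constraints. A direct calculation shows that each Hamilton cycle can be written as a $1$-chain of the form $T_i + T_j$ for some pair $\{i,j\}$ (and equivalently as $-(T_k + T_l)$ for the complementary pair, by the relation above). Weak linking therefore gives $|a_i + a_j| \leq 1$ for every pair $\{i,j\}$.

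The combinatorial conclusion is then immediate. If any two of the $a_i$ were both $+1$, their pair sum would have absolute value $2$, contradicting the bound; so at most one $a_i$ equals $+1$ and, similarly, at most one equals $-1$. Since $\sum a_i = 0$ and $C$ links some cycle of $K_4$, the $a_i$ are not all zero, so exactly one equals $+1$, one equals $-1$, and the other two equal $0$.

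Finally, to produce the fan: the two nonzero triangles $T_i, T_j$ share a unique edge $e$, since any two faces of the tetrahedron meet in an edge. Taking the endpoints of $e$ as the apex $p$ and axis partner $q$, and the remaining two vertices as $I = \{r_1, r_2\}$, the unoriented triangles of the fan $p|q|r_1r_2$ are precisely $T_i$ and $T_j$, while the remaining two triangles of $K_4$ form the complementary pair meeting at the opposite edge. The main obstacle is the orientation check: the outward-normal convention makes $T_i$ and $T_j$ induce opposite orientations on $e$, whereas the fan convention requires them to agree, so one triangle (the one with $a_\bullet = -1$) must be reversed. After this flip both fan triangles link $C$ with $+1$, while the two remaining triangles of $K_4$ link $C$ with $0$; hence $C$ links $K_4$ in the fan $p|q|r_1r_2$.
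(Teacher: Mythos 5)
Your proof is correct and takes essentially the same approach as the paper's: both decompose the four oriented face triangles of $K_4$ as a $1$-chain summing to zero, observe that each pairwise sum of faces is a $4$-cycle so that two equal nonzero linking numbers would force a strong link, and conclude that exactly one triangle links $C$ with $+1$, one with $-1$, and two with $0$, which is precisely a fan. Your explicit orientation check on the shared edge at the end plays the same role as the paper's ``after relabelling the vertices and reorienting $C$ (if necessary).''
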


\begin{proof}
Let $K_4=\langle v_0,v_1,v_2,v_3\rangle$, and let
\begin{align*}
C_0 &= v_1v_2v_3, & C_1 &= v_3v_2v_0, \\
C_2 &= v_0v_1v_3, & C_3 &= v_2v_1v_0.
\end{align*}
Then as $1$-chains in $K_4$ we have
\[
C_0+C_1+C_2+C_3 = 0,
\]
and for $i\neq j$ the sum $C_i+C_j$ is a 4-cycle in $K_4$. 

In the homology group $H_1(\real^3-C)$ we have
\[
[C_0]+[C_1]+[C_2]+[C_3] = 0,
\]
with each $[C_i]\in\{0,\pm1\}$  and some $[C_i]\not=0$. If there exist $i\neq j$ such that $[C_i]=[C_j]\neq0$, then $[C_i+C_j]=2[C_i]\neq0$, and $C$ strongly links the four cycle $C_i+C_j$, contrary to hypothesis. So it must be the case that one term is equal to $+1$, one term is equal to $-1$, and the other two are zero. After relabelling the vertices and reorienting $C$ (if necessary), we may assume that 
\begin{align*}
[C_0]&=[C_1]=0, & [C_2]&=-[C_3]=1. 
\end{align*}
Thus we let $O=\{v_1\}$, $I=\{v_2,v_3\}$, and see that $C$ links $K_4$ in the fan $v_0\{v_1\}I$. 
\end{proof}

\begin{lem}
\label{lem:K5case}
Let $C$ be an oriented simple closed curve which weakly links an embedding of $K_5$ in $\mathbb{R}^3$.  Then $C$ links $K_5$ in a star. 
\end{lem}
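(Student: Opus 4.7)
My plan is to apply Lemma~\ref{lem:K4case} to each of the five $K_4$ subgraphs of $K_5$ and combine this with the relation $[C_0]+[C_1]+[C_2]+[C_3]=0$ in $H_1(\real^3-C)$ that holds within every $K_4$, as used in the proof of Lemma~\ref{lem:K4case}. Since $C$ links some cycle of $K_5$, some triangle $T_0$ of $K_5$ satisfies $\link{C}{T_0}=\pm 1$; after relabeling the vertices of $K_5$ and, if necessary, reversing the orientation of $C$, I may take $T_0=v_0v_1v_2$ with $\link{C}{T_0}=+1$. The $K_4$ subgraphs on $\langle v_0,v_1,v_2,v_3\rangle$ and $\langle v_0,v_1,v_2,v_4\rangle$ each contain $T_0$ and so are weakly linked to $C$; by Lemma~\ref{lem:K4case} each is linked in a fan whose axis must be an edge of $T_0$. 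A further relabeling within $\{v_0,v_1,v_2\}$ lets me take the axis of the first fan to be $v_0v_1$, pinning down $\link{C}{v_0v_1v_3}=+1$ and $\link{C}{v_0v_2v_3}=\link{C}{v_1v_2v_3}=0$.

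I then split into cases on the axis of the second fan, which is one of $v_0v_1$, $v_1v_2$, $v_0v_2$. When the second axis is also $v_0v_1$, the $K_4$ on $\langle v_0,v_1,v_3,v_4\rangle$ inherits two $+1$ triangles meeting in the edge $v_0v_1$ and so (by Lemma~\ref{lem:K4case}) has axis $v_0v_1$; the $K_4$ boundary relation then propagates zeros to the remaining triangles of $K_5$, giving the improper star $v_0\{v_1\}\{v_2,v_3,v_4\}$. The remaining two axis choices are related by the symmetry $v_0\leftrightarrow v_1$, so only one need be treated. In that different-axis case I apply Lemma~\ref{lem:K4case} to each of the three remaining $K_4$s---using the already-determined linking-number signs to pin down each candidate fan's orientation---and then require the choices to be mutually consistent. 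This leaves exactly two sub-options. In one of them the nonzero-linking triangles of $K_5$ all contain a common vertex and the linking pattern is recognizable as the proper star $v_0\{v_1,v_4\}\{v_2,v_3\}$ (up to the symmetry); in the other sub-option I exhibit an explicit 5-cycle of $K_5$---for example $v_0v_1v_3v_4v_2$---whose linking number with $C$ equals $\pm 2$, contradicting the weak-linking hypothesis and ruling the sub-option out.

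The main obstacle is the orientation bookkeeping in the different-axis case. The axis of a fan is an unordered edge, but the orientation of each candidate fan---i.e., whether it or its reverse is the actual fan---is constrained by the signs of the triangles whose linking numbers have already been determined. Carrying these sign constraints through the three remaining $K_4$s, and then selecting a 5-cycle that explicitly witnesses $|\link{C}{\cdot}|=2$ in the bad sub-option, requires care. Once the enumeration is organized carefully the number of live sub-cases is small, and in each surviving sub-option the star structure emerges directly from the support of the nonzero-linking triangles.
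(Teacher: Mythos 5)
Your proposal is correct: I checked the key computations and they hold. With the first fan axis $v_0v_1$ (so $[v_0v_1v_2]=[v_0v_1v_3]=+1$) and second axis $v_0v_2$ (so $[v_0v_4v_2]=+1$), consistency of Lemma~\ref{lem:K4case} across the three remaining $K_4$ subgraphs does leave exactly two sub-options: either $[v_0v_3v_4]=-1$ with $[v_1v_3v_4]=[v_2v_3v_4]=0$, which is precisely the proper star $v_0\{v_1,v_4\}\{v_2,v_3\}$, or $[v_1v_3v_4]=[v_2v_3v_4]=+1$ with $[v_0v_3v_4]=0$, in which case your $5$-cycle works: $[v_0v_1v_3v_4v_2]=[v_0v_1v_3]+[v_0v_3v_4]+[v_0v_4v_2]=1+0+1=2$. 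Your route is genuinely different in organization from the paper's. The paper splits on whether some \emph{edge} is common to all triangles linking $C$: in the common-edge case it gets the fan from two applications of Lemma~\ref{lem:K4case}, and in the no-common-edge case it picks two linked triangles meeting only at $p$ and uses targeted chain computations (including a $5$-cycle $pr_iq_ir_{i+1}q_{i+1}$, used \emph{constructively} to force $[pq_ir_{i+1}]=+1$) to build the proper star directly. You instead exhaustively enumerate fan axes over all five $K_4$ subgraphs and impose mutual consistency, using your $5$-cycle \emph{destructively} to kill the one spurious pattern. Both proofs rest on the same two tools --- Lemma~\ref{lem:K4case} and decomposition of cycles into triangles in $H_1(\real^3-C)$ --- but your enumeration surfaces an instructive fact the paper's argument leaves implicit: the bad pattern is consistent with every triangle and $4$-cycle constraint (each $K_4$ links $C$ in a bona fide fan), so a $5$-cycle obstruction is genuinely unavoidable. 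The trade-off is that your finite check is feasible only because $K_5$ has just five $K_4$ subgraphs, whereas the paper's common-edge dichotomy produces the star structurally and meshes with how stars are exploited in the later sections; your write-up also defers the sign bookkeeping in the different-axis case to ``care,'' which is fine as a plan but is exactly where the enumeration must be carried out in full to count as a complete proof.
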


\begin{proof} 
Since any cycle that links $C$ can be broken into triangles, there must be at least one triangle in $K_5$ that links $C$.  First we suppose that there is some edge $pq$ common to all triangles which link $C$.  
Let $K_5=\langle p,q,r_0,r_1,r_2\rangle$, and assume without loss of generality that $\link{C}{pqr_0}=+1$. We claim that $C$ links $K_5$ in the star $p|q|r_0r_1r_2$. 

To see this, we apply Lemma~\ref{lem:K4case} to $K_4=\langle p,q,r_0,r_i\rangle$ for $i=1,2$. By Lemma~\ref{lem:K4case}, $C$ links $K_4$ in a star.  This star must be the fan with axis $pq$ because we know $\link{C}{pqr_0}=+1$ and $pq$ is common to all triangles in $K_5$ linking $C$. Therefore for $i=1,2$, the triangle $pqr_i$ in $K_4=\langle p,q,r_0,r_i\rangle$ links $C$ with linking number $+1$. Thus every triangle in the star $p|q|r_0r_1r_2$ in $K_5$ links $C$ positively; every triangle in the star $q|p|r_0r_1r_2$ links $C$ negatively; and since every triangle that links $C$ contains $pq$, no other triangle can link $C$. It follows that $C$ links $K_5$ in the star $p|q|r_0r_1r_2$ as claimed, completing the proof in this case.

Suppose now that there is no edge of $K_5$ common to all triangles linking $C$. Since any two triangles in $K_5$ must share at least one vertex, this implies there exist triangles $T_0=pq_0r_0$ and $T_1=pq_1r_1$ such that $T_0\cap T_1=\{p\}$ and $\link{C}{T_0}=\link{C}{T_1}=+1$. We show that $C$ links $K_5$ in the star $\Sigma=p|q_0q_1|r_0r_1$.

In what follows homology classes are taken with respect to $H_1(\real^3-C)$, and subscripts are taken modulo 2. We begin by showing that the two remaining triangles $pq_0r_1$ and $pq_1r_0$ of $\Sigma$ link $C$ with linking number $+1$. To see that $[pq_ir_{i+1}]=+1$ for $i=0,1$ consider the 5-cycle $D=pr_iq_ir_{i+1}q_{i+1}$. We have
\begin{align*}
[D]=[pr_iq_ir_{i+1}q_{i+1}]&=[pr_iq_i]+[pq_ir_{i+1}]+[pr_{i+1}q_{i+1}] \\
                           &= [pq_ir_{i+1}]-2,
\end{align*}
so we must have $[pq_ir_{i+1}]=+1$ because otherwise either $D$ or $pq_ir_{i+1}$ would strongly link $C$. 

We next show that $[pq_0q_1]=[pr_0r_1]=0$. Recall that $q_i$ and $r_i$ were chosen so that $[pq_0r_0]=+1$ and $[pq_1r_1]=+1$.  Suppose that $[pq_iq_{i+1}]=+1$ for some $i\in\{0,1\}$. Then letting $D=pq_iq_{i+1}r_{i+1}$ we have
\begin{align*}
[D]=[pq_iq_{i+1}r_{i+1}]&=[pq_iq_{i+1}]+[pq_{i+1}r_{i+1}]=+2.
\end{align*}
Similarly, if $[pr_ir_{i+1}]=+1$ for some $i$, then letting $D=pq_ir_ir_{i+1}$ we have
\begin{align*}
[D]=[pq_ir_ir_{i+1}]&=[pq_ir_{i}]+[pr_{i}r_{i+1}]=+2.
\end{align*}
In either case, some cycle in $K_5$ would strongly link $C$, contrary to hypothesis. Since $i$ can be either $0$ or $1$, we must have
$[pq_0q_1]=[pr_0r_1]=0$. 

To complete the proof that $C$ links $K_5$ in the star $\Sigma=p|q_0q_1|r_0r_1$, it remains to show that $C$ links no triangle in $K_4=\langle q_0,q_1,r_0,r_1\rangle$. Suppose to the contrary that it does. Then it must positively link some triangle of the form $q_iq_{i+1}r_j$ or $q_ir_jr_{j+1}$. In the first case, letting $D=q_iq_{i+1}r_jp$ we have
\[
[D]=[q_iq_{i+1}r_jp] = [q_iq_{i+1}r_j]+[q_{i+1}r_jp]=+2;
\]
and in the second, letting $D=q_ir_jr_{j+1}p$ we similarly get
\[
[D] = [q_ir_jr_{j+1}p] = [q_ir_jr_{j+1}]+[q_ir_{j+1}p]=+2.
\]
In either case some cycle in $K_5$ strongly links $C$, contrary to hypothesis. So no triangle in $K_4=\langle q_0,q_1,r_0,r_1\rangle$ can link $C$, and we conclude that $C$ links $K_5$ in the star  $\Sigma=p|q_0q_1|r_0r_1$. This completes the proof.
\end{proof}

\begin{lem}
\label{lem:Kmgeq6} 
Let $n\geq6$, and 
let $C$ be an oriented simple closed curve which weakly links an embedding of $K_n$ in $\mathbb{R}^3$.  Then $C$ links $K_n$ in a star. 
\end{lem}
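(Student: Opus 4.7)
The plan is to prove this by induction on $n$, using Lemma~\ref{lem:K5case} as the base case.  For the inductive step, suppose $n \geq 6$ and $C$ weakly links $K_n$.  Since $C$ links some triangle $T$ of $K_n$ and $n > 3$, there is a vertex $v \in V(K_n) \setminus V(T)$, so $T \subseteq K_n \setminus v$ and $C$ weakly links $K_n \setminus v \cong K_{n-1}$.  By the inductive hypothesis, $C$ links $K_n \setminus v$ in a star $p|O|I$, and the task reduces to extending this to a star on $K_n$ by placing $v$ into $O$ or into $I$.

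If $p|O|I$ is a fan with axis $pq$, I pick two vertices $r, s$ on the opposite side of the axis (at least two exist since $n - 1 \geq 5$) and apply Lemma~\ref{lem:K5case} to the $K_5$ on $\{p, q, r, s, v\}$.  The inherited linked triangles $pqr$ and $pqs$ share edge $pq$, so Case~1 in the proof of Lemma~\ref{lem:K5case} forces this $K_5$ to be linked in the fan with axis $pq$.  This pins down $\link{C}{pqv}$ with the correct sign and forces every other triangle through $v$ in this $K_5$ to be unlinked with $C$; varying $r, s$ extends the fan on $K_n \setminus v$ to the fan $p|\{q\}|I \cup \{v\}$ on $K_n$.

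If $p|O|I$ is proper then $|O|, |I| \geq 2$.  Pick $q_0, q_1 \in O$ and $r_0, r_1 \in I$ and apply Lemma~\ref{lem:K5case} to the $K_5$ on $\{p, q_0, q_1, r_0, v\}$.  The inherited linked triangles $pq_0r_0$ and $pq_1r_0$ share edge $pr_0$, forcing the $K_5$'s star to have apex $p$ or $r_0$; enumeration gives exactly three possibilities --- the fan $p|\{r_0\}|\{q_0, q_1, v\}$, the proper star $p|\{q_0, q_1\}|\{r_0, v\}$, and the proper star $r_0|\{p, v\}|\{q_0, q_1\}$ --- yielding triples $(\link{C}{pq_0v}, \link{C}{pr_0v}, \link{C}{q_0r_0v})$ equal to $(0, -1, 0)$, $(+1, 0, 0)$, and $(0, 0, +1)$ respectively.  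The first two are exactly the predictions of the putative extensions $p|O\cup\{v\}|I$ and $p|O|I\cup\{v\}$ restricted to this $K_5$.  The third option is impossible: under it the star on the $K_5$ forces $\link{C}{pq_1v} = \link{C}{pr_0v} = 0$, so triangulating the 6-cycle $p q_0 r_0 v q_1 r_1$ from $p$ as $pq_0r_0 + pr_0v + pvq_1 + pq_1r_1$ gives a cycle whose linking number with $C$ is $+1 + 0 + 0 + 1 = +2$, contradicting the weak linking of $K_n$.

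To finish, I apply the same three-option analysis to further $K_5$'s obtained by varying $q_0, q_1, r_0, r_1$: the third option is likewise excluded in each, and the overlaps on shared triangles force the surviving triple to be the same across all such $K_5$'s, so that $v$ goes uniformly into one of $O$ or $I$; the same overlaps also force $\link{C}{vab} = 0$ for every triangle $vab$ with $a, b \in V(K_n) \setminus \{p, v\}$.  The main obstacle is the case analysis of the three $K_5$ options and the elimination of the third via the strong-linking 6-cycle; once that is handled, the extension to a global star on $K_n$ is essentially forced.
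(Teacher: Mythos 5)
Your proper-star case is essentially sound: the three-option enumeration for the $K_5$ on $\langle p,q_0,q_1,r_0,v\rangle$ is correct (the apex must be $p$ or $r_0$, common to the inherited triangles $pq_0r_0$ and $pq_1r_0$), and the exclusion of the apex-$r_0$ option by exhibiting the strongly linking $6$-cycle $pq_0r_0vq_1r_1$, with $\link{C}{pq_0r_0vq_1r_1}=+2$, is a valid and rather elegant use of the hypothesis $n\geq 6$. The genuine gap is in the fan case. There you invoke ``Case~1'' of the proof of Lemma~\ref{lem:K5case}, but that case's hypothesis --- an edge common to \emph{all} triangles of the $K_5$ linking $C$ --- is not available to you: all you know is that the two inherited triangles $pqr$ and $pqs$ link $C$, and nothing yet controls the triangles through $v$. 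Concretely, the stars on $\langle p,q,r,s,v\rangle$ containing $+pqr$ and $+pqs$ are three, not one: the fan $p|\{q\}|\{r,s,v\}$, the proper star $p|\{q,v\}|\{r,s\}$, and the proper star $q|\{r,s\}|\{p,v\}$; all three restrict to the fan with axis $pq$ on $\langle p,q,r,s\rangle$. The latter two actually occur: take an embedding in which $C$ links $K_n$ in the proper star $p|\{q,v\}|I$ (weakly linked by Lemma~\ref{lem:star-implies-weak}); deleting $v$ yields exactly the fan $p|\{q\}|I$ on $K_{n-1}$, yet $\link{C}{pqv}=0$ and $\link{C}{pvr}=+1$ for $r\in I$, contradicting the conclusion your fan case ``forces.'' So that step is false, not merely under-justified, and --- unlike the proper case --- no option can be eliminated in the fan case, since all three global extensions ($v$ into $I$; $v$ into $O$, making $p|\{q,v\}|I$; or the rewriting $q|I|\{p,v\}$) are realisable. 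The fan case therefore requires the same uniformity-propagation argument across the $K_5$'s $\langle p,q,r,s,v\rangle$ as $r,s$ vary, but now with three surviving candidates to reconcile rather than two.

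Two lesser points. First, your final propagation step is only sketched, and the family of $K_5$'s you name (two vertices of $O$, one of $I$) never contains a triangle $vrr'$ with $r,r'\in I$, so to force $\link{C}{vrr'}=0$ you also need the mirror-image $K_5$'s $\langle p,q_0,r_0,r_1,v\rangle$ with inherited triangles sharing the edge $pq_0$; this is fixable by the symmetry that reverses orientations and swaps $O$ with $I$, but it should be said. Second, a notational slip: the fan option should be written $p|\{q_0,q_1,v\}|\{r_0\}$, since $p|\{r_0\}|\{q_0,q_1,v\}$ contains $+pr_0q_0$ rather than $+pq_0r_0$; your triple $(0,-1,0)$ is the correct one for the intended star. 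For comparison, the paper avoids the delicate extension problem entirely: it first proves directly (via Lemma~\ref{lem:tripleimpliesstrong} and the pairwise-intersecting-triangles configuration) that a single vertex $p$ lies on every triangle linking $C$, then defines $O$ and $I$ globally by linking signs and verifies the star structure, so it never has to reconcile local stars on overlapping $K_5$'s.
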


To prove Lemma~\ref{lem:Kmgeq6} we will use Lemma~\ref{lem:tripleimpliesstrong}, which is a special case of a lemma proved in Flapan~\cite{flapan2002}.

\begin{lem}[Triple link implies strong link]
\label{lem:tripleimpliesstrong}
Let $L\cup Z\cup W$ be a $3$-component link in $\real^3$, such that $Z$ and $W$ are cycles belonging to an embedding of $K_n$ in $\real^3$. Suppose that $\link{L}{Z}\neq0\neq\link{L}{W}$. Then $K_n$ contains a cycle which strongly links $L$. 
\end{lem}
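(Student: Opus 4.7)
The plan is to build a simple cycle in $K_n$ with absolute linking number at least $2$ with $L$ by splicing $Z$ and $W$ together, and to fall back on Lemma~\ref{lem:K4case} if the naive construction is insufficient. First I would reduce to the case where $Z$ and $W$ are triangles. Decomposing $Z$ as a sum of triangles sharing a common vertex (as in the proof of Lemma~\ref{lem:star-implies-weak}) shows that, since $\link{L}{Z}\neq 0$, some triangle $T_Z$ on the vertices of $Z$ satisfies $\link{L}{T_Z}\neq 0$; similarly obtain a triangle $T_W$ on the vertices of $W$. The triangles $T_Z$ and $T_W$ are vertex-disjoint because $Z$ and $W$ are disjoint embedded curves and therefore share no vertices of $K_n$. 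If $|\link{L}{T_Z}|\geq 2$ or $|\link{L}{T_W}|\geq 2$, that triangle already strongly links $L$, so I may reorient if necessary to assume $\link{L}{T_Z}=\link{L}{T_W}=+1$.

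The key construction splices the triangles $T_Z=z_0z_1z_2$ and $T_W=w_0w_1w_2$ into the simple $6$-cycle
\[ C \;=\; z_0z_1z_2w_0w_1w_2 \]
in $K_n$. A direct chain-level computation shows that $[C]=[T_Z]+[T_W]+[T^*]$ in $H_1(\real^3\setminus L)$, where $T^*$ is the $4$-cycle $z_0\to z_2\to w_0\to w_2\to z_0$ in $K_n$; hence $\link{L}{C}=2+\link{L}{T^*}$. If $\link{L}{T^*}\geq 0$ then $C$ strongly links $L$; if $\link{L}{T^*}\leq -2$ then $T^*$ does; so the only residual case is $\link{L}{T^*}=-1$.

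For this residual case I would repeat the splice using the various cyclic rotations of $T_Z$ and $T_W$, and with $T_W$ traversed in reverse, producing further simple $6$-cycles $C^\sigma$ with accompanying $4$-cycles $T^{*\sigma}\subseteq K_n$ satisfying $\link{L}{C^\sigma}=\pm 2+\link{L}{T^{*\sigma}}$ or $\link{L}{C^\sigma}=\link{L}{T^{*\sigma}}$. Either some $T^{*\sigma}$ has $|\link{L}{T^{*\sigma}}|\geq 2$ (and itself strongly links $L$), or some $\link{L}{T^{*\sigma}}$ takes a value that makes $|\link{L}{C^\sigma}|\geq 2$ (so $C^\sigma$ strongly links $L$), or every relevant $T^{*\sigma}$ links $L$ with linking number exactly $-1$. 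In this last case I apply Lemma~\ref{lem:K4case} to each $K_4=\langle z_a,z_b,w_c,w_d\rangle$ spanned by a $T^{*\sigma}$: $L$ must link that $K_4$ in a fan whose axis is one of the four edges of $T^{*\sigma}$.

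The main obstacle is showing that the fan structures forced on these various $K_4$'s cannot all coexist with $\link{L}{T_Z}=\link{L}{T_W}=+1$. Carrying out this combinatorial case analysis is precisely the content of the more general lemma of Flapan~\cite{flapan2002} of which our statement is a special case.
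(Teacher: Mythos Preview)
Your chain-level computation $[C]=[T_Z]+[T_W]+[T^*]$ is correct, and the reduction to disjoint triangles with linking number $+1$ is fine. However, the proposal is not a complete proof: in the residual case $\link{L}{T^*}=-1$ you outline a program of trying alternate splicings and invoking Lemma~\ref{lem:K4case} on the resulting $K_4$'s, but you do not actually carry out the promised ``combinatorial case analysis'' showing the fan structures are inconsistent. Instead you close by saying this analysis ``is precisely the content of the more general lemma of Flapan~\cite{flapan2002}.'' That leaves the argument resting on exactly the external result the paper cites, with several paragraphs of setup that are then superseded by the citation.

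By comparison, the paper's own proof is a single paragraph: it observes that in \cite[Lemma~1]{flapan2002} the hypothesis that $L$ lies in $K_n$ is never used, reduces to $\link{L}{Z}=\link{L}{W}=1$, and then invokes that lemma directly on $Z$ and $W$ (without first reducing to triangles) to produce a cycle $J$ with $\link{L}{J}\geq 2$. Your splice construction is in fact the opening move of Flapan's own argument, so you are partially reconstructing the cited proof before deferring to it anyway. If you want a self-contained proof you must finish the residual case; otherwise, drop the splice machinery and cite \cite[Lemma~1]{flapan2002} directly, noting as the paper does that the hypothesis $L\subseteq K_n$ plays no role.
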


\begin{proof}
In Lemma~1 of ~\cite{flapan2002} the component $L$ is also assumed to be a cycle belonging to $K_n$, but this hypothesis plays no role in the proof and can be omitted. If either $Z$ or $W$ strongly links $L$ then we are done. Otherwise, we may orient $Z$ and $W$ such that $\link{L}{Z}=\link{L}{W}=1$, and apply \cite[Lemma 1]{flapan2002} to obtain a cycle $J$ in $K_n$ with at least $6$ vertices such that for some orientation of $J$, we have $\link{L}{J}\geq 2$.
\end{proof}

\begin{proof}[Proof of Lemma~\ref{lem:Kmgeq6}]  
Since $C$ links some cycle of $K_n$, it links some triangle.  Since $n>5$, this triangle lies in a $K_5$ subgraph which links $C$.  Thus by Lemma~\ref{lem:K5case}, $C$ links a star in this $K_5$.  It follows that $C$ links at least three triangles in $K_n$.  If there are two disjoint triangles in $K_n$ which link $C$, then by Lemma~\ref{lem:tripleimpliesstrong} $C$ strongly links some cycle of $K_n$.  So we assume that no pair of triangles that link $C$ are disjoint.

Now we show that there is a vertex $p$ in $K_n$ such that every triangle in $K_n$ that links $C$ contains $p$.  Suppose this is not the case.  Since no pair of triangles that link $C$ are disjoint, there must exist three triangles  linking $C$ which pairwise intersect but don't all share a common vertex.  We know by Lemma~\ref{lem:K5case} that the set of triangles in a $K_5$ which link $C$ must all share at least one common vertex.  Thus the three triangles which pairwise intersect but don't have a common vertex must use at least $6$ vertices.  If any pair of them shared a common edge, it would only require $5$ vertices; and if they used more than $6$ vertices, there would be a pair that did not share a vertex.  Thus we have the situation illustrated in Figure~\ref{fig:pairwiseintersect}, with $C$ linking the triangles $p_0p_1q_2$, $p_0q_1p_2$, and $q_0p_1p_2$.

\begin{figure}[t]
\begin{center}
\setlength{\hgap}{1.25cm}
\begin{tikzpicture}[vertex/.style={circle,draw,minimum size = 2mm,inner sep=0pt},thick,scale=0.5]
\foreach \j/\p/\q in {0/below/above,1/right/left,2/left/right}
  {\node (x\j) at (-90+\j*120:\hgap) [vertex] {};
   \node [\p] at (x\j) {$p_\j$};
   \node (y\j) at (90+\j*120:2*\hgap) [vertex] {};
   \node [\q] at (y\j) {$q_\j$};}
\foreach \j/\k in {0/1,1/2,2/0}
   {\draw (y\j) -- (x\k);
    \draw (x\j) -- (y\k);
    \draw (x\j) -- (x\k);}
\end{tikzpicture}
\caption{Three triangles $p_0p_1q_2$, $p_0q_1p_2$, $q_0p_1p_2$ that pairwise intersect but share no common vertex.}
\label{fig:pairwiseintersect}
\end{center}
\end{figure}
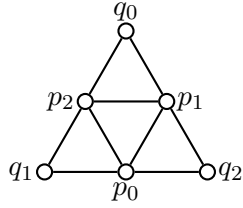

 Suppose without loss of generality that $\link{C}{p_0p_1q_2}=+1$ (re-orienting $C$, if necessary), and consider the $K_5$-subgraph $H=\langle p_0,p_1,p_2,q_1,q_2\rangle$. Since $p_0p_1q_2$ and $p_0q_1p_2$ both link $C$ and are contained in $H$, $C$ must link $H$ in a proper star with apex $p_0$.  This star must be either $p_0|p_1q_1|p_2q_2$ (if $\link{C}{p_0q_1p_2}=+1$), or $p_0|p_1p_2|q_1q_2$ (if $\link{C}{p_0q_1p_2}=-1$). Hence either $p_0p_1p_2$ and $p_0q_1q_2$ both link $C$, or $p_0p_1q_1$ and $p_0p_2q_2$ both link $C$.

If $p_0p_1p_2$ and $p_0q_1q_2$ both link $C$, then the triangles $p_0q_1q_2$ and $q_0p_1p_2$ in $K_n$ would be disjoint triangles which both link $C$.  Hence by Lemma~\ref{lem:tripleimpliesstrong}, there would be a cycle in $K_n$ strongly linking $C$. As this is contrary to hypothesis, we must have both
$p_0p_1q_1$ and $p_0p_2q_2$ linking $C$ instead. Then in the $K_5$-subgraph $H'=\langle p_0,p_1,p_2,q_0,q_1\rangle$ we have at least the three triangles $p_0q_1p_2$, $q_0p_1p_2$, and  $p_0p_1q_1$ linking $C$.  But this is impossible by Lemma~\ref{lem:K5case} since $C$ must link $H'$ in a star, which means that there is a vertex common to all triangles in $H'$ which link $C$.  Thus, in fact, there must be some vertex $p$ in $K_n$ such that every triangle in $K_n$ that links $C$ contains $p$. 

We next show that every vertex $s$ of $K_n$ belongs to some triangle in $K_n$ linking $C$. Indeed, let $pqr$ be a triangle in $K_n$ linking $C$, and consider the $K_4$-subgraph $\langle p,q,r,s\rangle$. By Lemma~\ref{lem:K4case} this $K_4$-subgraph must link $C$ in a fan, so either $pqs$ or $prs$ must link $C$. 

We're now ready to partition the vertices of $K_n-\{p\}$ into sets $O$ and $I$ as required by the theorem. 
Let 
\begin{align*}
O &= \{q\in K_n:\text{$[pqr]=1$ for some $r\in K_n$}\}, \\
I &= \{r\in K_n:\text{$[pqr]=1$ for some $q\in K_n$}\},
\end{align*}
where homology classes are taken with respect to $H_1(\real^3-C)$.
By the previous paragraph every vertex of $K_n-\{p\}$ belongs to $O\cup I$. 
We claim that moreover $O\cap I=\emptyset$, so that $\{O,I\}$ is in fact a partition of the vertices of $K_n-\{p\}$.
To see this, suppose that $r\in O\cap I$. Then there are vertices $q$ and $s$ such that $[pqr]=[prs]=1$.  But this would imply that
\[
[pqrs]=[pqr]+[prs]=2,
\]
and hence the square $pqrs$ would strongly link $C$.  Thus $O\cap I=\emptyset$.

 Given $q\in O$ and $r\in I$, we must show that $C$ links $pqr$.  Now by definition of $O$ and $I$, there are vertices $s\in I$ and $t\in O$ such that $[pqs]=[ptr]=1$. If $s=r$ or $t=q$, then $C$ does link $pqr$ as required.  Otherwise $p,q,r,s,t$ are all distinct so $H=\langle p,q,r,s,t\rangle$ is a $K_5$-subgraph. By  Lemma~\ref{lem:K5case}, $C$ must link $H$ in the star $p|qt|rs$, and so $[pqr]=1$, as required. 

Finally, to show that $C$ only links triangles of the form $pqr$ with $q\in O$ and $r\in I$, we consider triangles in $K_n$ which are not of this form. Since $\bigl\{\{p\},O,I\bigr\}$ partitions the vertices of $K_n$, such a triangle must have one of the following forms:
\begin{itemize}
\item
$pq_1q_2$ with $q_1,q_2\in O$, which cannot link $C$ since that would imply that $q_1$ or $q_2$ belongs to $O\cap I$.

\item
$pr_1r_2$ with $r_1,r_2\in I$, which cannot link $C$ since that would imply that $r_1$ or $r_2$ belongs to $O\cap I$.

\item
$xyz$ with $p\notin\{x,y,z\}$, which cannot link $C$ because we showed that every triangle that links $C$ contains $p$.
\end{itemize}
So we are done.
\end{proof}

Taken together, Lemmas~\ref{lem:K4case},~\ref{lem:K5case}, and~\ref{lem:Kmgeq6} complete the proof of Theorem~\ref{thm:star}.  By Lemma~\ref{lem:tripleimpliesstrong}, if a cycle $C$ weakly links a complete graph $K_n$ then any two cycles in $K_n$ that link $C$ must intersect. We extend this result 
to a pair of weakly linked complete graphs as follows.

\begin{thm}
\label{thm:notdisjoint}
Let $m,n\geq 3$, and suppose that $G\cong K_m$ and $H\cong K_n$ are weakly linked graphs in $\real^3$. If $C_1$ and $C_2$ are cycles in $G$ that link $H$, then there is a vertex $p$ of $G$ that belongs to both $C_1$ and $C_2$. 
\end{thm}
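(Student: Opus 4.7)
The plan is to argue by contradiction: suppose $C_1$ and $C_2$ share no vertex, so they are disjoint cycles in $G$. The strategy is to produce a single cycle $D$ in $H$ linked by both $C_1$ and $C_2$; then $D \cup C_1 \cup C_2$ is a $3$-component link with $\link{D}{C_i}\neq 0$ for $i=1,2$, and Lemma~\ref{lem:tripleimpliesstrong}, applied with $G\cong K_m$ playing the role of $K_n$ and $D$ playing the role of $L$, yields a cycle in $G$ strongly linking $D \subseteq H$, contradicting the weak linking of $G$ and $H$.

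To build $D$, I first note that since $G$ and $H$ are weakly linked, neither $C_i$ strongly links any cycle of $H$; combined with the hypothesis that $C_i$ links $H$, Theorem~\ref{thm:star} gives that $C_i$ links $H$ in a star $\Sigma_i=p_iO_iI_i$ with apex $p_i\in V(H)$. Reorienting $C_i$ swaps $O_i$ with $I_i$, and I would use this freedom below. If $p_1=p_2=p$, then $\{O_1,I_1\}$ and $\{O_2,I_2\}$ are two partitions of $V(H)-\{p\}$ into nonempty parts, and a brief inspection of the four cells $O_1\cap O_2$, $O_1\cap I_2$, $I_1\cap O_2$, $I_1\cap I_2$ always produces a pair $\{x,y\}$ crossing both partitions: if a diagonal pair of cells is nonempty take one vertex from each, and otherwise some empty cell forces an inclusion such as $O_1\subseteq I_2$, which again produces a crossing pair. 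The triangle $D=\{p,x,y\}$ then lies in both stars, so is linked by both $C_1$ and $C_2$.

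If $p_1\neq p_2$, I would reorient the $C_i$ so that $p_2\in I_1$ and $p_1\in I_2$. When $O_1\cap O_2\neq\emptyset$, any $y$ in the intersection makes $D=\{p_1,p_2,y\}$ a triangle in both stars. The main obstacle is the remaining subcase $O_1\cap O_2=\emptyset$: this forces $O_1\subseteq I_2$ and $O_2\subseteq I_1$, no triangle of $H$ lies in both stars, and one must build a longer cycle. Picking $q_i\in O_i$ gives four distinct vertices $p_1,q_1,p_2,q_2$, and I would take $D$ to be the $4$-cycle $p_1q_1p_2q_2$. Decomposing $D$ into triangles based at $p_1$ as $p_1q_1p_2+p_1p_2q_2$, the first triangle lies in $\Sigma_1$ (since $q_1\in O_1$ and $p_2\in I_1$) while the second has both non-apex vertices $p_2,q_2$ in $I_1$ and so contributes $0$, giving $\link{C_1}{D}=\pm 1$; a symmetric decomposition based at $p_2$ gives $\link{C_2}{D}=\pm 1$. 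The key insight is that the emptiness of $O_1\cap O_2$ places $O_1$ and $O_2$ in diagonally opposite cells of the two partitions, which is precisely what makes the $4$-cycle $p_1q_1p_2q_2$ pick up one nontrivial triangle contribution from each star.
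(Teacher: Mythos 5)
Your proposal is correct and follows essentially the same route as the paper: you reconstruct, inline, the paper's Lemma~\ref{lem:commonlinkingcycle} (using Theorem~\ref{thm:star} to get stars for $C_1$ and $C_2$, then producing a common linking triangle or, when the relevant intersection of out-sets is empty, the $4$-cycle $p_1q_1p_2q_2$ verified by the same triangle decomposition), differing only in the orientation normalisation ($p_2\in I_1$, $p_1\in I_2$ versus the paper's $q_2\in O_1$, $q_1\in I_2$). Your concluding step via Lemma~\ref{lem:tripleimpliesstrong} is one of the two finishes the paper itself explicitly offers for Theorem~\ref{thm:notdisjoint}, so there is no substantive divergence.
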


To prove Theorem~\ref{thm:notdisjoint}, we first prove the following lemma.

\begin{lem}
\label{lem:commonlinkingcycle} 
Let $C_1$ and $C_2$ be oriented simple closed curves which weakly link an embedding of $K_n$ in $\mathbb{R}^3$. Then there is a cycle of length at most $4$ in $K_n$ that links both $C_1$ and $C_2$.
\end{lem}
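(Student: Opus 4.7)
The plan is to apply Theorem~\ref{thm:star} so that each $C_i$ links $K_n$ in a star $\Sigma_i = p_i O_i I_i$, and then to build the desired short cycle explicitly from the combinatorics of the two stars. The argument splits on whether the apices coincide.

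\textbf{Case 1:} $p_1 = p_2 = p$. The sets $A = O_1 \cap O_2$, $B = O_1 \cap I_2$, $C = I_1 \cap O_2$, $D = I_1 \cap I_2$ partition $V(K_n) \setminus \{p\}$. A triangle $pqr$ lies in both stars whenever $q \in A$ and $r \in D$ (so that $q \in O_i$ and $r \in I_i$ for both $i$), and also whenever $q \in B$ and $r \in C$ (with opposite sign in $\Sigma_2$); in either situation $pqr$ links both $C_1$ and $C_2$. If neither option is available, then $A = \emptyset$ or $D = \emptyset$, and $B = \emptyset$ or $C = \emptyset$; the four resulting subcases each force one of $O_1, I_1, O_2, I_2$ to be empty, contradicting the existence of a triangle in the corresponding star.

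\textbf{Case 2:} $p_1 \neq p_2$. Reversing the orientation of $C_i$ if necessary swaps $O_i$ with $I_i$ without affecting which cycles link $C_i$, so we may assume $p_2 \in O_1$ and $p_1 \in O_2$. Define $A, B, C, D$ as above, now partitioning $V(K_n) \setminus \{p_1, p_2\}$. The plan is to examine 4-cycles of the form $p_1 x p_2 y$, decomposed into the triangles $p_1 x p_2$ and $p_1 p_2 y$ via the diagonal $p_1 p_2$. A direct sign computation using the star definition will show that $p_1 x p_2 y$ links $C_1$ if and only if exactly one of $x, y$ lies in $I_1$, and links $C_2$ if and only if exactly one lies in $I_2$. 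Hence any pair $(x,y) \in A \times D$ or $(x,y) \in B \times C$ produces a 4-cycle linking both curves.

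If no such pair exists, the failure analysis parallels Case 1 but the only non-contradictory surviving subcases are $O_1 = \{p_2\}$ and $O_2 = \{p_1\}$; in either, the corresponding $\Sigma_i$ is a fan with axis $p_1 p_2$, and the other $I_j$ is forced to lie inside $I_i = V(K_n) \setminus \{p_1, p_2\}$, so $I_1 \cap I_2$ is non-empty and the triangle $p_1 p_2 r$ for any $r \in I_1 \cap I_2$ lies in both stars. The main obstacle will be the sign bookkeeping in the 4-cycle computation of Case 2, but this is routine once the orientation conventions of the star definition are pinned down.
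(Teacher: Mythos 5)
Your proposal is correct and follows essentially the same route as the paper: both apply Theorem~\ref{thm:star} to obtain stars $p_iO_iI_i$, split on whether the apices coincide, and exhibit either a triangle through the apices or a $4$-cycle through both apices; your fourfold partition $O_1\cap O_2$, $O_1\cap I_2$, $I_1\cap O_2$, $I_1\cap I_2$, the criterion that $p_1xp_2y$ links $C_i$ if and only if exactly one of $x,y$ lies in $I_i$, and the fan fallback with the triangle $p_1p_2r$ all check out. The only differences are cosmetic: the paper orients so that $q_1\in I_2$ and $q_2\in O_1$ rather than mutually, and so finds a triangle $q_1q_2r$ in a subcase where your conventions produce a $4$-cycle, but under reversing $C_2$ the two case analyses and witnesses correspond exactly.
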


\begin{proof}
By Theorem~\ref{thm:star}, $C_1$ and $C_2$ each link $K_n$ in a star. Let $q_1O_1I_1$ be the star that links $C_1$, and let $q_2O_2I_2$ be the star that links $C_2$.  We show as follows that there is a cycle in $K_n$ that links both $C_1$ and $C_2$.

We first suppose that $q_1\not = q_2$.  Since $\bigl\{\{q_1\},O_1,I_1\bigr\}$ and $\bigl\{\{q_2\}, O_2, I_2\bigr\}$ each partition $K_n$, we can switch the orientations on $C_1$ and $C_2$ (if necessary) so that $q_1\in I_2$ and $q_2\in O_1$.  Hence for all $r\in I_1$, we have $\link{C_1}{rq_1q_2}=1$; and for every $r\in O_2$, we have $\link{C_2}{rq_1q_2}=1$.  If there is some $r\in I_1\cap O_2$, then the triangle $q_1q_2r$ links both $C_1$ and $C_2$.  

 So we assume that $I_1\cap O_2=\emptyset$.  Since 
$\bigl\{\{q_1\},O_1,I_1\bigr\}$ and 
$\bigl\{\{q_2\},O_2,I_2\bigr\}$ are partitions of $K_n$ with $q_1\in I_2$ and 
$q_2\in O_1$, it follows that we must have 
$O_2\subseteq O_1$ and 
 $I_1\subseteq I_2$.  
Let $x\in I_1\subseteq I_2$, $y\in O_2\subseteq O_1$, and consider the square
$xq_1q_2y$. 
In $H_1(\mathbb{R}-C_1)$ we have
\[
[xq_1q_2y]=[xq_1y]+[yq_1q_2]=1+0=1,
\]
because $q_2\in O_1$; and in $H_1(\mathbb{R}-C_2)$ we have 
\[
[xq_1q_2y]=[xq_1q_2]+[xq_2y]=0+1=1,
\]
because  $q_1\in I_2$. Thus $C_1$ and $C_2$ both link the square $xq_1q_2y$.  

Next suppose that $q_1=q_2$.  If $O_2\subseteq O_1$, then by analogy with our above argument $I_1\subseteq I_2$.  In this case, if $x\in O_2$ and $y\in I_1$, then $q_1xy$ links both $C_1$ and $C_2$.   Thus we assume that $O_2\not \subseteq O_1$, and similarly that $O_1\not \subseteq O_2$.  It follows that there is some vertex $x\in O_2\cap I_1$ and some vertex $y\in I_2\cap O_1$.  But now $xyq_1$ links both $C_1$ and $C_2$.  
\end{proof}

\begin{proof}[Proof of Theorem~\ref{thm:notdisjoint}]
By hypothesis $C_1$ and $C_2$ both weakly link $H$, so by  Lemma~\ref{lem:commonlinkingcycle} there is a cycle $D$ in $H$ which links both $C_1$ and $C_2$. 
Theorem~\ref{thm:notdisjoint} now follows from either
Lemma~\ref{lem:tripleimpliesstrong} or 
Theorem~\ref{thm:star} applied to the cycle $D$ and the complete graph $G$:
for instance, by Theorem~\ref{thm:star} $D$ links $G$ in a star $pOI$, and then $p$ must belong to $C_i$ for $i=1,2$, because otherwise $C_i$ does not link $D$.
\end{proof}

\section{Weak linking of a $\Theta$ curve with $K_n$}
\label{sec:theta}

\begin{thm}
\label{thm:thetacurve}
Let $\Theta$ be a theta curve with oriented cycles $C_1,C_2,C_3$ such that
\[
[C_1]+[C_2]+[C_3]=0
\]
in $H_1(\Theta)$. 
Let $n\geq3$, and suppose that $\Theta$ and $K_n$ are weakly linked graphs in $\real^3$. Then exactly one of the following cases holds:
\begin{enumerate}
\renewcommand{\labelenumi}{(A\arabic{enumi})}
\renewcommand{\theenumi}{A\arabic{enumi}}
\item
\label{case:commonapex}
There is a vertex $p$ of $K_n$ common to all triangles linking a curve in $\Theta$. Then there are pairwise disjoint sets $I_1,I_2,I_3$ (at most one empty) such that $I_1\cup I_2\cup I_3=K_n-\{p\}$, and after reversing the orientation of $\Theta$ (if necessary), each $C_i$ links the star $pO_iI_i$ in $K_n$, where
\begin{align*}
O_1 &= I_2\cup I_3, &
O_2 &= I_1\cup I_3, &
O_3 &= I_1\cup I_2.
\end{align*}
\item
\label{case:nocommonapex}
There is no vertex of $K_n$ common to all triangles linking a curve in $\Theta$. Then $n\geq 5$ and there are distinct vertices $p_1,p_2,p_3$ in $K_n$ such that, after reversing the orientation of $\Theta$ (if necessary), each $C_i$ links the star $p_iO_iI$ in $K_n$, where
\begin{align*}
O_1 &= \{p_2,p_3\}, &
O_2 &= \{p_1,p_3\}, &
O_3 &= \{p_1,p_2\},
\end{align*}
and $I=K_n-\{p_1,p_2,p_3\}$.
\end{enumerate}
\end{thm}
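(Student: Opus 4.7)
The plan is to apply Theorem~\ref{thm:star} to each $C_i$ individually and to exploit the homological relation $[C_1]+[C_2]+[C_3]=0$, which gives
\[
\link{C_1}{T}+\link{C_2}{T}+\link{C_3}{T}=0
\]
for every oriented triangle $T$ in $K_n$. Weak linking forces each summand to lie in $\{-1,0,1\}$, so the triple of linking numbers with any given $T$ is either $(0,0,0)$ or a permutation of $(1,-1,0)$, and at least two of the $C_i$ must link $K_n$ (a single linker would violate the sum on any triangle it links). For each linking $C_i$, Theorem~\ref{thm:star} produces a star $\Sigma_i=p_iO_iI_i$, which I encode by a sign function $\eps_i\colon V(K_n)\setminus\{p_i\}\to\{\pm1\}$ with $\eps_i(v)=+1$ on $O_i$ and $-1$ on $I_i$, so that $\link{C_i}{p_iqr}=\tfrac12(\eps_i(q)-\eps_i(r))$ for distinct $q,r\neq p_i$.

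For case (A1), assume a common vertex $p$ of all linking triangles. I first arrange each $\Sigma_i$ to have apex $p$: proper stars force $p_i=p$ since the apex is the unique common vertex of a proper star, improper stars are rewritten using $p_i\{q\}I_i=qI_i\{p_i\}$, and non-linking $C_i$ are represented trivially by $p(V\setminus\{p\})\emptyset$ (so $\eps_i\equiv+1$). Applying the sum condition to every triangle $pqr$ then shows that $\sigma(v):=\sum_i\eps_i(v)$ is constant on $V\setminus\{p\}$. The extreme values $\sigma=\pm3$ would force every $\Sigma_i$ triangle-free, contradicting weak linking, so $\sigma\in\{\pm1\}$; reversing the orientation of $\Theta$ (which negates every $\eps_i$ and hence $\sigma$) allows me to assume $\sigma=+1$. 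This expresses each $v\in V\setminus\{p\}$ as belonging to exactly one $I_i$ and to $O_j$ for the other two $j$, giving the partition $\{I_1,I_2,I_3\}$ with $O_i=I_j\cup I_k$; at most one $I_i$ can be empty, or every star would be triangle-free.

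For case (A2), no vertex is common to all linking triangles. All three $C_i$ must then link $K_n$: if some $C_j$ did not, the sum condition would force $C_i$ and $C_k$ to have identical linking triangles up to sign, so any of their apexes would be common, contradicting~(A2). The apexes $p_1,p_2,p_3$ must also be pairwise distinct, for if $p_i=p_j$, then for any triangle $T$ avoiding this vertex both $\link{C_i}{T}$ and $\link{C_j}{T}$ vanish by Theorem~\ref{thm:star}, forcing $\link{C_k}{T}=0$ and making $p_i$ common. Set $I:=V(K_n)\setminus\{p_1,p_2,p_3\}$ and $\delta_{i,j}:=\eps_i(p_j)$. The sum condition on triangles $p_iqr$ with $q,r\in I$---on which the other two linking numbers vanish---forces $\eps_i$ constant on $I$ with value $c_i$, while the sum condition on triangles $p_ip_jr$ with $r\in I$ yields the linear relation $\delta_{i,j}-\delta_{j,i}=c_i-c_j$. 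A case analysis on $(c_1,c_2,c_3)$ and the $\delta_{i,j}$ now rules out every configuration except $c_i\equiv-1$ (after reversing $\Theta$ if necessary) and $\delta_{i,j}\equiv+1$, since every other choice either renders some $\Sigma_i$ triangle-free (contradicting that $C_i$ links) or produces a vertex common to all linking triangles (contradicting~(A2)). The surviving configuration is exactly $\Sigma_i=p_i|p_jp_k|I$, and $n\geq5$ follows because $|I|\leq1$ would place a common vertex on every linking triangle. The main obstacle is this final case analysis, where one must methodically eliminate each non-canonical sign pattern either by exhibiting a degenerate star or by locating a common vertex forcing case~(A1).
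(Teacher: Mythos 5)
Your proposal is correct, and while it shares the paper's skeleton---the dichotomy on a common vertex, Theorem~\ref{thm:star} applied to each cycle, and the identity $\link{C_1}{T}+\link{C_2}{T}+\link{C_3}{T}=0$ (which the paper phrases as ``$D$ links exactly two of the $C_i$, one positively and one negatively'')---your implementation is genuinely different: you linearise everything by encoding each star through a sign function $\eps_i$ with $\link{C_i}{p_iqr}=\tfrac12\bigl(\eps_i(q)-\eps_i(r)\bigr)$. In Case~\eqref{case:commonapex} the paper refines $K_n-\{p\}$ into the four blocks $O_1\cap O_2$, $O_1\cap I_2$, $I_1\cap O_2$, $I_1\cap I_2$ and tabulates the linking triples (Table~\ref{tab:thetalinkingtriples}); your observation that $\sigma(v)=\sum_i\eps_i(v)$ is constant packages the same constraint more economically and reads off the partition $\{I_1,I_2,I_3\}$ at once, and your handling of fans via $p_i\{q\}I_i=qI_i\{p_i\}$ and of non-linking cycles via a degenerate star with $\eps_i\equiv+1$ is legitimate (the paper uses the same empty-set convention in the proof of Theorem~\ref{thm:K4Kn}). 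In Case~\eqref{case:nocommonapex} the divergence is larger: the paper chases specific triangles (choosing a linking triangle avoiding $p_2$, ruling out $p_3=r_1$, establishing the inclusions~\eqref{eq:O-inclusions} and squeezing them to equalities), whereas you reduce to finitely many signs $c_i$, $\delta_{i,j}$ subject to $\delta_{i,j}-\delta_{j,i}=c_i-c_j$. You assert the final enumeration rather than carry it out, but it does close by exactly the two mechanisms you name: if the $c_i$ are not all equal and $c_k$ is the minority value, the relations at the pairs $\{k,i\}$ and $\{k,j\}$ force $\delta_{k,i}=\delta_{k,j}=c_k$, so $\eps_k\equiv c_k$ is constant and $\Sigma_k$ is triangle-free, contradicting that all three cycles link; if all $c_i=-1$ (after reversing $\Theta$), each relation gives a symmetric pair sign $s_{ij}=\delta_{i,j}=\delta_{j,i}$, and then two negative pair signs meeting at one index empty that star, while a single negative sign $s_{ij}=-1$ makes $\Sigma_i$ and $\Sigma_j$ fans with axes $p_ip_k$ and $p_jp_k$ and forces $O_k=\{p_i,p_j\}$, so every linking triangle contains $p_k$, contradicting the hypothesis of~\eqref{case:nocommonapex}; only $s_{12}=s_{13}=s_{23}=+1$ survives, which is the claimed configuration. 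Your endgame for small $n$ is also sound: for $|I|=1$ the surviving configuration places the unique vertex of $I$ on every linking triangle, and for $n=3$ all three stars consist of $\pm p_1p_2p_3$, so in either case a common vertex appears and $n\geq5$ follows. As for what each approach buys: the paper's argument requires no extra notation and generates intermediate geometric facts such as the mutual-orientation relation~\eqref{eq:bothoutvertices} that echo later results like Theorem~\ref{thm:nocommonapex}, whereas yours is uniform and mechanical---both cases become small sign-constraint problems that can be checked exhaustively---at the modest cost of the bookkeeping for fan apexes and non-linking cycles, which you handle correctly.
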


Figure~\ref{fig:thetacurve} illustrates the two cases. The loops $C_1$, $C_2$ and $C_3$ link stars as given in the theorem. It then follows from Theorem~\ref{thm:star} that these embeddings are weakly linked because every cycle in $\Theta$ links a star in $K_n$. 

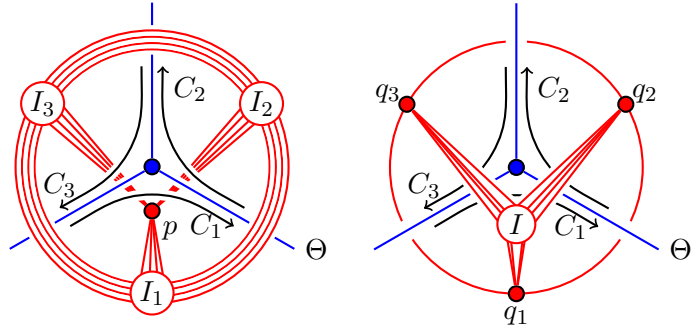
\begin{figure}
\begin{center}
\setlength{\radius}{2.1cm}
\begin{tabular}{cc}
\begin{tikzpicture}[vertex/.style={circle,draw,minimum size = 2mm,inner sep=0pt},thick,scale=0.8]
\draw [white] (90:1.3*\radius) -- (-90:1.32*\radius);
\node (b) at (0,0) [vertex,fill=blue] {};
\node (q) at (0,-0.35*\radius) [vertex,fill=red] {};
\node [below right] at (q) {$p$};
\foreach \i in {1,2,3}
   {\coordinate (b\i) at (90+\i*120:1.3*\radius);
    \path (b) ++ (96+\i*120:0.8*\radius) coordinate (s\i);
    \path (b) ++ (150+\i*120:0.22*\radius) coordinate (m\i);
    \path (b) ++ (204+\i*120:0.8*\radius) coordinate (e\i);
    \coordinate (p\i) at (150+\i*120:\radius);}
\foreach \i/\t in {1,2,3}    
   {\node (q\i) at (150+\i*120:\radius) {};}
\foreach \j/\m/\t in {0/1/15,1/1/{-10},2/1/-5,3/1/0}
    \path (q1) ++ (-14+\t+\j*90:\m*0.11*\radius) coordinate (q1\j);
\foreach \j/\m/\t in {0/1/40,1/1/{-10},2/1/0,3/1.1/10}
    \path (q2) ++ (-14+\t+\j*90:\m*0.11*\radius) coordinate (q2\j);
\foreach \j/\m/\t in {0/1/15,1/1/{-10},2/1/0,3/0.8/0}
    \path (q3) ++ (-34+\t+\j*90:\m*0.11*\radius) coordinate (q3\j);
\foreach \i in {1,2,3}
    {\foreach \j in {0,1,2,3}
        \draw [color=red] (q) -- (q\i\j);}
\foreach \i in {1,2,3}
   {\draw [color=white,line width=4] (s\i) to [out=270+\i*120,in=60+\i*120] (m\i) to [out=240+\i*120,in=30+\i*120] (e\i);
    \draw [->] (s\i) to [out=270+\i*120,in=60+\i*120] (m\i) to [out=240+\i*120,in=30+\i*120] (e\i);
    \draw [color=white,line width=4] (b) -- (b\i); 
    \draw [color=blue] (b) -- (b\i);}
\foreach \i/\p in {1/left,2/below right,3/above}
        \node [\p] at (e\i) {$C_\i$};
\foreach \i in {0,1,2,3}
     {\draw [color=white,line width=4] (0,0) circle (0.93*\radius+\i*0.05*\radius);
      \draw [color=red] (0,0) circle (0.93*\radius+\i*0.05*\radius);}
\foreach \i in {1,2,3}
   {\filldraw [color=white,draw=red] (q\i) circle (0.17*\radius);
    \node at (q\i) {$I_\i$};}
\node [right]  at (b2) {$\Theta$};
\end{tikzpicture} &
\begin{tikzpicture}[vertex/.style={circle,draw,minimum size = 2mm,inner sep=0pt},thick,scale=0.8]
\draw [white] (90:1.3*\radius) -- (-90:1.25*\radius);
\draw [color=red] (0,0) circle (\radius);
\node (b) at (0,0) [vertex,fill=blue] {};
\foreach \i in {1,2,3}
   {\coordinate (b\i) at (90+\i*120:1.3*\radius);
    \draw [color=white,line width=8] (b) -- (b\i); 
    \draw [color=blue] (b) -- (b\i); 
    \path (b) ++ (96+\i*120:0.8*\radius) coordinate (s\i);
    \path (b) ++ (150+\i*120:0.22*\radius) coordinate (m\i);
    \path (b) ++ (204+\i*120:0.8*\radius) coordinate (e\i);
    \node (p\i) at (150+\i*120:\radius) [vertex,fill=red] {};
    \node at (150+\i*120:1.175*\radius) {$q_\i$};}
\foreach \i in {1,2,3}
  \draw [->] (s\i) to [out=270+\i*120,in=60+\i*120] (m\i) to [out=240+\i*120,in=30+\i*120] (e\i);
\foreach \i/\p in {1/left,2/below right,3/above}
    \node [\p] at (e\i) {$C_\i$};
\begin{scope}[yshift=-0.45*\radius]
\foreach \j/\m/\t in {0/1.2/{-10},1/1.1/{-5},2/1/3,3/1/0}
    \coordinate (y\j) at (-55+\t+\j*90:\m*0.1*\radius);
\coordinate (redcentre) at (0,0);
\end{scope}
\foreach \i in {1,2,3}
    {\foreach \j in {0,1,2,3}
        \draw [color=white,line width=4] (p\i) -- (y\j);}
\foreach \i in {1,2,3}
    {\foreach \j in {0,1,2,3}
        \draw [color=red] (p\i) -- (y\j);}
\filldraw [color=white,draw=red] (redcentre) circle (0.15*\radius);
\node at (redcentre) {$I$};
\node [right]  at (b2) {$\Theta$};
\end{tikzpicture}
\end{tabular}
\caption{Cases~\eqref{case:commonapex} (left) and~\eqref{case:nocommonapex} (right) of Theorem~\ref{thm:thetacurve}. The second vertex of $\Theta$ is placed at infinity.}
\label{fig:thetacurve}
\end{center}
\end{figure}

\begin{proof}
Let $D$ be a cycle in $K_n$ linking some cycle $C_i$ in $\Theta$. Then in the homology group $H_1(\real^3-D)$ we have
\[
[C_1]+[C_2]+[C_3]=0.
\] 

Since there is no strong link between $\Theta$ and $K_n$, every $[C_i]$ is $\pm 1$ or $0$. Since some term is non-zero, each of the three possible values must occur exactly once in the sum. Thus, $D$ must link exactly two of the $C_i$, one positively, and one negatively. We will use this fact repeatedly.

Without loss of generality $C_1$ links some cycle in $K_n$. Since $C_1$ does not strongly link any cycle in $K_n$, by Theorem~\ref{thm:star} it links some star $p_1O_1I_1$. Let $p_1qr$ be a triangle in $p_1O_1I_1$. Then $p_1qr$ links exactly one of $C_2$ and $C_3$.  So without loss of generality we may assume that $C_2$ links $p_1qr$, and therefore $C_2$ links a star $p_2O_2I_2$. We consider two cases, according to whether or not there is a vertex $p$ common to all triangles in $K_n$ linking either $C_1$ or $C_2$. 

\subsection*{Case 1: All triangles linking $C_1$ or $C_2$ share a vertex}

Suppose that there is a vertex $p$ common to all triangles linking either $C_1$ or $C_2$. Then we may choose the stars linking $C_1$ and $C_2$ so that $p_1=p_2=p$. Moreover, since any triangle linking $C_3$ must also link either $C_1$ or $C_2$, if $C_3$ also links some triangle it contains the vertex $p$.  Hence in this case, $C_3$ must link a star of the form $pO_3I_3$. 

Observe that $O_1\cap O_2,O_1\cap I_2,I_1\cap O_2,I_1\cap I_2$ are disjoint sets with union $X=K_n-\{p\}$. Given a triangle $pqr$ in $K_n$
we have
\[
\link{pqr}{C_3} = -\link{pqr}{C_1}-\link{pqr}{C_2},
\]
so the ordered triple 
\[
\bigl(\link{pqr}{C_1},\link{pqr}{C_2},\link{pqr}{C_3}\bigr) 
\] 
is completely determined by the sets in $O_1\cap O_2,O_1\cap I_2,I_1\cap O_2,I_1\cap I_2$ that $q$ and $r$ belong to. Calculating these triples we obtain Table~\ref{tab:thetalinkingtriples}.

If $O_1\cap O_2$ and $I_1\cap I_2$ are both nonempty then by Table~\ref{tab:thetalinkingtriples} $C_3$ strongly links some cycle in $K_n$. Since this is contrary to our hypothesis, at least one of these intersections must be empty. Reversing the orientation of $\Theta$ switches the roles of $O_i$ and $I_i$ for each $i$, so after doing this (if necessary), without loss of generality we may assume that $I_1\cap I_2=\emptyset$. Table~\ref{tab:thetalinkingtriples} then shows that $\link{pqr}{C_3}=+1$ precisely when $q\in(O_1\cap I_2)\cup(I_1\cap O_2)$ and $r\in O_1\cap O_2$. Observe that the conditions $I_1\cap I_2=\emptyset$ and $O_1\cup I_1=O_2\cup I_2=X$ together imply $I_1\subseteq O_2$, $I_2\subseteq O_1$. We therefore have
\begin{align*}
O_3 &= I_1\cup I_2, &
I_3 &= O_1\cap O_2,
\end{align*}
and we see that $I_1$, $I_2$, $I_3$ are pairwise disjoint sets with union $X$. This implies that we also have 
\begin{align*}
O_1&=I_2\cup I_3, &
O_2&=I_1\cup I_3,
\end{align*}
and it follows that $C_1$, $C_2$ and $C_3$ link $K_n$ in stars as given by~\eqref{case:commonapex}.

\begin{table}
\[
\begin{array}{ccccc}\toprule
   & \multicolumn{4}{c}{q} \\ \cmidrule{2-5}
r   & O_1\cap O_2 & O_1\cap I_2 & I_1\cap O_2 &  I_1\cap I_2 \\ \midrule
O_1\cap O_2 & (0,0,0)   & (0,-1,+1) & (-1,0,+1) & (-1,-1,+2) \\
O_1\cap I_2 & (0,+1,-1) & (0,0,0)   & (-1,+1,0) & (-1,0,+1)  \\
I_1\cap O_2 & (+1,0,-1) & (+1,-1,0) & (0,0,0)   & (0,-1,+1)  \\
I_1\cap I_2 & (+1,+1,-2)& (+1,0,-1) & (0,+1,-1) & (0,0,0)  \\ \bottomrule
\end{array}
\]
\caption{The triples $\bigl(\link{pqr}{C_1},\link{pqr}{C_2},\link{pqr}{C_3}\bigr)$ in Case~1 of the proof of Theorem~\ref{thm:thetacurve}.}
\label{tab:thetalinkingtriples}
\end{table}

\subsection*{Case 2: Triangles linking $C_1$ or $C_2$ do not all share a vertex}

Every triangle linking $C_1$ contains $p_1$, and every triangle linking $C_2$ contains $p_2$, so if these triangles do not all share a common vertex we must have $p_1\neq p_2$. Reversing the orientation of $\Theta$ exchanges the roles of $O_1$ and $I_1$, so without loss of generality we may assume that $p_2\in O_1$. At the beginning of the proof, we assumed that without loss of generality there is a triangle $T$ in the star $p_1O_1I_1$ linking $C_2$.  Now since every triangle linking $C_2$ contains $p_2$, $T$ must have the form $p_1p_2r$ for some $r\in I_1$. Then 
\begin{equation}
\label{eq:bothoutvertices}
\link{p_2p_1r}{C_2} = -\link{p_2p_1r}{C_1} = \link{p_1p_2r}{C_1}=+1.
\end{equation}
It follows that we also must have $p_1\in O_2$.

Now there must be some triangle in $K_n$ linking $C_1$ that does not contain $p_2$, because otherwise $p_2$ would be common to all triangles linking $C_1$ or $C_2$. Let $T_1=p_1q_1r_1\in p_1O_1I_1$ be such a triangle. Then $T_1$ does not link $C_2$ because it does not contain $p_2$, so it must link $C_3$ instead. By Theorem~\ref{thm:star} $C_3$ links a star $p_3O_3I_3$ in $K_n$, where $p_3\in T_1$. If $p_3=p_1$ then this vertex would be common to all triangles linking $C_1$ or $C_3$. Since every triangle linking $C_2$ also links either $C_1$ or $C_3$, this would give us a vertex common to all triangles linking $C_1$ or $C_2$, which is contrary to the hypothesis of this case. So $p_3\neq p_1$. We also have $p_3\neq p_2$, because $p_2\notin T_1$.  So the vertices $p_1$, $p_2$ and $p_3$ are distinct.

Suppose now that $p_3=r_1$, so that $p_3\in I_1$. Then there must be some vertex $r\in I_1$ such that $r\not =r_1$, because otherwise $p_3$ would be common to all triangles linking $C_1$ or $C_3$, and hence to all triangles linking $C_1$ or $C_2$. Consider the triangle $p_1q_1r$. This triangle links $C_1$, because it belongs to the star $p_1O_1I_1$, but it does not link either $C_2$ or $C_3$, because it does not contain either $p_2$ or $p_3$. This is a contradiction, so we must have $p_3=q_1\in O_1$. The argument of equation~\eqref{eq:bothoutvertices} then gives $p_1\in O_3$.

There must also be some triangle $T_2=p_2q_2r_2\in p_2O_2I_2$ that does not contain $p_1$, because otherwise $p_1$ is common to all triangles linking $C_1$ or $C_2$. Arguing as above we conclude that $p_3\in O_2$, and $p_2\in O_3$. We now have
\begin{align}
\label{eq:O-inclusions}
\{p_2,p_3\}&\subseteq O_1, &
\{p_1,p_3\}&\subseteq O_2, &
\{p_1,p_2\}&\subseteq O_3.
\end{align}
Suppose that there is some $q\in O_1$ such that $p_2\neq q\neq p_3$. Then for any $r\in I_1$ the triangle $p_1qr$ links $C_1$, because it belongs to the star $p_1O_1I_1$, but it does not link $C_2$ or $C_3$ because it does not contain $p_2$ or $p_3$. This is a contradiction, so we must have $O_1=\{p_2,p_3\}$. By the same reasoning the other inclusions in ~\eqref{eq:O-inclusions} must also be equalities, and we obtain finally
\begin{align*}
O_1 &= \{p_2,p_3\}, &
O_2 &= \{p_1,p_3\}, &
O_3 &= \{p_1,p_2\},
\end{align*}
and hence
\[
I_1=I_2=I_3=I=K_n-\{p_1,p_2,p_3\}
\] 
as claimed. To conclude we note that we must have $n\geq 5$, because if $n=4$ we would have $I=\{r\}$, and the vertex $r$ would be common to all triangles linking a cycle in $\Theta$.
\end{proof}

\begin{cor}
\label{cor:disjointaxes}
Let $n\geq 4$. 
With notation as in Theorem~\ref{thm:thetacurve}, let $\Theta$ and $K_n$ be disjointly embedded in $\real^3$ such that 
\begin{enumerate}
\item
$C_1$ links a fan in $K_n$ with axis $pq$, and
\item
$C_2$ links a star in $K_n$ that is either a fan with axis $xy$ disjoint from $pq$, or a proper star with apex $x$ disjoint from $pq$.
\end{enumerate}
Then $C_3$ strongly links some cycle in $K_n$.
\end{cor}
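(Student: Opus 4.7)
The strategy is to argue by contradiction: assume $\Theta$ and $K_n$ are weakly linked, then apply Theorem~\ref{thm:thetacurve} to rule out both cases \eqref{case:commonapex} and \eqref{case:nocommonapex}. First observe that by hypothesis both $C_1$ and $C_2$ link cycles in $K_n$, so $\Theta$ and $K_n$ are linked; and by Lemma~\ref{lem:star-implies-weak}, $C_1$ and $C_2$ each weakly link $K_n$, so neither strongly links any cycle in $K_n$. Hence if I can show $\Theta$ and $K_n$ are \emph{not} weakly linked, then by definition some cycle in $\Theta$ must strongly link some cycle in $K_n$, and this cycle must be $C_3$, as required.

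To rule out case~\eqref{case:commonapex}, I would compute the set of vertices of $K_n$ common to all triangles linking $C_1$, and similarly for $C_2$. Since $n \geq 4$, the fan linked by $C_1$ contains $|I_1| = n-2 \geq 2$ triangles all sharing the edge $pq$ and nothing more, so $\{p,q\}$ is exactly the common-vertex set. Analogously, the common-vertex set for $C_2$ is $\{x,y\}$ if $C_2$ links a fan with axis $xy$, or $\{x\}$ if $C_2$ links a proper star with apex $x$ (using that in a proper star, two triangles $xq_1r_1$ and $xq_2r_2$ with $q_1 \neq q_2$, $r_1 \neq r_2$ meet only in $\{x\}$). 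A common apex $p^*$ as provided by~\eqref{case:commonapex} must lie in both sets, but by the disjointness hypothesis the intersection is empty in either subcase, contradicting the existence of such a $p^*$.

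To rule out case~\eqref{case:nocommonapex}, I would observe that it would force $C_1$ to link the proper star $p_1\{p_2,p_3\}I$ with $|I| = n-3 \geq 2$, whereas by hypothesis $C_1$ links the fan $p\{q\}I_1$, which is improper. The set of triangles positively linking a cycle is determined by the linking numbers, so $C_1$ cannot simultaneously link both a fan and a proper star; as a quick sanity check, the fan contains $n-2$ triangles while the proper star contains $2(n-3)$, and these agree only at $n=4$, contradicting the $n \geq 5$ required in case~\eqref{case:nocommonapex}. Together with the first paragraph this completes the proof. The main technical point is the clean recovery of the common vertices of $C_1$'s linking triangles from the fan's axis, which crucially uses $n \geq 4$ to guarantee at least two triangles in the fan; everything else is direct bookkeeping with Theorem~\ref{thm:thetacurve}.
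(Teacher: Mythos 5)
Your proof is correct and takes essentially the same route as the paper's: both arguments compute that the common-vertex sets of triangles linking $C_1$ and $C_2$ are $\{p,q\}$ and $\{x,y\}$ (or $\{x\}$) and are disjoint, ruling out Case~\eqref{case:commonapex}; rule out Case~\eqref{case:nocommonapex} because it forces $C_1$ to link a proper star while by hypothesis it links a fan; and conclude the strong link must be carried by $C_3$ since $C_1$ and $C_2$ link stars and hence only weakly link $K_n$. You merely make explicit two points the paper leaves implicit (that the set of positively linked triangles, and hence the star, is determined by the linking numbers, and the $n\geq 4$ counting that pins down the common-vertex sets), which is harmless.
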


\begin{proof}
Since $n\geq 4$ the only vertices common to all triangles linking $C_1$ are $p$ and $q$, and the only vertices common to all triangles linking $C_2$ are either $x$ and $y$ (if $C_2$ links a fan), or $x$ alone (if $C_2$ links a proper star). By hypothesis there is therefore no vertex common to all triangles linking $\Theta$, and so if $\Theta$ does not strongly link $K_5$ we must be in Case~\eqref{case:nocommonapex} of Theorem~\ref{thm:thetacurve}. But in Case~\eqref{case:nocommonapex} of the theorem the stars of $C_1$, $C_2$ and $C_3$ are all proper, contradicting the fact that $C_1$ links a fan. Thus in fact neither case holds, so some cycle in $\Theta$ strongly links a cycle in $K_n$. Both $C_1$ and $C_2$ link stars, so it is $C_3$ that strongly links $K_n$.
\end{proof}

\section{Weak linking of $K_4$ with $K_n$}
\label{sec:K4Kn}

Let $G=\langle p_0,p_1,p_2,p_3\rangle\cong K_4$, and let
\begin{align*}
C_0 &= p_1p_2p_3, & C_2 &= p_0p_1p_3, \\
C_1 &= p_3p_2p_0, & C_3 &= p_2p_1p_0.
\end{align*}
With these orientations we have
\[
[C_0]+[C_1]+[C_2]+[C_3] = 0
\]
in $H_1(K_4;\integer)$, and for any $i\neq j$ the chain $C_i+C_j$ represents a $4$-cycle. We use $C_1$, $C_2$, $C_3$, and $C_4$ in Theorem~\ref{thm:K4Kn}.

\begin{thm}
\label{thm:K4Kn}
Let $n\geq 4$, and 
suppose that $G\cong K_4$ and $H\cong K_n$ are weakly linked graphs in $\real^3$. Then exactly one of the following holds:
\begin{enumerate}
\renewcommand{\labelenumi}{(B\arabic{enumi})}
\renewcommand{\theenumi}{B\arabic{enumi}}
\item
\label{case:K4Kn-commonvertex}
There is a vertex $q$ of $H$ common to all triangles linking a curve in $G$. Then there are pairwise disjoint sets $I_0$, $I_1$, $I_2$, $I_3$ (at most two of them empty) such that $I_0\cup I_1\cup I_2\cup I_3=H-\{q\}$, and after reversing the orientation of $\real^3$ (if necessary), each $C_i$ links the star $qO_iI_i$ in $K_n$, where
\[
O_i = H-\{q\}-I_i.
\]
\item
\label{case:K4Kn-nocommonvertex}
There is no vertex of $H$ common to all triangles linking a curve in $G$. Then $n\geq 5$ and there are distinct vertices $q_1,q_2,q_3$ in $H$ such that, after relabelling the vertices of $G$ and reversing orientation of $\real^3$ (if necessary), $C_0$ does not link $H$ and $C_1,C_2,C_3$ link $H$ in the stars
\[
q_1\{q_2,q_3\}I, \qquad q_2\{q_1,q_3\}I, \qquad q_3\{q_1,q_2\}I,
\]
where $I=H-\{q_1,q_2,q_3\}$. In particular, the vertex $p_0$ of $G$ is common to all triangles of $G$ linking $H$; and a triangle $T$ of $H$ links $G$ if and only if it shares exactly one edge with $T^*=q_1q_2q_3$. 
\end{enumerate}
\end{thm}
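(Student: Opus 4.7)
The plan is to use Theorem~\ref{thm:star} on each triangle of $G$ that links $H$, and Theorem~\ref{thm:thetacurve} on the theta subgraphs of $G$ formed by pairs of triangles. In $K_4$, any two triangles $C_i,C_j$ share an edge and, together with the unique 4-cycle $D_{ij}\subset G$ avoiding that edge, form a theta curve $\Theta_{ij}$ whose three simple cycles satisfy $D_{ij}=C_i+C_j=-(C_k+C_l)$, where $\{k,l\}=\{0,1,2,3\}\setminus\{i,j\}$.

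First, at least two triangles of $G$ link $H$. Some triangle $C_i$ of $G$ links $H$, because every cycle of $K_4$ is a sum of triangles. Choosing a triangle $T$ of $H$ with $\link{C_i}{T}\neq 0$ and applying Lemma~\ref{lem:K4case} to $T$ (which weakly links $G=K_4$) shows $T$ links $G$ in a fan, whose two triangles both link $T$. Hence $|L|\geq 2$, where $L=\{i:C_i\text{ links }H\}$. Then for each pair of linking triangles $(C_i,C_j)$, the theta $\Theta_{ij}$ is weakly linked to $H$ and Theorem~\ref{thm:thetacurve} applies: the pair falls in Case~(A1) (common apex in $H$) or Case~(A2).

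We split into cases according to whether every pair in $L$ is in Case~(A1) with a single common apex $q\in H$ (our Case~(B1)), or some pair is in Case~(A2) (our Case~(B2)). In Case~(B1), the vertex $q$ lies in every triangle of every linking star; after re-apexing any fan in which $q$ is on the axis, each linking $C_i$ has star $qO_iI_i$, and we set $I_i=\emptyset$ for $i\notin L$. Case~(A1) of Theorem~\ref{thm:thetacurve} applied to $\Theta_{ij}$ gives a three-part partition of $H-\{q\}$ identified with $I_i$, $I_j$, and $I_{D_{ij}}=I_k\cup I_l$; collating across all pairs yields the four-part partition $I_0,\ldots,I_3$ of $H-\{q\}$, pairwise disjoint, with at most two empty since $|L|\geq 2$.

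In Case~(B2), WLOG (relabelling and possibly reversing $\real^3$) the pair $(C_1,C_2)$ is in Case~(A2) of $\Theta_{12}$, yielding distinct $q_1,q_2,q_3\in H$, $I=H-\{q_1,q_2,q_3\}$, $n\geq5$, and the stars for $C_1,C_2,D_{12}$ as stated. Applying Theorem~\ref{thm:thetacurve} to $\Theta_{03}$ (whose 4-cycle is $-D_{12}$, which links $H$), Case~(A2) of $\Theta_{03}$ is ruled out by an apex-consistency check against Case~(A2) of $\Theta_{13}$, which would simultaneously force the apex of $C_3$ to lie in $\{q_1,q_2,q_3\}$ and outside it. Hence Case~(A1) holds for $\Theta_{03}$ with common apex $q_3$; the three-part partition of $H-\{q_3\}$ must have $\{q_1,q_2\}$ as one part (matching the $I$-set of $-D_{12}$), and the remaining two parts partition $I$. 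A further apex-consistency argument via $\Theta_{13}$ rules out the sub-case where both these parts are nonempty; thus one of $C_0,C_3$ does not link, and after relabelling to call the non-linking triangle $C_0$, the triangle $C_3$ links the third star $q_3\{q_1,q_2\}I$. The claim that $T\subset H$ links $G$ iff $T$ shares exactly one edge with $q_1q_2q_3$ follows by inspection of the three stars. The main obstacle is this Case~(B2) analysis, in particular ruling out the sub-case where both $C_0$ and $C_3$ link $H$: this requires applying Theorem~\ref{thm:thetacurve} to multiple theta subgraphs and invoking both the identity $\sum_i[C_i]=0$ in $H_1(G)$ and weak-linkage to force a contradiction.
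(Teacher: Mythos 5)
You take the same overall route as the paper --- Theorem~\ref{thm:star} for individual triangles and Theorem~\ref{thm:thetacurve} applied to the theta curves $C_i\cup C_j$ --- but your case split has a genuine hole. ``Every pair in $L$ lies in Case~\eqref{case:commonapex} with a single common apex $q$'' versus ``some pair lies in Case~\eqref{case:nocommonapex}'' is not a dichotomy: all pairs can satisfy~\eqref{case:commonapex} while the common vertex varies from pair to pair. This happens precisely when every triangle of $G$ linking $H$ links a \emph{fan} in $H$, with axes, say, $xy$, $yz$ and $zx$: each theta curve then has a common apex ($y$, $z$ or $x$ respectively), so no pair is in Case~\eqref{case:nocommonapex}, yet no single vertex of $H$ is common to all triangles linking $G$. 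Your proof never reaches this configuration, and in particular you cannot conclude in case~\eqref{case:K4Kn-nocommonvertex} of the theorem that some pair is in Case~\eqref{case:nocommonapex}. (If some linking triangle of $G$ links a proper star, its apex is unique and the pairwise~\eqref{case:commonapex} conditions force all apexes to coincide; so the all-fans situation is exactly what is missing.) The paper disposes of it explicitly: by Corollary~\ref{cor:disjointaxes} the axes of any two fans linked by triangles of the same $K_4$ must intersect, so in the bad configuration the axes form a triangle $xyz$ in $H$; but then $xyz$ would link three of the $C_i$, which is impossible since by Lemma~\ref{lem:K4case} a triangle of $H$ weakly linking $K_4$ links it in a fan, i.e.\ in exactly two triangles. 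You invoke Lemma~\ref{lem:K4case} yourself to get $|L|\geq 2$, so the tool is at hand, but this step is absent from your argument.

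A second, smaller gap is in your Case~(B1) ``collating'': each application of Case~\eqref{case:commonapex} determines the stars only after possibly reversing the orientation of \emph{that particular} theta curve, so the sign choices for different pairs can a priori conflict --- pair $(i,j)$ giving $I_i\cap I_j=\emptyset$ while pair $(i,k)$ gives $O_i\cap O_k=\emptyset$. The paper spends a dedicated argument here: it first shows for each linking pair that $O_i\cap O_j=\emptyset$ or $I_i\cap I_j=\emptyset$ (otherwise the square $C_i+C_j$ strongly links a triangle $qxy$), and then proves by an inclusion-chasing contradiction (via $I_j\subseteq O_i\subseteq I_k$ and $O_k\subseteq I_i\subseteq O_j$) that the choice $I_a\cap I_b=\emptyset$ can be made simultaneously for all pairs after a single reversal of the orientation of $\real^3$; your one-line collation asserts this conclusion without proof. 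By contrast, your Case~(B2) outline --- locating the \eqref{case:nocommonapex} pair on $(C_1,C_2)$, eliminating the possible apexes of $\str{C_3}$ via the theta curves $C_0\cup C_3$ and $C_1\cup C_3$ together with strong links of squares, and forcing $\str{C_3}=q_3\{q_1,q_2\}I$ so that $C_0$ cannot link --- matches the paper's argument in all essentials, though the sub-case eliminations are stated at a level of generality (``apex-consistency check'') that would need to be written out.
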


Embeddings realising~\eqref{case:K4Kn-commonvertex} and~\eqref{case:K4Kn-nocommonvertex} are illustrated in Figure~\ref{fig:K4Kn}.
These embeddings belong to the families of embeddings realising Theorem~\ref{thm:KmKn-triangle}, which we prove are weakly linked in Theorem~\ref{thm:KmKn-triangle-weak}.

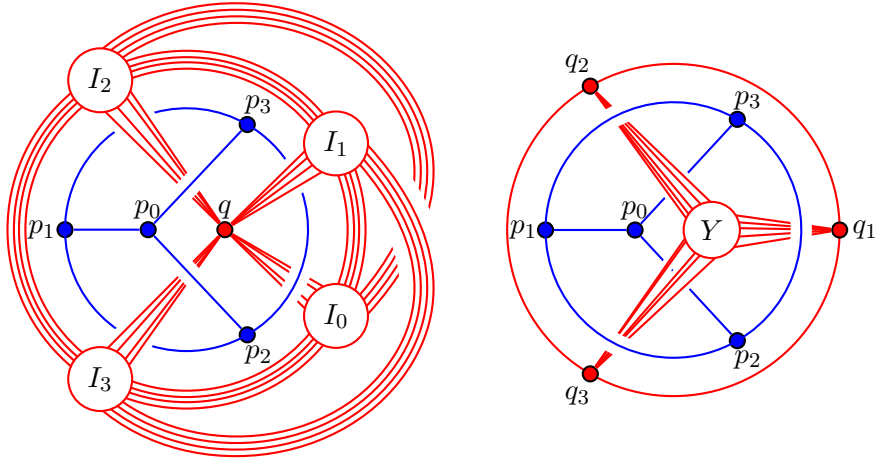
\begin{figure}[t]
\setlength{\radius}{1.7cm}
\begin{center}
\begin{tabular}{cc}
\begin{tikzpicture}[vertex/.style={circle,draw,minimum size = 2mm,inner sep=0pt},thick,null/.style={minimum size = 0mm, inner sep = 0pt}]
\foreach \i/\p in {1/below right,2/right,3/right}
   {\node (p\i) at (60+\i*120:0.95\radius) [vertex,fill=blue] {};
        \node at (60+\i*120:1.13*\radius) {$p_\i$};}
\node (p0) at (-0.3*\radius,0) [vertex,fill=blue] {};
\node (q) at (0.3*\radius,0) [vertex,fill=red] {}; 
\foreach \i/\t in {0/-30,1/30,2/120,3/240}    
   {\node (q\i) at (\t:1.35*\radius) {};}
\foreach \j/\m/\t in {0/1/15,1/1/{-10},2/1/-5,3/1/0}
    \path (q0) ++ (-14+\t+\j*90:\m*0.15*\radius) node (q0\j) [vertex] {};
\foreach \j/\m/\t in {0/1/40,1/1/{-10},2/1/0,3/1.1/10}
    \path (q1) ++ (-14+\t+\j*90:\m*0.15*\radius) node (q1\j) [vertex] {};
\foreach \j/\m/\t in {0/1/15,1/1/{-10},2/1/0,3/1/0}
    \path (q2) ++ (-34+\t+\j*90:\m*0.15*\radius) node (q2\j) [vertex] {};
\foreach \j/\m/\t in {0/1/15,1/1/{-10},2/1/0,3/1/0}
    \path (q3) ++ (-14+\t+\j*90:\m*0.15*\radius) node (q3\j) [vertex] {};
\foreach \i in {0,1,2,3}
   {\coordinate (q2a\i) at (120:1.375*\radius+\i*0.05*\radius);
    \coordinate (q1t\i) at (30:1.9*\radius+\i*0.05*\radius);
    \coordinate (q0a\i) at (-30:1.375*\radius+\i*0.05*\radius);
    \draw [red] (q2a\i) to [out=45,in = 120] (q1t\i) to [out=-60,in=45] (q0a\i);}
\foreach \i in {0,1,2,3}
   {\coordinate (q3a\i) at (-120:1.375*\radius+\i*0.05*\radius);
    \coordinate (q0t\i) at (-30:1.9*\radius+\i*0.05*\radius);
    \coordinate (q1a\i) at (30:1.375*\radius+\i*0.05*\radius);}
    \draw [white,line width=16] (q3a1) to [out=-45,in = -120] (q0t1) to [out=60,in=-45] (q1a1);
    \draw [white,line width=16] (q3a2) to [out=-45,in = -120] (q0t2) to [out=60,in=-45] (q1a2);
\foreach \i in {0,1,2,3}
  \draw [red] (q3a\i) to [out=-45,in = -120] (q0t\i) to [out=60,in=-45] (q1a\i);
\foreach \r in {1.35,1.4,1.26,1.31}
  \draw [color=red] (0,0) circle (\r*\radius);
\foreach \j in {0,1,2,3}
        \draw [color=red] (q) -- (q0\j);
\draw [color=white,line width=8] (-40:0.95*\radius) arc (-40:0:\radius);
\foreach \i in {1,2,3}
    \draw [color=blue] (p\i) arc (60+\i*120:180+\i*120:0.95\radius);
\foreach \i in {1,2,3}
    {\foreach \j in {0,1,2,3,}
        \draw [color=white,line width=6] (q) -- (q\i\j);}
\foreach \i in {1,2,3}
    {\foreach \j in {0,1,2,3}
        \draw [color=red] (q) -- (q\i\j);}
\foreach \i in {1,2,3}
        \draw [color=white,line width=8] (p\i) -- (p0);
\foreach \i in {1,2,3}
        \draw [color=blue] (p\i) -- (p0);
\foreach \i in {0,1,2,3} 
{\filldraw [color=white,draw=red] (q\i) circle (0.25*\radius);
\node at (q\i) {$I_{\i}$};}
\node [above] at (q) {$q$};
\node [above] at (p0) {$p_0$};
\end{tikzpicture}&
\begin{tikzpicture}[vertex/.style={circle,draw,minimum size = 2mm,inner sep=0pt},thick]
\draw [color=red] (0,0) circle (1.3*\radius);
\foreach \i in {1,2,3}
   {\node (p\i) at (60+\i*120:\radius) [vertex,fill=blue] {};
    \node (q\i) at (-120+\i*120:1.3*\radius) [vertex,fill=red] {};
    \node at (60+\i*120:1.17*\radius) {$p_\i$};
    \node at (-120+\i*120:1.5*\radius) {$q_\i$};}
\node (p0) at (-0.3*\radius,0) [vertex,fill=blue] {};
\node [above] at (p0) {$p_0$};
\begin{scope}[xshift=0.3*\radius]
\foreach \j/\m/\t in {0/1.2/{-10},1/1.1/{-5},2/1/3,3/1/0,4/0.5/0}
    \coordinate (y\j) at (-44+\t+\j*72:\m*0.15*\radius);
\coordinate (redcentre) at (0,0);
\end{scope}
\foreach \i in {1,2,3}
        \draw [color=blue] (p\i) -- (p0);
\foreach \i in {1,2,3}
    {\foreach \j in {0,1,2,3,4}
        \draw [color=white,line width=4] (q\i) -- (y\j);}
\foreach \i in {1,2,3}
    {\foreach \j in {0,1,2,3,4}
        \draw [color=red] (q\i) -- (y\j);}
\foreach \i in {1,2,3}
    \draw [color=white,line width=8] (70+\i*120:\radius) arc (70+\i*120:170+\i*120:\radius);
\foreach \i in {1,2,3}
    \draw [color=blue] (p\i) arc (60+\i*120:180+\i*120:\radius);
\filldraw [color=white,draw=red] (redcentre) circle (0.22*\radius);
\node at (redcentre) {$Y$};
\node (xyz) at (-90:2.05*\radius) {};
\end{tikzpicture}
\end{tabular}
\caption{Embeddings of $G\cong K_4$ (blue) and $H\cong K_n$ (red) realising Cases~\eqref{case:K4Kn-commonvertex} (left) and~\eqref{case:K4Kn-nocommonvertex} (right) of Theorem~\ref{thm:K4Kn}.}
\label{fig:K4Kn}
\end{center}
\end{figure}

\begin{proof}
We consider the two cases in turn.

\subsection*{Case 1: All triangles in $H$ linking $G$ share a common vertex $q$.}

Then each triangle $C_i$ in $G$ links a star $qO_iI_i$ in $H$, where we allow the possibility that $O_i$ or $I_i$ is empty to cover the case where $C_i$ doesn't link $H$.

Suppose that $C_i$ and $C_j$ both link $H$. 
If $O_i\cap O_j\neq \emptyset \neq I_i\cap I_j$, then we may choose $x\in O_i\cap O_j$ and $y\in I_i\cap I_j$. Then $C_i$ and $C_j$ both link the triangle $qxy\in H$ with linking number 1, and consequently the square $C_i+C_j$ strongly links $qxy$. So we must have either $O_i\cap O_j=\emptyset$, or $I_i\cap I_j=\emptyset$. Reversing the orientation of $\real^3$ exchanges the roles of $O_\ell$ and $I_\ell$ for all $\ell$, so after doing this (if necessary) we may assume that $I_i\cap I_j=\emptyset$. 
This implies $I_i\subseteq O_j$ and $I_j\subseteq O_i$, because $\{O_\ell,I_\ell\}$ is a partition of $H-\{q\}$ for each $\ell$. 

We claim now that if $C_k$ also links $H$, then $I_i\cap I_k=I_j\cap I_k=\emptyset$. Suppose to the contrary that $I_i\cap I_k$ is nonempty. Then we must have $O_i\cap O_k=\emptyset$, by the previous paragraph, and arguing as above we must have $O_i\subseteq I_k$, and $O_k\subseteq I_i$. But then $I_j\subseteq O_i\subseteq I_k$ and $O_k\subseteq I_i\subseteq O_j$, so $I_j\cap I_k\neq\emptyset \neq O_j\cap O_k$, giving us a strong link. We must therefore have $I_a\cap I_b=\emptyset$ whenever $C_a$ and $C_b$ link $H$, and we extend this to hold for all $a$ and $b$ by setting $I_\ell=\emptyset$, $O_\ell = H-\{p\}$ if $C_\ell$ does not link $H$. Note that at most two of the $I_\ell$ can be empty, because any triangle in $H$ linking $G$ must link it in exactly two triangles, one positively and one negatively.

To complete the proof in this case we must show that $I_0\cup I_1\cup I_2\cup I_3=H-\{q\}$. Let $x\in H-\{q\}$. If $x\notin I_i$ then we necessarily have $x\in O_i$. Then $qxy$ links $C_i$ with linking number $+1$ for some $y$, so it must link some $C_k$ with linking number $-1$. Then $qyx\in qO_kI_k$, and we conclude that $x\in I_k$.

\subsection*{Case 2: There is no vertex common to all triangles in $H$ linking $G$.}

 First we show that some triangle $C_i$ of $G$ must link a proper star in $H$.

Suppose to the contrary that every triangle of $G$ that links $H$ links it in a fan. We may suppose that some triangle $C_i$ links $H$ in the fan with axis $xy$. By assumption $x$ does not belong to every triangle of $H$ linking $G$, so some $C_j$ links a triangle $T_1\subseteq H$ that does not contain $x$. Then $C_j$ links a fan in $H$, and since $C_i\cup C_j$ is a theta curve the axis of this fan must meet $xy$, by Corollary~\ref{cor:disjointaxes}. Thus $C_j$ links $H$ in a fan with axis $\pm yz$, for some $z\neq x$. Now since $y$ is not common to all triangles of $H$ linking $G$, some cycle $C_k$ must link a triangle $T_2$ that does contain $y$. Then $C_k$ links $H$ in a fan, and by Corollary~\ref{cor:disjointaxes} the axis of this fan must meet both $xy$ and $yz$. The axis must therefore be $\pm zx$. But now the triangle $xyz\subseteq H$ links all three triangles $C_i,C_j,C_k$, contradicting the fact that it must link $G$ in a star, which contains exactly two triangles. So some triangle in $G$ must link a proper star in $H$. Note that this immediately implies $n\geq 5$. 

Without loss of generality $C_1$ links a proper star with apex $q_1$. By assumption $q_1$ is not common to all triangles of $H$ linking $G$, so without loss of generality $C_2$ links a triangle $T$ that does not contain $q_1$. Now $C_1\cup C_2$ is a theta curve, and since $q_1$ is the only vertex common to all triangles linking $C_1$, and some triangle linking $C_2$ does not contain $q_1$, there is no vertex common to all triangles of $H$ linking $C_1$ or $C_2$. We must therefore be in Case~\eqref{case:nocommonapex} of Theorem~\ref{thm:thetacurve}, so there are vertices $q_2$ and $q_3$ such that (after reversing orientation of $\real^3$, if necessary) $C_1$, $C_2$ and $-(C_1+C_2)$ link $H$ in the stars
\begin{align*}
\str{C_1} &= q_1\{q_2,q_3\}I, \\
\str{C_2} &= q_2\{q_1,q_3\}I, \\
\str{-(C_1+C_2)} &= q_3\{q_1,q_2\}I,
\end{align*}
where $I=H-\{q_1,q_2,q_3\}$. 

We now consider $C_0$ and $C_3$. 
At least one of them must link $H$, because otherwise $C_0+C_3=-(C_1+C_2)$ would not, a contradiction. After relabelling the vertices of $G$ (if necessary) we may therefore assume that $C_3$ links $H$. We will show under these conditions that $C_0$ does not link $H$. 
To do this we use the fact that $C_0$, $C_3$ and $-(C_0+C_3)=C_1+C_2$ form a theta curve, with $-(C_0+C_3)$ linking $H$ in the star $-\str{-(C_1+C_2)} = q_3I\{q_1,q_2\}$. 

We first show that $C_3$ must link $H$ in a star with apex $q_3$. Suppose to the contrary that $q_3$ is not common to all triangles linking $C_3$.
If $C_3$ links $H$ in a star with apex $q_1$ then by  Case~\eqref{case:nocommonapex} of Theorem~\ref{thm:thetacurve} it must link $H$ in the star $q_1I\{q_2,q_3\}$; but then for any $r\in I$ both $C_2$ and $C_3$ link the triangle $q_2q_1r$ with linking number $+1$, and it follows that the square $C_2+C_3$ strongly links $H$. The same argument shows that $C_3$ cannot link a star with apex $q_2$, so suppose finally that $C_3$ links $H$ in a star with apex $q_4\in I$. Then by Theorem~\ref{thm:thetacurve} Case~\eqref{case:nocommonapex} it must be the case that $|I|=2$, so $n=5$  and there is a vertex $q_5$ such that $C_3$, $C_0$ and $-(C_0+C_3)$ link $H$ in the stars
\begin{align*}
\str{C_0} &= q_5\{q_3,q_4\}\{q_1,q_2\}, \\
\str{C_3} &= q_4\{q_3,q_5\}\{q_1,q_2\}, \\
\str{-(C_0+C_3)} &= q_3\{q_4,q_5\}\{q_1,q_2\}.
\end{align*}
Observe now that $C_1\cup C_3$ is a theta curve, and the stars 
\begin{align*}
\str{C_1} &= q_1\{q_2,q_3\}\{q_4,q_5\}, \\
\str{C_3} &= q_4\{q_3,q_5\}\{q_1,q_2\}
\end{align*}
don't satisfy~\eqref{case:nocommonapex}. It follows that the square $C_1+C_3$ must strongly link $H$: for example, it strongly links the square $q_1q_2q_4q_5$ in $H$. We conclude that $C_3$ must link $H$ in a star
$\str{C_3}=q_3O_3I_3$ with apex $q_3$, as claimed.

We now use the fact that both $C_1\cup C_3$ and $C_2\cup C_3$ are theta curves. The vertices $q_1$ and $q_2$ cannot both be common to every triangle in $H$ linking $C_3$,
because then the only triangle in $H$ linking $C_3$ would be $q_1q_2q_3$.  
So suppose without loss of generality that $q_1$ is not common to every triangle in $H$ linking $C_3$. Then $C_1\cup C_3$ must satisfy Case~\eqref{case:nocommonapex} of Theorem~\ref{thm:thetacurve}. However, the only star with apex $q_3$ that can satisfy~\eqref{case:nocommonapex} together with $\str{C_1}$ is $q_3\{q_1,q_2\}I=\str{-(C_1+C_2)}$, so it must be the case that $\str{C_3}=q_3\{q_1,q_2\}I$ too. But then
\[
\str{C_0+C_3} = \str{-(C_1+C_2)} = \str{C_3},
\]
and it follows that $C_0$ does not link $H$. This completes the proof.
\end{proof}

\begin{remark}
\label{rem:K4Kn-nocommonvertex-stars}
We may express the linking between $G$ and $H$ in
Case~\eqref{case:K4Kn-nocommonvertex} of Theorem~\ref{thm:K4Kn} in terms of stars in $G$ as follows. 

Notice that a triangle of $H$ links $G$ if and only if it has the form $\pm q_iq_jy$, for $i\neq j$ and $y\in I$. 
Consider the triangle $q_3q_2y$. This is linked positively by the triangles $C_3=p_1p_0p_2$ of $G$ and $-C_2=p_1p_0p_3$ of $G$, and hence links $G$ in the star $p_1\{p_0\}\{p_2,p_3\}$. Considering the triangles $yq_1q_3$ and 
$q_2q_1y$ in turn, we find that the triangles $\pm q_iq_jy$ link $G$ according to the following stars:
\begin{align*}
\str{q_3q_2y} &= p_1\{p_0\}\{p_2,p_3\}, \\
\str{yq_1q_3} &= p_2\{p_0\}\{p_1,p_3\}, \\
\str{q_2q_1y} &= p_3\{p_0\}\{p_1,p_2\}.
\end{align*}
\end{remark}

\section{Stars with no common apex}
\label{sec:nca}

\subsection{Introduction}

A key step in our characterisation of weakly linked embeddings of $G\cong K_m$ and $H\cong K_n$ is an analysis of the way in which two stars in a weakly linked embedding can meet. In Theorem~\ref{thm:nocommonapex}, we analyse a pair of stars that do not share a common apex. We begin with the following definitions.

\begin{defn}
\label{defn:commonapex}
Let $\Sigma_1=p_1O_1I_1,\Sigma_2=p_2O_2I_2$ be stars in $K_m$, with $m\geq 4$. 
\begin{enumerate}
\item
If $p_1\neq p_2$, then $\Sigma_1$ and $\Sigma_2$ are
\emph{mutually oriented} if $p_1\in O_2$ and $p_2\in O_1$.

\item
If there is a vertex $p$ of $K_m$ such that $\Sigma_1,\Sigma_2$ may be expressed in the form $\Sigma_i=pO_i'I_i'$ for each $i$, then $\Sigma_1$ and $\Sigma_2$ have \emph{a common apex}. Otherwise, we say that $\Sigma_1$ and $\Sigma_2$ have \emph{no common apex}.
\end{enumerate}
\end{defn}

\begin{remark} Suppose that $p_1\neq p_2$.  Then  $\eps_1\Sigma_1,\eps_2\Sigma_2$ are mutually oriented for a unique choice of signs $\eps_1,\eps_2\in\{\pm1\}$. Furthermore, if $\Sigma_1$ and $\Sigma_2$ are mutually oriented, then they have a common apex if and only if one of the following holds:
\begin{enumerate}
\item
$\Sigma_1$ is a fan with axis $p_1p_2$ (so that $\Sigma_1=p_1\{p_2\}I_1$ may be expressed in the form $\Sigma_1=p_2I_1\{p_1\}$).
\item
$\Sigma_2$ is a fan with axis $p_2p_1$ (so that $\Sigma_2=p_2\{p_1\}I_2$ may be expressed in the form $\Sigma_2=p_1I_2\{p_2\}$).
\item
There exists a vertex $p$ in $K_m$ such that $\Sigma_i$ is a fan with axis $pp_i$ for each $i$ (so that $\Sigma_i=p_iO_i\{p\}$ may be expressed in the form $\Sigma_i=p\{p_i\}O_i$ for each $i$).
\end{enumerate}
\end{remark}

\begin{thm}[Stars with no common apex]
\label{thm:nocommonapex}
Let $m$ and $n$ be positive integers with $m\geq 5$. Suppose that $G\cong K_m$ and $H\cong K_n$ are weakly linked graphs in $\real^3$, and $T_1$ and $T_2$ are triangles in $H$ linking $G$ in stars $\Sigma_1=p_1O_1I_1$ and $\Sigma_2=p_2O_2I_2$, respectively, which have no common apex.  Then (after possibly re-orienting $T_1$ and $T_2$ so that $\str{1}$ and $\str{2}$ are mutually oriented)
precisely one of the following holds:
\begin{enumerate}
\renewcommand{\labelenumi}{(C\arabic{enumi})}
\renewcommand{\theenumi}{C\arabic{enumi}}
\item\label{item:nca-Sigma1fan}
There is a vertex $p_3$ distinct from $p_1,p_2$ such that $I_1=\{p_3\}$ and $O_2=\{p_1,p_3\}$. 
\item\label{item:nca-Sigma2fan}
There is a vertex $p_3$ distinct from $p_1,p_2$ such that $I_2=\{p_3\}$ and $O_1=\{p_2,p_3\}$.
\item\label{item:nca-nofan}
There is a vertex $p_3$ distinct from $p_1,p_2$ such that $O_1=\{p_2,p_3\}$ and $O_2=\{p_1,p_3\}$.
\end{enumerate}
Thus, disregarding orientations, $\Sigma_1\cup\Sigma_2$ consists of all triangles sharing an edge with $T^*=p_1p_2p_3$, with the sole exception of $T^*$ itself in \eqref{item:nca-nofan}.
\end{thm}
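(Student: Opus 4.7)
The plan is to first use Theorem~\ref{thm:notdisjoint}, whose hypothesis is symmetric in the two complete graphs, with the roles of $G$ and $H$ swapped: since $T_1,T_2$ are both cycles in $H$ linking $G$, they must share a vertex of $H$. This splits the argument into \emph{Case~A}, where $T_1\cap T_2$ is an edge of $H$, and \emph{Case~B}, where $T_1\cap T_2$ is a single vertex.

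In Case~A the union $\Theta=T_1\cup T_2$ is an embedded theta curve in $H$ weakly linked with $G$, so I would directly apply Theorem~\ref{thm:thetacurve} to $\Theta$ and $G$. The no-common-apex hypothesis on $\Sigma_1,\Sigma_2$ rules out case~\eqref{case:commonapex} of that theorem, so case~\eqref{case:nocommonapex} applies, producing distinct vertices $p_1,p_2,p_3$ of $G$ with $\Sigma_1=p_1\{p_2,p_3\}I$ and $\Sigma_2=p_2\{p_1,p_3\}I$, where $I=G-\{p_1,p_2,p_3\}$. This is exactly conclusion~\eqref{item:nca-nofan}, with the mutual orientations matching automatically.

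In Case~B, writing $T_1=abc$ and $T_2=ade$ with $a$ the shared vertex, the strategy is to introduce an auxiliary triangle $T'$ in $H$ sharing an edge with each of $T_1$ and $T_2$ --- for instance $T'=abd$, which shares edge $ab$ with $T_1$ and edge $ad$ with $T_2$. Then $\Theta_1=T_1\cup T'$ and $\Theta_2=T_2\cup T'$ are theta curves in $H$, each weakly linked with $G$, and applying Theorem~\ref{thm:thetacurve} to each constrains how $\Sigma_1$, $\Sigma_2$, and the star $\Sigma'$ of $T'$ (empty if $T'$ does not link $G$) relate. A case analysis on which of~\eqref{case:commonapex} or~\eqref{case:nocommonapex} each theta falls into, combined with the no-common-apex hypothesis on $\Sigma_1,\Sigma_2$, pins down exactly one of conclusions~\eqref{item:nca-Sigma1fan}, \eqref{item:nca-Sigma2fan}, or~\eqref{item:nca-nofan}: both thetas in case~\eqref{case:nocommonapex} yields~\eqref{item:nca-nofan} after identifying the apex of $\Sigma'$ across the two applications and gluing the two proper-star triples; one theta in~\eqref{case:commonapex} and the other in~\eqref{case:nocommonapex} yields~\eqref{item:nca-Sigma1fan} or~\eqref{item:nca-Sigma2fan} by forcing the corresponding $\Sigma_i$ to be a fan; and both thetas in~\eqref{case:commonapex} ultimately forces a common apex on $\Sigma_1$ and $\Sigma_2$, contradicting the hypothesis. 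Should $T'$ fail to link $G$, one repeats the analysis with an alternative auxiliary triangle from $\{abe,acd,ace\}$, or, failing all four, replaces $T_1$ and $T_2$ by the 4-cycles $T_1\pm T'$ and $T_2\pm T'$ (which link $G$ in the same stars $\Sigma_1,\Sigma_2$) inside the theta-curve analyses.

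The main obstacle is the bookkeeping in Case~B: tracking apices across two applications of Theorem~\ref{thm:thetacurve}, handling the fan-versus-proper-star ambiguity of $\Sigma'$ (since fans admit two possible apices and therefore weaker constraints), and consistently identifying the third vertex $p_3$ produced by the two analyses. Once $\Sigma_1$ and $\Sigma_2$ have been pinned down in each of the three conclusions, the final claim that $\Sigma_1\cup\Sigma_2$ consists of all triangles sharing an edge with $T^*=p_1p_2p_3$, with $T^*$ excluded precisely in~\eqref{item:nca-nofan}, follows by direct inspection of the star descriptions, using that $T^*\in\Sigma_1$ if and only if $p_3\in I_1$.
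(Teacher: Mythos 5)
Your skeleton matches the paper's own proof more closely than you might expect: the edge-intersection case is dispatched exactly as you describe, and in the vertex-intersection case your auxiliary triangle $T'=abd$ is precisely the paper's $C_3=qy_1x_2$ in Proposition~\ref{prop:nocommonapex}, with the same theta curves $\Theta_1=T_1\cup T'$ and $\Theta_2=T_2\cup T'$. But your four-way case analysis has a genuine gap: the claim that both thetas falling into Case~\eqref{case:commonapex} ``forces a common apex on $\Sigma_1$ and $\Sigma_2$'' is false, because having a common apex is not transitive. A fan admits two apex expressions ($p\{q\}I=qI\{p\}$), so the star $\Sigma'$ of $T'$ can be a fan with axis $r_1r_2$ sharing the apex $r_1$ with $\Sigma_1$ and the apex $r_2$ with $\Sigma_2$, while $\Sigma_1$ and $\Sigma_2$ share no apex at all. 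This is exactly Case~4 of the paper's proof of Proposition~\ref{prop:nocommonapex} --- the longest and most delicate part of the whole argument --- and it ends not in a contradiction but in conclusion~\eqref{item:nca-Sigma1fan}: only the subcase $r_1=p_1$, $r_2=p_2$ yields a strong link (via a triangle common to $\Sigma_1$ and $\Sigma'$); otherwise, taking $r_1\neq p_1$, one deduces $I_1=\{r_1\}$, shows $\Sigma'$ must be the fan $r_1\{r_2\}I_3$, identifies the proper star $\Sigma_4$ linked by the square $T_1\pm T'$, and applies Corollary~\ref{cor:disjointaxes} to the theta curve formed by that square and $T_2$, together with mutual orientation, to force $r_2=p_2$ and $O_2=\{p_1,r_1\}$. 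As written, your proof simply discards configurations that conclusions~\eqref{item:nca-Sigma1fan} and~\eqref{item:nca-Sigma2fan} exist to capture. The same fan ambiguity also undercuts your mixed case: when $\Theta_1$ satisfies~\eqref{case:commonapex} you cannot assume the shared apex of $\Sigma_1$ and $\Sigma'$ is $p_1$; the paper needs a separate argument there (using $p_1\in O_2$ from mutual orientation) to rule out the shared apex being a third vertex, which makes $\Sigma_1$ a fan.

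Two smaller points. Your last-resort fallback when no auxiliary triangle links $G$ is stated incorrectly: replacing \emph{both} $T_1$ and $T_2$ by the squares $T_1\pm T'$ and $T_2\pm T'$ does not produce a theta curve, since the shared vertex $a$ then has degree four in the union. Replacing just one suffices and is what the paper does: if $T'$ does not link $G$, the square $T_2\pm T'$ links $G$ in $\Sigma_2$, and its union with $T_1$ is a theta curve (cycles $T_1$, the square, and a $5$-cycle) whose linked stars have no common apex, so Case~\eqref{case:nocommonapex} of Theorem~\ref{thm:thetacurve} gives~\eqref{item:nca-nofan} immediately --- there is no need to cycle through $abe$, $acd$, $ace$ first. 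Your use of Theorem~\ref{thm:notdisjoint} with $G$ and $H$ swapped, your Case~A, and your closing inspection that $T^*\in\Sigma_1$ if and only if $p_3\in I_1$ are all fine.
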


\begin{remark}
\label{rem:nodisjointfans}
Note that if stars $\Sigma_1$ and $\Sigma_2$ satisfy one of~\eqref{item:nca-Sigma1fan}--\eqref{item:nca-nofan}, then at least one of them must be a proper star. Consequently, if triangles $T_1$ and $T_2$ in $H$ link fans in $G$ then the axes of the fans must intersect.
\end{remark}

Theorem~\ref{thm:nocommonapex} turns out to be easier to prove for $m\geq 6$ than for $m=5$.  However, for the sake of space, we present a single proof for $m\geq 5$. 
The proof breaks into two cases, according to the way in which $T_1$ and $T_2$ intersect. 
The case where they intersect in an edge was addressed by Theorem~\ref{thm:thetacurve}, and the case where they intersect in a single vertex is addressed by Proposition~\ref{prop:nocommonapex}.

\begin{prop}
\label{prop:nocommonapex}
Let $m$ and $n$ be positive integers with $m\geq 5$. Suppose that $G\cong K_m$ and $H\cong K_n$ are weakly linked graphs in $\real^3$. Let
$p_1,p_2$ be vertices of $G$ such that $p_1\neq p_2$, and let $C_1$ and $C_2$ be cycles in $H$ that intersect in a single vertex $q$, and link $G$ in mutually oriented stars $\Sigma_1=p_1O_1I_1$ and $\Sigma_2=p_2O_2I_2$, respectively. 
If $\Sigma_1$ and $\Sigma_2$ have no common apex, then 
they are described by one of conditions~\eqref{item:nca-Sigma1fan}--\eqref{item:nca-nofan}. 
\end{prop}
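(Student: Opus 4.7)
The plan is to work with the partition
\[
G \setminus \{p_1,p_2\} = U_{oo} \sqcup U_{oi} \sqcup U_{io} \sqcup U_{ii},
\]
where $U_{\epsilon\delta} = O_1^\epsilon \cap O_2^\delta$ (writing $O_i^o = O_i$ and $O_i^i = I_i$), and to translate both the no-common-apex hypothesis and the three desired conclusions into statements about which of the four sets are empty, singleton, or larger. The hypothesis rules out exactly the three degenerate configurations $O_1 = \{p_2\}$, $O_2 = \{p_1\}$, and $I_1 = I_2 = \{p\}$; equivalently $U_{oo} \cup U_{oi} \neq \emptyset$, $U_{oo} \cup U_{io} \neq \emptyset$, and we do not have $U_{oi} = U_{io} = \emptyset$ together with $|U_{ii}| = 1$. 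Each of \eqref{item:nca-Sigma1fan}--\eqref{item:nca-nofan} corresponds to a specific sizing of the $U_{\epsilon\delta}$ together with a distinguished third vertex $p_3$.

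The main technical step is a theta-curve construction in $H$. For each pair $(a,b)$ with $a$ a neighbour of $q$ in $C_1$ and $b$ a neighbour of $q$ in $C_2$, form the three internally disjoint arcs between $q$ and $a$ given by (i)~the edge $qa$, (ii)~the arc $C_1 \setminus \{qa\}$, and (iii)~the length-two arc $q \to b \to a$ (valid since $ab$ is an edge of $K_n$); the resulting theta curve $\Theta \subset H$ has cycles $C_1$, the triangle $qba$, and a third cycle $D_3$ of length $k$. Applying Theorem~\ref{thm:thetacurve} to $(\Theta, G)$: in case~\eqref{case:commonapex} the common apex must be compatible with $\Sigma_1$, hence equal to $p_1$ (or to the unique vertex of $I_1$ when $\Sigma_1$ is a fan); in case~\eqref{case:nocommonapex} the three cycles link $G$ in proper stars with three distinct apices of very rigid form. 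In either case the star of the triangle $qba$ in $G$ (which exists by Theorem~\ref{thm:star}) is sharply constrained.

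I would then iterate over the four choices $(a,b) \in \{a_1, a_{k-1}\} \times \{b_1, b_{\ell-1}\}$ and run the symmetric construction with the roles of $C_1$ and $C_2$ swapped (theta curve based at a neighbour of $q$ in $C_2$). Complementing these constraints with the homological identity
\[
C_1 + C_2 = C' + T
\]
in $H_1(\real^3 - G)$, where $C'$ is the simple cycle in $H$ bypassing $q$ via an edge $a_*b_* \in K_n$ and $T = qa_*b_*$ is the corresponding triangle in $H$, and using that weak linking forces $|\link{D}{C'}| \leq 1$ and $|\link{D}{T}| \leq 1$ for every cycle $D \subset G$, gives for every $D$ with $\link{D}{C_1} = \link{D}{C_2} = 1$ that $\link{D}{C'} = \link{D}{T} = 1$. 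Taken together these pin $\Sigma_1$ and $\Sigma_2$ down to exactly one of the patterns \eqref{item:nca-Sigma1fan}--\eqref{item:nca-nofan} and determine $p_3$.

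The hard part will be keeping the case analysis organised, as it splits naturally along several orthogonal axes: each $\Sigma_i$ being a fan or proper star; each of cases~\eqref{case:commonapex}/\eqref{case:nocommonapex} of Theorem~\ref{thm:thetacurve} occurring in each theta-curve application; and each of $U_{oo}, U_{oi}, U_{io}, U_{ii}$ being empty, a singleton, or larger. The choices of $(a,b)$ and of which of $C_1, C_2$ plays the primary role must be carefully matched to each sub-case to extract non-trivial constraints, and ruling out some of the configurations inconsistent with \eqref{item:nca-Sigma1fan}--\eqref{item:nca-nofan} may require combining information from several $(a,b)$ choices or alternating between the original and symmetric theta-curve constructions.
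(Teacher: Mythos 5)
Your framework is sound and is essentially the paper's own: after the paper's opening reduction to the case where $C_1$ and $C_2$ are triangles $qx_1y_1$ and $qx_2y_2$, your theta curve for the choice $(a,b)=(y_1,x_2)$ is exactly the paper's $\Theta_1=C_1\cup C_3$ with connecting triangle $C_3=qy_1x_2$, and your symmetric construction is $\Theta_2=C_2\cup C_3$. Your translation of the no-common-apex hypothesis into the three excluded configurations $O_1=\{p_2\}$, $O_2=\{p_1\}$, $I_1=I_2=\{p\}$ is correct, as is the observation that $[C_1]+[C_2]=[C']+[T]$ forces any cycle of $G$ linking both $C_1$ and $C_2$ positively to link both $C'$ and $T$ positively. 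But what you have written is a plan, not a proof: the assertion that these constraints ``pin $\Sigma_1$ and $\Sigma_2$ down to exactly one of the patterns \eqref{item:nca-Sigma1fan}--\eqref{item:nca-nofan}'' \emph{is} the proposition, and you explicitly defer it (``the hard part will be keeping the case analysis organised\dots''). The paper's proof consists precisely of that case analysis, organised by which case of Theorem~\ref{thm:thetacurve} holds for each of $\Theta_1$ and $\Theta_2$; working with general cycles and four $(a,b)$ choices instead of invoking the reduction to triangles only multiplies this bookkeeping.

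Two specific difficulties that your outline does not anticipate are where the substance lies. First, the connecting triangle (the paper's $C_3$, your $qba$) may fail to link $G$ at all, in which case each of your theta-curve applications returns only $\str{D_3}=-\Sigma_1$ (resp.\ $-\Sigma_2$) and yields no relation between $\Sigma_1$ and $\Sigma_2$, while your homological identity alone does not pin down~\eqref{item:nca-nofan}; the paper handles this by passing to $C_2'=C_2+C_3$, which links $G$ in $\Sigma_2$, and applying Theorem~\ref{thm:thetacurve} to the different theta curve $C_0\cup C_1\cup C_2'$ to land in case~\eqref{case:nocommonapex}. Second, and harder, is the case in which both $\Theta_1$ and $\Theta_2$ fall into case~\eqref{case:commonapex}, so that neither theta curve directly constrains the pair $(\Sigma_1,\Sigma_2)$: there the paper must rule out $\Sigma_3$ being a fan with axis $\pm p_1p_2$ (via a strong link of $C_1+C_3$ with the triangle $p_1p_2p_3$), show $\Sigma_1$ is forced to be the fan $p_3\{p_1\}O_1$, introduce the auxiliary star $\Sigma_4=\str{C_1+C_3}$, and invoke Corollary~\ref{cor:disjointaxes} on the theta curve $(C_1+C_3)\cup C_2$ to force $r_2=p_2$, before a final strong-link argument gives $O_2=\{p_1,p_3\}$ exactly and hence~\eqref{item:nca-Sigma1fan}. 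None of your listed ingredients (the $U_{\epsilon\delta}$ bookkeeping, the four $(a,b)$ choices, the identity $C_1+C_2=C'+T$) substitutes for these steps. So the gap is concrete: you have chosen the right tools, matching the paper's, but the decisive case analysis that constitutes the proof is missing.
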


\begin{figure}
\begin{center}
\setlength{\radius}{1.5cm}
\begin{tikzpicture}[vertex/.style={circle,draw,minimum size = 2mm,inner sep=0pt},thick]
\foreach \i/\v in {0/q,1/{y_2},2/{x_2},3/{y_1},4/{x_1}}
   {\node (\i) at (-90+\i*72:\radius) [vertex] {};
    \node at (-90+\i*72:1.2*\radius) {$\v$};}
\foreach \i in {1,2,3,4}
       \draw (0) -- (\i);
\foreach \i/\j in {1/2,2/3,3/4}
     \draw (\i) -- (\j);
\foreach \t/\r/\i in {1/0.6/2,2.5/0.2/3,4/0.6/1,2.5/0.95/0}
       \node at (-90+\t*72:\r*\radius) {$C_\i$};
\end{tikzpicture}
\caption{The cycles $C_1$, $C_2$ and $C_3$ in the proof of Proposition~\ref{prop:nocommonapex}.}
\label{fig:nocommonapex}
\end{center}
\end{figure}
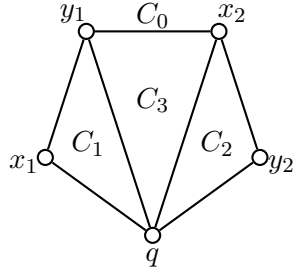

\begin{proof}
Without loss of generality we assume that $C_1$ and $C_2$ are triangles. Let $C_1=qx_1y_1$, $C_2=qx_2y_2$, and set $C_3=qy_1x_2$, $C_0=qy_2x_2y_1x_1$, so that 
\[
\sum_{i=0}^3 [C_i]=0
\]
in $H_1(H)$ (see Figure~\ref{fig:nocommonapex}). In addition, let $\Theta_1,\Theta_2$ be the theta curves $\Theta_1=C_1\cup C_3$, $\Theta_2=C_2\cup C_3$.

If $C_3$ does not link $G$, then $C_2'=C_2+C_3=qy_1x_2y_2$ links $G$ in $\Sigma_2$. The cycles $C_0$, $C_1$ and $C_2'$ together form a theta curve, and since $\Sigma_1$ and $\Sigma_2$ have no common apex we are in Case~\eqref{case:nocommonapex} of Theorem~\ref{thm:thetacurve}. Since also $\Sigma_1$ and $\Sigma_2$ are mutually oriented, it follows that Case~\eqref{item:nca-nofan} above holds.

Suppose then that $C_3$ links $G$ in a star $\Sigma_3$. We consider cases, according to which case of Theorem~\ref{thm:thetacurve} is satisfied by each of $\Theta_1$ and $\Theta_2$. 

\subsection*{Case 1: Both $\Theta_1$ and $\Theta_2$ satisfy  Case~\eqref{case:nocommonapex} of Theorem~\ref{thm:thetacurve}}

Then $\Sigma_3$ has no common apex with $\Sigma_1$ or $\Sigma_2$. Let $\Sigma_3=p_3O_3I_3$, where $p_3\neq p_1,p_2$.

By~\eqref{case:nocommonapex} applied to $\Theta_1$ there are two possible ways in which $\Sigma_1$ and $\Sigma_3$ can meet: either $O_1=O_3$, $p_1\in I_3$ and $p_3\in I_1$; or $I_1=I_3$, $p_1\in O_3$ and $p_3\in O_1$. Likewise, by~\eqref{case:nocommonapex} applied to $\Theta_2$ there are two possible ways in which $\Sigma_2$ and $\Sigma_3$ can meet: either $O_2=O_3$, $p_2\in I_3$ and $p_3\in I_2$; or $I_2=I_3$, $p_2\in O_3$ and $p_3\in O_2$. If $O_1=O_3$ and $p_1\in I_3$ then (since $p_1\in O_2$) both $O_2=O_3$ and $I_2=I_3$ are impossible, so it must be the case that $I_1=I_3$ and $p_1\in O_3$. By the same argument it must also be the case that $I_2=I_3$ and $p_2\in O_3$. Then by~\eqref{case:nocommonapex} it follows that 
\begin{align*}
O_1 &= \{p_2,p_3\}, &
O_2 &= \{p_1,p_3\}, &
O_3 &= \{p_1,p_2\},
\end{align*}
and $I_1=I_2=I_3=G-\{p_1,p_2,p_3\}$. This shows that~\eqref{item:nca-nofan} above holds.

\subsection*{Case 2: $\Theta_1$ satisfies Case~\eqref{case:commonapex} of Theorem~\ref{thm:thetacurve}, $\Theta_2$ satisfies Case~\eqref{case:nocommonapex}}

Then $\Sigma_1$ and $\Sigma_3$ have a common apex, and $\Sigma_2$ and $\Sigma_3$ are proper stars with no common apex.

Suppose first that $p_1$ cannot be chosen as the common apex of $\Sigma_1$ and $\Sigma_3$. Then we can write $\Sigma_1=p_3O_1'I_1'$, $\Sigma_3=p_3O_3I_3$, for some $p_3\neq p_1,p_2$. Since either $p_1$ or $p_3$ can be chosen as the apex of $\Sigma_1$, this star must be a fan with axis $\pm p_1p_3$ and therefore one of $O_1$ and $I_1$ must be equal to $\{p_3\}$. But $p_2\in O_1$ by hypothesis, so it must be $I_1$ that is equal to $\{p_3\}$; that is, $\Sigma_1=p_1O_1\{p_3\}=p_3\{p_1\}O_1$ and we have $O_1'=\{p_1\}$, $I_1'=O_1$. 

The star $\Sigma_3$ is proper, so $|I_3|\geq 2$ and hence $I_3$ contains some $r\neq p_1,p_3$. Then $r$ must belong to $I_1'=O_1=G-\{p_1,p_3\}$ also, which implies $I_1'\cap I_3\neq \emptyset$. It now follows by~\eqref{case:commonapex} applied to $\Theta_1$ that $O_1'\cap O_3$ must be empty, and therefore $p_1\in I_3$. Now recall that $p_1\in O_2$. This means that $O_2\neq O_3$ and $I_2\neq I_3$, in contradiction with Case~\eqref{case:nocommonapex}. We conclude that $p_1$ must be the common apex of $\Sigma_1$ and $\Sigma_3$.

Accordingly, let $\Sigma_3=p_1O_3I_3$ and consider $\Theta_2=C_2\cup C_3$. Since the apex $p_1$ of $\Sigma_3$ belongs to $O_2$, by~\eqref{case:nocommonapex} we must have $I_2=I_3$ and $O_2=\{p_1,p_3\}$, $O_3=\{p_2,p_3\}$ for some $p_3\in G$. Then since $O_1\cap O_3$ contains $p_2$ it is nonempty, so by~\eqref{case:commonapex} applied to $\Theta_1$ we must have $I_1\cap I_3=\emptyset$. But $I_3=G-\{p_1,p_2,p_3\}$ and $p_1,p_2\notin I_1$, so the only possibility is  $I_1=\{p_3\}$. We conclude that~\eqref{item:nca-Sigma1fan} holds.

\subsection*{Case 3: $\Theta_1$ satisfies Case~\eqref{case:nocommonapex} of Theorem~\ref{thm:thetacurve}, $\Theta_2$ satisfies Case~\eqref{case:commonapex}}

Reversing the roles of $\Theta_1$ and $\Theta_2$ in Case~2, we conclude that $\Sigma_1$ and $\Sigma_2$ satisfy~\eqref{item:nca-Sigma2fan}.

\subsection*{Case 4: Both $\Theta_1$ and $\Theta_2$ satisfy  Case~\eqref{case:commonapex} of Theorem~\ref{thm:thetacurve}}

Then $\Sigma_1$ and $\Sigma_2$ have a common apex with $\Sigma_3$. 
Since they do not share a common apex with each other, there must be vertices $r_1,r_2$ in $G$ such that $\Sigma_3$ is a fan with axis $\pm r_1r_2$, and $r_i$ can be chosen as the apex of $\Sigma_i$ for $i=1,2$.

Suppose first that $r_i=p_i$ for $i=1,2$. 
Without loss of generality we may assume that $\Sigma_3$ has axis $r_1r_2=p_1p_2$; that is, $\Sigma_3=p_1\{p_2\}I_3$, where $I_3=G-\{p_1,p_2\}$. Choose $p_3\in I_1$. Then $p_3\in I_3$ also, so triangle $T=p_1p_2p_3$ belongs to both $\Sigma_1$ and $\Sigma_3$. This means that $C_1'=C_1+C_3$ strongly links $T$, a contradiction.

It must therefore be the case that $r_i\neq p_i$ for some $i$.
Suppose without loss of generality that $r_1\neq p_1$. Let $p_3=r_1$, and let $\Sigma_1=p_3O_1'I_1'$, $\Sigma_3=p_3O_3I_3$. Then as in the second paragraph of Case~2 above we must have $\Sigma_1=p_3\{p_1\}O_1$; that is, $I_1=\{p_3\}$, $O_1'=\{p_1\}$, $I_1'=O_1=G-\{p_1,p_3\}$. 

By~\eqref{case:commonapex} applied to $\Theta_1$ at least one of $O_1'\cap O_3$ and $I_1'\cap I_3$ must be empty. The star $\Sigma_3$ is a fan with axis $\pm p_3r_2$, so one of $O_3$ and $I_3$  must equal $\{r_2\}$. If $I_3$ were equal to $\{r_2\}$ then 
(since $p_1$, $p_3$ and $r_2$ are distinct) we would have $r_2\in I_1'\cap I_3$ and $p_1\in O_1'\cap O_3$, contradicting the fact that at least one of
$O_1'\cap O_3$ and $I_1'\cap I_3$ must be empty. So it must instead be the case that $O_3=\{r_2\}$, and therefore $\Sigma_3=p_3\{r_2\}I_3$ for $I_3=G-\{r_2,p_3\}$. It now follows from~\eqref{case:commonapex} that $C_1'=C_1+C_3$ links the proper star $\Sigma_4=p_3\{p_1,r_2\}I_4$, for $I_4=G-\{p_1,p_3,r_2\}$. 

We now consider the theta curve $\Theta=C_1'\cup C_2$. If it were the case that $r_2\neq p_2$ then $\Sigma_2$ would be a fan with axis $\pm p_2r_2$ disjoint from the apex $p_3$ of $\Sigma_4$, which is impossible by Corollary~\ref{cor:disjointaxes} applied to $\Theta$. So we must instead have $r_2=p_2$, giving $\Sigma_3=p_3\{p_2\}I_3$, $\Sigma_4=p_3\{p_1,p_2\}I$ for $I=G-\{p_1,p_2,p_3\}$. 

Observe now that $C_3$ positively links the triangle $T=p_3p_2p_1$. Recall that $\Sigma_2=p_2O_2I_2$, with $p_1\in O_2$. If it were the case that $p_3\in I_2$ then $C_2$ would also positively link $T$, and then $C_2'=C_2+C_3$ would strongly link $T$. We must therefore have $p_3\in O_2$ instead, and hence $\{p_1,p_3\}\subseteq O_2$. It follows that $\Sigma_2$ and $\Sigma_4$ cannot have a common apex, so by Theorem~\ref{thm:thetacurve} applied to $\Theta$, $O_2$ must exactly equal $\{p_1,p_3\}$. We already have $I_1=\{p_3\}$, so this shows that~\eqref{item:nca-Sigma1fan} holds. This completes the proof.
\end{proof}

\begin{proof}[Proof of Theorem~\ref{thm:nocommonapex}]
Suppose that $\Sigma_1$ and $\Sigma_2$ have no common apex, and re-orient $T_1$ and $T_2$ (if necessary) so that $\Sigma_1$ and $\Sigma_2$ are mutually oriented. 
By Theorem~\ref{thm:notdisjoint} the triangles $T_1,T_2$ must intersect. 
If $T_1$ and $T_2$ meet in an edge, then $T_1\cup T_2$ forms a theta curve so $\Sigma_1$ and $\Sigma_2$ are described by Theorem~\ref{thm:thetacurve}. Since they have no common apex they must satisfy condition~\eqref{case:nocommonapex}, which co-incides with condition~\eqref{item:nca-nofan}.
Otherwise, $T_1$ and $T_2$ meet in a single vertex and the result follows from Proposition~\ref{prop:nocommonapex}.
\end{proof}

\section{Our main results}
\label{sec:KmKn}

We are now ready to complete our characterisation of weakly linked embeddings of $G\cong K_m$ and $H\cong K_n$. The first step is to prove the common vertex or common triangle dichotomy of Theorem~\ref{thm:dichotomy}, restated here as Theorem~\ref{thm:common-vertex-or-triangle}. This leads to two cases: one of $G$ and $H$ contains a common triangle (see Section~\ref{sec:commontriangle}), or both contain a common vertex (see Sections~\ref{sec:commonvertex} and~\ref{sec:commonvertex-realisation}).  In each case, we first determine the possible patterns of linking numbers; exhibit embeddings realising them; and then prove that our embeddings are weakly linked.

\begin{thm}
\label{thm:common-vertex-or-triangle}
Let $m\geq 5$ and $n\geq 4$. Suppose that $G\cong K_m$ and $H\cong K_n$ are weakly linked graphs in $\real^3$. If there is no vertex of $G$ common to all triangles of $G$ linking $H$, 
then there is a triangle $T^*$ in $G$ such that a triangle $T\neq T^*$ of $G$ links $H$ if and only if it shares an edge with $T^*$.
\end{thm}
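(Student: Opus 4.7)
The plan is to analyse the family of stars $\mathcal{S} = \{\Sigma(T) : T\in\linkedtriangles{H}\}$ in $G$, where $\linkedtriangles{H}$ denotes the triangles of $H$ that link $G$ and $\Sigma(T)$ is the star provided by Theorem~\ref{thm:star}. A triangle of $G$ lies in $\linkedtriangles{G}$ (the triangles of $G$ linking $H$) exactly when it appears in some $\Sigma(T)$. I split into two cases according to whether some pair of stars in $\mathcal{S}$ fails to share a common apex.

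\emph{Case B: every pair of stars shares a common apex.} If any $\Sigma(T)$ were proper its apex would be unique, hence common to every other star, and therefore a vertex lying in every triangle of $\linkedtriangles{G}$---contradicting the hypothesis. So every star in $\mathcal{S}$ is a fan. The axes pairwise share a vertex (the common apex is an endpoint of each axis) but have no common vertex (else $\linkedtriangles{G}$ would). A short Helly-type argument for edges then forces the axes to be edges of a single triangle $T^* = p_1p_2p_3$. Since the full fan on an axis $p_ip_j$ contributes every triangle of $G$ through the edge $p_ip_j$, if only two of the three axes occurred then $\linkedtriangles{G}$ would lie in the star of triangles through the shared endpoint, again contradicting the hypothesis. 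So all three axes occur, and $\linkedtriangles{G}$ equals the set of triangles of $G$ sharing an edge with $T^*$, as required.

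\emph{Case A: some pair $\Sigma_1,\Sigma_2$ has no common apex.} Apply Theorem~\ref{thm:nocommonapex} to obtain $T^* = p_1p_2p_3$ with $\Sigma_1\cup\Sigma_2$ equal to the triangles sharing an edge with $T^*$, with the possible exception of $T^*$ itself (in case~\eqref{item:nca-nofan}). This already places every triangle sharing an edge with $T^*$ in $\linkedtriangles{G}$, except possibly $T^*$; the remaining task is to show no other triangle of $G$ links $H$. Suppose for contradiction that $T_{G,3}\in\linkedtriangles{G}$ with $T_{G,3}\neq T^*$ and $T_{G,3}$ sharing no edge with $T^*$, so $T_{G,3}$ meets $T^*$ in at most one vertex, and let $T_3\in\linkedtriangles{H}$ satisfy $T_{G,3}\in\Sigma_3:=\Sigma(T_3)$.

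Ruling out such a $T_{G,3}$ is the delicate step. I first use Theorem~\ref{thm:notdisjoint}: $T_{G,3}$ must share a vertex with every $p_ip_jr\in\linkedtriangles{G}$ for $r\in G\setminus\{p_1,p_2,p_3\}$. A vertex count then forces $T_{G,3}$ to be empty for $m\geq 7$, equal to $G\setminus\{p_1,p_2,p_3\}$ for $m=6$, and of the form $p_ir_1r_2$ (with $\{r_1,r_2\}=G\setminus\{p_1,p_2,p_3\}$) for $m=5$. To eliminate the residual small-$m$ candidates I examine the pairs $(\Sigma_3,\Sigma_1)$ and $(\Sigma_3,\Sigma_2)$. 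If $\Sigma_3$ shares a common apex with both, then---since the apex sets of $\Sigma_1$ and $\Sigma_2$ are disjoint subsets of $\{p_1,p_2,p_3\}$ in each case of Theorem~\ref{thm:nocommonapex}---$\Sigma_3$ has at least two distinct apex vertices in $\{p_1,p_2,p_3\}$ and is therefore a fan whose axis is an edge of $T^*$, placing $T_{G,3}$ on an edge of $T^*$, contradiction. Otherwise some pair $(\Sigma_j,\Sigma_3)$ has no common apex, and a second application of Theorem~\ref{thm:nocommonapex} yields a triangle $T^{**}$ with $\Sigma_j\cup\Sigma_3$ equal to the triangles sharing an edge with $T^{**}$. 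For $m\geq 6$ the edges of $\Sigma_j$'s triangles overdetermine $T^{**}$ and force $T^{**}=T^*$, immediately contradicting the assumption on $T_{G,3}$; for $m=5$ an alternative such as $T^{**}=p_jr_1r_2$ can occur, and in that sub-case I construct a 4-cycle $C$ in $G$ (for instance $C=p_1p_3r_1r_2$) satisfying $\link{C}{T_1}\neq 0$ and $\link{C}{T_3}\neq 0$, and invoke Lemma~\ref{lem:tripleimpliesstrong} to produce a cycle in $H$ strongly linking $C$, contradicting weak linking. The main obstacle is precisely this small-$m$ analysis, where Theorem~\ref{thm:notdisjoint} alone is too weak and the interplay between Theorem~\ref{thm:nocommonapex} and Lemma~\ref{lem:tripleimpliesstrong} becomes essential.
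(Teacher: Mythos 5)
Your overall architecture is sound and closely parallels the paper's: your Case B is essentially the paper's ``no proper star'' case, and in Case A your use of Theorem~\ref{thm:nocommonapex} plus the vertex count via Theorem~\ref{thm:notdisjoint} correctly disposes of $m\geq 7$, and (with the minor repair that $\Sigma_j$ must be chosen to be the \emph{proper} member of $\{\Sigma_1,\Sigma_2\}$, since a fan $\Sigma_j$ does not overdetermine $T^{**}$) also of $m=6$. The genuine gap is exactly at the spot you flag as the crux, the residual $m=5$ sub-case. Your proposed tool there is Lemma~\ref{lem:tripleimpliesstrong} applied to $L=C=p_1p_3r_1r_2$ with $Z=T_1$ and $W=T_3$. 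But that lemma requires $L\cup Z\cup W$ to be a \emph{$3$-component link}, i.e.\ $Z$ and $W$ must be disjoint simple closed curves; and $T_1,T_3$ are triangles of $H$ that both link $G$, so by Theorem~\ref{thm:notdisjoint} (with the roles of $G$ and $H$ exchanged) they necessarily share a vertex in any weakly linked pair. The hypothesis of Lemma~\ref{lem:tripleimpliesstrong} is therefore \emph{never} satisfiable in your situation, and no strong link is produced: a cycle linking two intersecting triangles can perfectly well link $H$ in a star (this is precisely the configuration analysed in Lemma~\ref{lem:K5case}), so there is no contradiction to extract.

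Moreover, the obstruction is not cosmetic: the worst residual configuration for $m=5$, the fan with axis $r_1r_2$ (the paper's $\Sigma_{45}$ with axis $p_4p_5$), is \emph{pairwise consistent} with each of $\Sigma_1$ and $\Sigma_2$ --- one checks that $(-\Sigma_{45},-\Sigma_i)$ satisfies condition~\eqref{item:nca-Sigma1fan} of Theorem~\ref{thm:nocommonapex} with $T^{**}=p_ip_4p_5$ --- so no second application of Theorem~\ref{thm:nocommonapex} to either pair, nor any $4$-cycle trick fed into Lemma~\ref{lem:tripleimpliesstrong}, can eliminate it. The contradiction is inherently three-way, and the paper needs a different idea here: all four triangles of the $K_4$ subgraph $\langle p_1,p_2,p_4,p_5\rangle$ link $H$, so case~\eqref{case:K4Kn-commonvertex} of Theorem~\ref{thm:K4Kn} yields a vertex $q$ of $H$ and pairwise disjoint nonempty sets $J_1,J_2,J_4,J_5$, whence a single triangle $qx_2x_5$ of $H$ links both $C_2=p_5p_4p_1$ and $C_5=p_4p_2p_1$ --- impossible, since no star in the admissible list contains both. (Your other residual $m=5$ candidates, the proper stars of the form $p_4\{p_2,p_3\}\{p_1,p_5\}$, \emph{can} be killed by examining both pairs as you suggest, matching the paper's list-intersection argument; it is only the fan $\Sigma_{45}$ for which your method fails.) Until the Lemma~\ref{lem:tripleimpliesstrong} step is replaced by an argument of this kind, the proof is incomplete for $m=5$.
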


\begin{proof}
Let $\linkedtriangles{G}$ be the set of oriented triangles in $G$ that link $H$, and let $\linkedtriangles{H}$ be the set of oriented triangles in $H$ that link $G$. By Theorem~\ref{thm:star} each triangle $T$ in $\linkedtriangles{H}$ links $G$ in a star $\str{T}$, and we let
\[
\mathcal{S}_G = \{\str{T}: T\in\linkedtriangles{H}\}.
\]
We claim that 
\[
\linkedtriangles{G} = \bigcup_{\Sigma\in\mathcal{S}_G} \Sigma. 
\]
Indeed, any triangle $S\in\str{T}\subseteq\mathcal{S}_G$ links the triangle $T$ of $H$, so belongs to $\linkedtriangles{G}$; and conversely, 
any triangle $S\in\linkedtriangles{G}$ must positively link some triangle $T\in\linkedtriangles{H}$ 
(for example, by subdividing a cycle $D$ in $H$ linking $S$, or because $S$ must link $H$ in a star $\str{S}$), and consequently belongs to $\str{T}$.

First suppose that there is no proper star in $\mathcal{S}_G$. Then every star in $\mathcal{S}_G$ is a fan, and by Remark~\ref{rem:nodisjointfans} the axes of any two such fans must intersect. Choose vertices $p_1$, $p_2$ in $G$ such that the fan with axis $p_1p_2$ belongs to $\mathcal{S}_G$, and note that the axis of any other fan in $\mathcal{S}_G$ must contain either $p_1$ or $p_2$. 
Since $p_1$ is not common to all triangles linking $H$, there must be 
a vertex $p_3$ of $G$ such that the fan with axis $p_2p_3$ belongs to $\mathcal{S}_G$; and since also $p_2$ is not common to all triangles linking $H$, there must be a vertex $p_4$ of $G$ such that the fan with axis $p_4p_1$ belongs to $\mathcal{S}_G$. But then 
the fan axes $p_2p_3$ and $p_4p_1$ are disjoint unless $p_3=p_4$. It follows that we must have $p_3=p_4$, and then $\mathcal{S}_G$ contains precisely the fans with axes $\pm p_1p_2$, $\pm p_2p_3$ and $\pm p_3p_1$. The triangle $T^*=p_1p_2p_3$ therefore satisfies the conclusion of the theorem.

Now suppose that there is $T_1\in\linkedtriangles{H}$ such that $\str{T_1}=p_1O_1I_1$ is a proper star. By assumption $p_1$ is not common to all triangles in $G$ linking $H$, so there is a triangle $T_2\in\linkedtriangles{H}$ such that $\str{T_2}=p_2O_2I_2$ has no common apex with $\str{T_1}$. Without loss of generality we may assume that $\str{T_1}$ and $\str{T_2}$ are mutually oriented, and then by Theorem~\ref{thm:nocommonapex} there is a vertex $p_3$ of $G$ such that $O_1=\{p_2,p_3\}$, and either $O_2=\{p_1,p_3\}$, or $\str{T_2}$ is a fan with axis $p_3p_2$. Let $T^*=p_1p_2p_3$, $I^*=G-\{p_1,p_2,p_3\}$, and for $i,j\in\{1,2,3\}$ define $I_{ij}=G-\{p_i,p_j\}$. 
We claim that, up to orientation, every star in $\mathcal{S}_G$ is equal to one of 
\begin{align*}
\Sigma_1 &= p_1\{p_2,p_3\}I^*, & 
\Sigma_2&=p_2\{p_3,p_1\}I^*, & 
\Sigma_3&=p_3\{p_1,p_2\}I^*, \\
\Sigma_{12} &= p_1\{p_2\}I_{12}, &
\Sigma_{23} &= p_2\{p_3\}I_{23}, &
\Sigma_{31} &=  p_3\{p_1\}I_{31}.
\end{align*}
It would then follow that $T^*$ satisfies the conclusion of the theorem. Note that under these conditions $T^*$ links $H$ if and only if one of the stars $\Sigma_{ij}$ belongs to $\mathcal{S}_G$.

So far we have $\str{T_1}$ equal to $\Sigma_1$, and $\str{T_2}$ equal to either $\Sigma_2$ or $-\Sigma_{23}$. 
The case $m\geq6$ is simpler than the case $m=5$, so we will assume for now that $m\geq6$ and address the case $m=5$ later.
Under the assumption $m\geq6$ we have $|I^*|\geq 3$.
If $\Sigma=pOI\in\mathcal{S}_G$ is a proper star with $p\neq p_1$, then $\Sigma$ must be one of $\pm\Sigma_2,\pm\Sigma_3$, by Theorem~\ref{thm:nocommonapex} applied to $\Sigma$ and $\str{T_1}=\Sigma_1$. In addition, if $\Sigma=pOI\in\mathcal{S}_G$ is a proper star with $p=p_1$, then $\Sigma$ must be $\pm\Sigma_1$, by Theorem~\ref{thm:nocommonapex} applied to $\Sigma$ and $\str{T_2}$, regardless of whether $\str{T_2}$ is equal to $\Sigma_2$ or $-\Sigma_{23}$. We conclude that, up to orientation, when $m\geq6$ the only proper stars that can belong to $\mathcal{S}_G$ are $\Sigma_1$, $\Sigma_2$ and $\Sigma_3$. 

Still assuming $m\geq6$, if $\Sigma\in\mathcal{S}_G$ is a fan with axis $pq$ disjoint from $p_1$ then by Theorem~\ref{thm:nocommonapex} we must have $\Sigma=\pm\Sigma_{23}$. The axis of any other fan in $\mathcal{S}_G$ must therefore meet $p_1$. Regardless of whether $\str{T_2}$ is equal to $\Sigma_2$ or $-\Sigma_{23}$, up to orientation the only other fan axes possible are $p_1p_2$ and $p_3p_1$, giving us $\Sigma_{12}$ or $\Sigma_{31}$: if $\Sigma_2\in\mathcal{S}_G$ then $p_3p_1$ is the only fan axis disjoint from $p_2$ satisfying Theorem~\ref{thm:nocommonapex} with respect to $\Sigma_2$, and $p_1p_2$ is the only axis meeting both $p_1$ and $p_2$; while if $-\Sigma_{23}\in\mathcal{S}_G$ then any axis must meet both $p_1$ and $p_2p_3$. Thus, the only possible stars in $\mathcal{S}_G$ are those listed above, and the theorem is proved for $m\geq6$.

We turn now to the case $m=5$. Then $|I^*|=2$, and we let $I^*=\{p_4,p_5\}$. The additional difficulty that arises in this case is that the fan with axis $p_4p_5$ and the proper stars $p_4\{p_2,p_3\}\{p_1,p_5\}$ and $p_5\{p_2,p_3\}\{p_1,p_4\}$ also satisfy Theorem~\ref{thm:nocommonapex} applied to $\Sigma_1$.

Suppose first that every proper star in $\mathcal{S}_G$ has apex $p_1$. Then $\str{T_2}$ is equal to $-\Sigma_{23}$. The only proper stars with apex $p_1$ that satisfy Theorem~\ref{thm:nocommonapex} with respect to $\Sigma_{23}$ are $\pm\Sigma_1$, so there can be no other proper star in $\mathcal{S}_G$. The axis of any other fan must meet $p_2p_3$; by Theorem~\ref{thm:nocommonapex} applied to $\Sigma_{1}$ the only possibilities are $\pm\Sigma_{12}$ and $\pm\Sigma_{31}$. Thus $T^*$ satisfies the required conditions.

Suppose finally then that there is a proper star in $\mathcal{S}_H$ with apex not equal to $p_1$. We could have chosen this star as $\str{T_2}$, so without loss of generality we may assume that $\str{T_2}=\Sigma_2$.
Any other proper star in $\mathcal{S}_G$ has no common apex with at least one of $\Sigma_1$ and $\Sigma_2$, and so must satisfy 
Theorem~\ref{thm:nocommonapex} with respect to one or both of $\Sigma_1$ and $\Sigma_2$. Up to orientation, the stars that are compatible with $\Sigma_1$ are
\begin{align*}
&& p_2|p_1p_3|p_4p_5, && p_4|p_2p_3|p_1p_5, \\
&& p_3|p_1p_2|p_4p_5, && p_5|p_2p_3|p_1p_4;
\end{align*}
while those that are compatible with $\Sigma_2$ are
\begin{align*}
&& p_1|p_2p_3|p_4p_5, && p_4|p_1p_3|p_2p_5, \\
&& p_3|p_1p_2|p_4p_5, && p_5|p_1p_3|p_2p_4.
\end{align*}
The only star that appears on both lists is $p_3\{p_1,p_2\}\{p_4,p_5\}=\Sigma_3$,
so we conclude that up to orientation the only proper stars that can belong to $\mathcal{S}_G$ are $\Sigma_1$, $\Sigma_2$ and $\Sigma_3$. 

By Theorem~\ref{thm:nocommonapex} applied to each of $\Sigma_1$ and $\Sigma_2$, if there is a fan other than $\pm\Sigma_{12}$, $\pm\Sigma_{23}$ and  $\pm\Sigma_{31}$ in $\mathcal{S}_G$ then it must have axis $\pm p_4p_5$. So suppose that there is a triangle in $H$ linking the fan $\Sigma_{45}=p_4|p_5|p_1p_2p_3$. Then this is in fact the only fan in $\mathcal{S}_G$, because $\Sigma_{12}$, $\Sigma_{23}$ and $\Sigma_{31}$ all have axes disjoint from $p_4p_5$. So up to orientation, the only stars that can belong to $\mathcal{S}_G$ are $\Sigma_i$ for $i=1,2,3$ and $\Sigma_{45}$. We show that this is impossible.

Label the triangles of the $K_4$ subgraph $K=\langle p_1,p_2,p_4, p_5\rangle$ such that
\begin{align*}
C_1 &= p_2p_4p_5, & C_2 &= p_5p_4p_1, \\
C_4 &= p_1p_2p_5, & C_5 &= p_4p_2p_1.
\end{align*}
Observe that each of these triangles belongs to at least one of $\pm\Sigma_1$, $\pm\Sigma_2$ and $\pm\Sigma_{45}$, and so links $H$. 
Since every triangle of $K$ links $H$ we must be in case~\eqref{case:K4Kn-commonvertex} of Theorem~\ref{thm:K4Kn}. It follows that there is a vertex $q$ of $H$, a sign $\eps\in\{\pm1\}$, and pairwise disjoint subsets $J_1,J_2,J_4,J_5$ of $H-\{q\}$ such that $C_i$ links $H$ in the star $\eps qP_iJ_i$ for $i=1,2,4,5$, where 
\[
P_i = H-\{q\}-J_i.
\]
Moreover, $J_i$ must be nonempty for each $i$, because otherwise $C_i$ does not link $H$. We may therefore choose $x_i\in J_i$ for $i=2,5$, to get a triangle $qx_2x_5$ in $H$ that links both $C_2$ and $C_5$ in $K$. But this is impossible, because none of the stars that can belong to $\mathcal{S}_G$ contains both of these triangles, so no triangle in $H$ can link both $C_2$ and $C_5$. It follows that the fan with axis $p_4p_5$ cannot belong to $\mathcal{S}_G$, and the theorem is proved.
\end{proof}

\subsection{Embeddings with a common triangle}
\label{sec:commontriangle}

In this section we analyse the case where at least one of $G$ and $H$ contains no vertex common to all triangles linking the other. Without loss of generality we may assume that this is $G$, and then by Theorem~\ref{thm:common-vertex-or-triangle}
there is a triangle $T^*$ in $G$ such that a triangle $T\neq T^*$ in $G$ links $H$ if and only if $T$ shares an edge with $T^*$. 

Theorem~\ref{thm:KmKn-triangle} shows that there are two possible patterns of linking numbers, according to whether or not $H$ contains a vertex common to all triangles linking $G$. We exhibit embeddings realising these in Figure~\ref{fig:KmKn}, and then prove that our embeddings are weakly linked in Theorem~\ref{thm:KmKn-triangle-weak}.

\begin{thm}
\label{thm:KmKn-triangle}
Let $m,n\geq 5$, and let $G\cong K_m$ and $H\cong K_n$ be weakly linked graphs in $\real^3$. Suppose that there is a triangle $T^*=p_1p_2p_3$ in $G$ such that a triangle $T\neq T^*$ in $G$ links $H$ if and only if $T$ shares an edge with $T^*$.
Let $X=G-\{p_1,p_2,p_3\}$, and for each $x\in X$ let
\begin{align*}
T_0(x) &= T^*=p_1p_2p_3, &  T_1(x) &= p_3p_2x, & T_2(x) &= xp_1p_3, & T_3(x) &= p_2p_1x.
\end{align*}
Then exactly one of the following holds:
\begin{enumerate}
\renewcommand{\labelenumi}{(D\arabic{enumi})}
\renewcommand{\theenumi}{D\arabic{enumi}}
\item
\label{case:KmKn-commonvertex}
There is a vertex $q$ of $H$ common to all triangles of $H$ linking $G$. Then there are pairwise disjoint sets $I_0$, $I_1$, $I_2$, $I_3$ such that $I_0\cup I_1\cup I_2\cup I_3=H-\{q\}$, and after reversing the orientation of $\real^3$ (if necessary), for each $x\in X$ and $0\leq i\leq 4$ the triangle $T_i(x)$ links $H$ in the star $qO_iI_i$, where
\[
O_i = H-\{q\}-I_i.
\]
Moreover, $I_i$ is nonempty for $1\leq i\leq 3$, and $I_0$ is nonempty if and only if $T^*$ links $H$. 
\item\label{case:KmKn-nocommonvertex}
There is no vertex of $H$ common to all triangles of $H$ linking $G$. Then $T^*$ does not link $H$, and there is a triangle $U^*=q_1q_2q_3$ of $H$ such that
a triangle $U$ of $H$ links $G$ if and only if $U$ shares exactly one edge with $U^*$. Let $Y=H-\{q_1,q_2,q_3\}$, and for each $y\in Y$ let
\begin{align*}
U_1(y) &= q_3q_2y, & U_2(y) &= yq_1q_3, & U_3(y) &= q_2q_1y.
\end{align*}
 Then after relabelling the $p_i$, $q_i$ and reversing orientation of $\real^3$ (if necessary), 
\begin{enumerate}
\item
\label{part:KmKn-nocommonvertex-G}
for each $x\in X$ the triangles $T_1(x),T_2(x),T_3(x)$ link $H$ in the stars
\[
q_1\{q_2,q_3\}Y, \qquad q_2\{q_1,q_3\}Y, \qquad q_3\{q_1,q_2\}Y;
\]
and
\item
\label{part:KmKn-nocommonvertex-H}
for each $y\in Y$ the triangles $U_1(y),U_2(y),U_3(y)$ link $G$ in the stars
\[
p_1X\{p_2,p_3\}, \qquad p_2X\{p_1,p_3\}, \qquad p_3X\{p_1,p_2\}.
\]
\end{enumerate}
\end{enumerate}
\end{thm}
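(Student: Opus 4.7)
My plan is to reduce to Theorem~\ref{thm:K4Kn} by analysing, for each $x\in X$, the $K_4$-subgraph $\langle p_1,p_2,p_3,x\rangle$ of $G$. I would begin by applying Theorem~\ref{thm:common-vertex-or-triangle} with the roles of $G$ and $H$ reversed (valid since $m,n\geq 5$), which yields the dichotomy for $H$ corresponding to cases~\eqref{case:KmKn-commonvertex} and~\eqref{case:KmKn-nocommonvertex}.

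In case~\eqref{case:KmKn-commonvertex}, for each $x\in X$ the subgraph $\langle p_1,p_2,p_3,x\rangle$ is in case~\eqref{case:K4Kn-commonvertex} of Theorem~\ref{thm:K4Kn}, since the common vertex $q$ of $H$ is in particular common to all triangles of $H$ linking this $K_4$. This yields pairwise disjoint sets $I_0,I_1(x),I_2(x),I_3(x)$ with union $H-\{q\}$ such that $T^*$ and $T_i(x)$ link $H$ in stars with apex $q$ and respective in-sets $I_i(x)$. The crucial remaining step is to show $I_i(x)$ is independent of $x$. Fixing $i\in\{1,2,3\}$ and $x,x'\in X$, I would apply Theorem~\ref{thm:K4Kn} to $\langle p_j,p_k,x,x'\rangle$, where $\{i,j,k\}=\{1,2,3\}$: of its four triangles, only $T_i(x)$ and $T_i(x')$ share an edge with $T^*$, so only two link $H$. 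This rules out case~\eqref{case:K4Kn-nocommonvertex} (which requires three linking triangles), leaving case~\eqref{case:K4Kn-commonvertex} with apex $q$, and the pairwise disjointness constraint on the in-sets of $T_i(x)$ and $T_i(x')$ (combined with careful orientation bookkeeping) forces $I_i(x)=I_i(x')$. Nonemptiness of $I_1,I_2,I_3$ is automatic because each $T_i(x)$ links $H$, and $I_0$ is nonempty if and only if $T^*$ links $H$.

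In case~\eqref{case:KmKn-nocommonvertex}, I would first show that every subgraph $\langle p_1,p_2,p_3,x\rangle$ is in case~\eqref{case:K4Kn-nocommonvertex} of Theorem~\ref{thm:K4Kn}, thereby establishing that $T^*$ does not link $H$ and producing distinct apexes $q_1(x),q_2(x),q_3(x)$ of the stars $\str{T_1(x)},\str{T_2(x)},\str{T_3(x)}$. If some such $K_4$ were instead in case~\eqref{case:K4Kn-commonvertex} with apex $q$, then applying Theorem~\ref{thm:K4Kn} to each $\langle p_j,p_k,x_0,x\rangle$ (necessarily in case~\eqref{case:K4Kn-commonvertex} since only two of its triangles link $H$) would propagate $q$ to the apex of every $\str{T_i(x)}$, making $q$ a common vertex of $H$, contrary to~\eqref{case:KmKn-nocommonvertex}. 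The same edge-pair $K_4$ argument then forces $q_i(x)=q_i(x')$, so I may define $q_i:=q_i(x)$ and $Y=H-\{q_1,q_2,q_3\}$. Part~(\ref{part:KmKn-nocommonvertex-G}) is then the statement of case~\eqref{case:K4Kn-nocommonvertex}. For part~(\ref{part:KmKn-nocommonvertex-H}), since the only triangles of $G$ linking $H$ are the $T_i(x)$, the linking numbers $\link{U_l(y)}{T}$ for $T$ a triangle of $G$ can be read off from part~(\ref{part:KmKn-nocommonvertex-G}); a short calculation shows that the triangles of $G$ linked positively by $U_l(y)$ are exactly those in the star $p_lX\{p_j,p_k\}$. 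Finally, $U^*=q_1q_2q_3$ follows because every triangle of $H$ linking $G$ shares an edge with $q_1q_2q_3$, while $q_1q_2q_3$ itself does not link $G$ (it contains no vertex of $Y$, so it belongs to no star $\str{T_i(x)}$).

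The main obstacle I anticipate is the orientation bookkeeping in the propagation arguments showing $I_i(x)=I_i(x')$ and $q_i(x)=q_i(x')$: identifying $T_i(x)$ and $T_i(x')$ with specific $C_j$'s of Theorem~\ref{thm:K4Kn} in $\langle p_j,p_k,x,x'\rangle$ depends on the vertex ordering chosen, and each identification may come with an unwanted sign. Careful tracking of these signs is required to conclude that the in-sets (or apex vertices) match, rather than being complementary or permuted.
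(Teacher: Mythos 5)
Your proposal is correct, and its skeleton --- reducing to Theorem~\ref{thm:K4Kn} applied to the subgraphs $\langle p_1,p_2,p_3,x\rangle$, plus an argument that the resulting data is independent of $x$, with part~\eqref{part:KmKn-nocommonvertex-H} obtained by re-expressing the linking in stars of $G$ (Remark~\ref{rem:K4Kn-nocommonvertex-stars} with $\{p_0\}$ replaced by $X$) --- is the same as the paper's. The one real difference is the independence step. The paper settles it before any case split with a two-line homology computation: writing the $4$-cycle $C=p_2x_1p_3x_2$ as $x_1p_3x_2+x_2p_2x_1$, whose summands share no edge with $T^*$ and so are null in $H_1(\real^3-D)$, and also as $T_1(x_1)-T_1(x_2)$, gives $\link{T_i(x_1)}{D}=\link{T_i(x_2)}{D}$ for every cycle $D$ in $H$; Theorem~\ref{thm:K4Kn} is then invoked only once, for a single fixed $x$. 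Your propagation via the auxiliary $K_4=\langle p_j,p_k,x,x'\rangle$ does work: only the two triangles containing $p_jp_k$ link $H$, ruling out case~\eqref{case:K4Kn-nocommonvertex}, and in case~\eqref{case:K4Kn-commonvertex} the two nonempty in-sets are complementary, so the two linking triangles link opposite stars --- a relation invariant under the global orientation reversal, which dissolves the sign worry you flag. But the full strength of Theorem~\ref{thm:K4Kn} is unneeded there: the two non-linking triangles of your auxiliary $K_4$ already force $[T_i(x)]=[T_i(x')]$ in $H_1(\real^3-D)$, which is the paper's computation in disguise. Likewise your opening appeal to Theorem~\ref{thm:common-vertex-or-triangle} with $G$ and $H$ swapped is harmless but unnecessary: the dichotomy between~\eqref{case:KmKn-commonvertex} and~\eqref{case:KmKn-nocommonvertex} is tautological, and the $U^*$ structure falls out of the stars in part~\eqref{part:KmKn-nocommonvertex-G}, exactly as you observe.
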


\begin{proof}
Let $x_1,x_2\in X$. We begin by showing that 
\[
\link{T_i(x_1)}{D} = \link{T_i(x_2)}{D}
\]
for $1\leq i\leq 3$ and all cycles $D$ in $H$.

By symmetry, we may assume without loss of generality that $i=1$, so that $T_i(x_1)=T_1(x_1)=p_3p_2x_1$, $T_i(x_2)=T_1(x_2)=p_3p_2x_2$. Consider the $4$-cycle $C=p_2x_1p_3x_2$ in $G$. As a $1$-chain we have
\[
C = x_1p_3x_2+x_2p_2x_1.
\]
The triangles $x_1p_3x_2$, $x_2p_2x_1$ have no edge in common with $T^*$, so by hypothesis they do not link $H$. Hence in $H_1(\real^3-D)$ we have
\[
[C] = [x_1p_3x_2]+[x_2p_2x_1]=0+0=0.
\]
On the other hand, we may also write
\[
C=p_2x_1p_3+p_3x_2p_2=T_1(x_1)-T_2(x_2),
\]
and therefore 
\[
[T_1(x_1)]-[T_2(x_2)]=[C] =0.
\]
It follows that $\link{T_1(x_1)}{D} = \link{T_1(x_2)}{D}$ as claimed.

Fix $x\in X$. Since no triangle contained in $X$ links $H$ by hypothesis, it follows from the above that the linking between $G$ and $H$ is completely determined by the linking between $G'=\langle x,p_1,p_2,p_3\rangle$ and $H$. Since $G'\cong K_4$, this is given by Theorem~\ref{thm:K4Kn}, with $x$ in the role of $p_0$; that is, with $C_i=T_i(x)$ for $0\leq i\leq 3$. 

If case~\eqref{case:K4Kn-commonvertex} of Theorem~\ref{thm:K4Kn} holds, then (since $T_i(x')$ must link $H$ in the same star as $T_i(x)$ for $1\leq i\leq 3$ and all $x'\in X$) $G$ links $H$ according to Case~\eqref{case:KmKn-commonvertex} above. We note that $I_i$ is necessarily nonempty for $1\leq i\leq 3$, because $T_i(x)$ links $H$ for all $x\in X$ by hypothesis. Moreover, $T^*$ links $H$ if and only $I_0$ is nonempty, as given.

If Case~\eqref{case:K4Kn-nocommonvertex} of Theorem~\ref{thm:K4Kn} holds, then $G$ links $H$ according to~\eqref{part:KmKn-nocommonvertex-G}. To obtain part~\eqref{part:KmKn-nocommonvertex-H}, we replace $\{p_0\}$ with $X$ in the stars given in Remark~\ref{rem:K4Kn-nocommonvertex-stars}. 
\end{proof}

\begin{figure}
\setlength{\radius}{1.8cm}
\begin{center}
\begin{tabular}{cc}
\begin{tikzpicture}[vertex/.style={circle,draw,minimum size = 2mm,inner sep=0pt},thick,null/.style={minimum size = 0mm, inner sep = 0pt}]
\foreach \i in {1,2,3}
   {\node (p\i) at (60+\i*120:\radius) [vertex,fill=blue] {};
        \node at (60+\i*120:1.15*\radius) {$p_\i$};}
\begin{scope}[xshift=-0.3*\radius]
\foreach \j/\m/\t in {0/0.9/-5,1/1/{-10},2/1/0,3/1/-10}
    \node (x\j) at (-14+\t+\j*90:\m*0.15*\radius) [vertex] {};
\coordinate (bluecentre) at (0,0);
\end{scope}
\node (q) at (0.3*\radius,0) [vertex,fill=red] {}; 
\foreach \i/\t in {0/-30,1/30,2/120,3/240}    
   {\node (q\i) at (\t:1.35*\radius) {};}
\foreach \j/\m/\t in {0/1/15,1/1/{-10},2/1/-5,3/1/0}
    \path (q0) ++ (-14+\t+\j*90:\m*0.15*\radius) node (q0\j) [vertex] {};
\foreach \j/\m/\t in {0/1/40,1/1/{-10},2/1/0,3/1.1/10}
    \path (q1) ++ (-14+\t+\j*90:\m*0.15*\radius) node (q1\j) [vertex] {};
\foreach \j/\m/\t in {0/1/15,1/1/{-10},2/1/0,3/1/0}
    \path (q2) ++ (-34+\t+\j*90:\m*0.15*\radius) node (q2\j) [vertex] {};
\foreach \j/\m/\t in {0/1/15,1/1/{-10},2/1/0,3/1/0}
    \path (q3) ++ (-14+\t+\j*90:\m*0.15*\radius) node (q3\j) [vertex] {};
\foreach \i in {0,1,2,3}
   {\coordinate (q2a\i) at (120:1.375*\radius+\i*0.05*\radius);
    \coordinate (q1t\i) at (30:1.9*\radius+\i*0.05*\radius);
    \coordinate (q0a\i) at (-30:1.375*\radius+\i*0.05*\radius);
    \draw [red] (q2a\i) to [out=45,in = 120] (q1t\i) to [out=-60,in=45] (q0a\i);}
\foreach \i in {0,1,2,3}
   {\coordinate (q3a\i) at (-120:1.375*\radius+\i*0.05*\radius);
    \coordinate (q0t\i) at (-30:1.9*\radius+\i*0.05*\radius);
    \coordinate (q1a\i) at (30:1.375*\radius+\i*0.05*\radius);}
    \draw [white,line width=16] (q3a1) to [out=-45,in = -120] (q0t1) to [out=60,in=-45] (q1a1);
    \draw [white,line width=16] (q3a2) to [out=-45,in = -120] (q0t2) to [out=60,in=-45] (q1a2);
\foreach \i in {0,1,2,3}
  \draw [red] (q3a\i) to [out=-45,in = -120] (q0t\i) to [out=60,in=-45] (q1a\i);
\foreach \r in {1.36,1.41,1.31,1.27}
  \draw [color=red] (0,0) circle (\r*\radius);
\foreach \j in {0,1,2,3}
        \draw [color=red] (q) -- (q0\j);
\draw [color=white,line width=8] (-50:\radius) arc (-50:0:\radius);
\foreach \i in {1,2,3}
    \draw [color=blue] (p\i) arc (60+\i*120:180+\i*120:\radius);
\foreach \i in {1,2,3}
    {\foreach \j in {0,1,2,3,}
        \draw [color=white,line width=8] (q) -- (q\i\j);}
\foreach \i in {1,2,3}
    {\foreach \j in {0,1,2,3}
        \draw [color=red] (q) -- (q\i\j);}
\foreach \i in {1,2,3}
    {\foreach \j in {0,1,2,3}
        \draw [color=white,line width=8] (p\i) -- (x\j);}
\foreach \i in {1,2,3}
    {\foreach \j in {0,1,2,3}
        \draw [color=blue] (p\i) -- (x\j);}
\filldraw [color=white,draw=blue] (bluecentre) circle (0.22*\radius);
\foreach \i in {0,1,2,3} 
{\filldraw [color=white,draw=red] (q\i) circle (0.25*\radius);
\node at (q\i) {$I_{\i}$};}
\node [above] at (q) {$q$};
\node at (bluecentre) {$X$};
\end{tikzpicture} &
\begin{tikzpicture}[vertex/.style={circle,draw,minimum size = 2mm,inner sep=0pt},thick]
\draw [color=red] (0,0) circle (1.3*\radius);
\foreach \i in {1,2,3}
   {\node (p\i) at (60+\i*120:\radius) [vertex,fill=blue] {};
    \node (q\i) at (-120+\i*120:1.3*\radius) [vertex,fill=red] {};
    \node at (60+\i*120:1.15*\radius) {$p_\i$};
    \node at (-120+\i*120:1.47*\radius) {$q_\i$};}
\begin{scope}[xshift=-0.3*\radius]
\foreach \j/\m/\t in {0/1/15,1/1/{-10},2/1/0,3/1/0}
    \node (x\j) at (-44+\t+\j*90:\m*0.15*\radius) [vertex] {};
\coordinate (bluecentre) at (0,0);
\end{scope}
\begin{scope}[xshift=0.3*\radius]
\foreach \j/\m/\t in {0/1.2/{-10},1/1.1/{-5},2/1/3,3/1/0,4/0.5/0}
    \coordinate (y\j) at (-44+\t+\j*72:\m*0.15*\radius);
\coordinate (redcentre) at (0,0);
\end{scope}
\foreach \i in {1,2,3}
    {\foreach \j in {0,1,2,3}
        \draw [color=blue] (p\i) -- (x\j);}
\foreach \i in {1,2,3}
    {\foreach \j in {0,1,2,3,4}
        \draw [color=white,line width=4] (q\i) -- (y\j);}
\foreach \i in {1,2,3}
    {\foreach \j in {0,1,2,3,4}
        \draw [color=red] (q\i) -- (y\j);}
\foreach \i in {1,2,3}
    \draw [color=white,line width=8] (70+\i*120:\radius) arc (70+\i*120:170+\i*120:\radius);
\foreach \i in {1,2,3}
    \draw [color=blue] (p\i) arc (60+\i*120:180+\i*120:\radius);
\filldraw [color=white,draw=blue] (bluecentre) circle (0.22*\radius);
\node at (bluecentre) {$X$};
\filldraw [color=white,draw=red] (redcentre) circle (0.22*\radius);
\node at (redcentre) {$Y$};
\node (xyz) at (-90:2*\radius) {};
\end{tikzpicture}
\end{tabular}
\caption{Embeddings of $G\cong K_m$ (blue) and $H\cong K_n$ (red) realising Cases~\eqref{case:KmKn-commonvertex} (left) and~\eqref{case:KmKn-nocommonvertex} (right) of Theorem~\ref{thm:KmKn-triangle}.}
\label{fig:KmKn}
\end{center}
\end{figure}
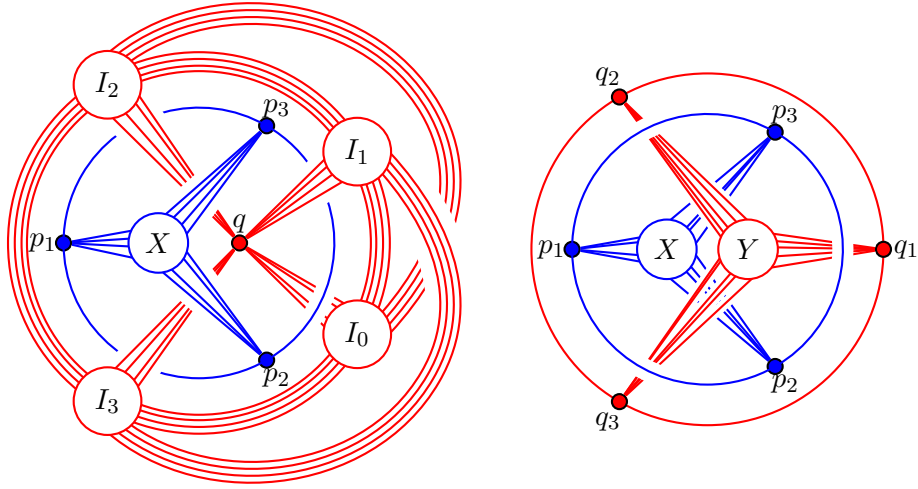

\begin{thm}
\label{thm:KmKn-triangle-weak}
The embeddings of Figure~\ref{fig:KmKn} realising Cases~\eqref{case:KmKn-commonvertex} and~\eqref{case:KmKn-nocommonvertex} of Theorem~\ref{thm:KmKn-triangle} are weakly linked.
\end{thm}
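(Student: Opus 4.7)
The plan is to apply Lemma~\ref{lem:star-implies-weak} by showing that in each embedding of Figure~\ref{fig:KmKn}, every cycle $D$ in $H$ either fails to link $G$ or links $G$ in a star in $G$. The lemma, applied with $D$ as the simple closed curve and $G$ as the complete graph, then gives $|\link{D}{C}|\leq 1$ for every cycle $C$ in $G$. Since the stars $qO_iI_i$ and $q_i\{q_j,q_k\}Y$ described in Theorem~\ref{thm:KmKn-triangle} contain triangles (using $n\geq 5$), some cycle in $G$ does link some cycle in $H$, so the embeddings are weakly linked.

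The first step, in both cases, is to verify that every triangle of $H$ which links $G$ does so in a star in $G$. In case~\eqref{case:KmKn-commonvertex} such a triangle has the form $U=qs_1s_2$ with $s_i\in I_{j_i}$ and $j_1\neq j_2$; using the partition property of the $I_i$ and the stars $qO_iI_i$, one computes $\link{T_i(x)}{U}=[i=j_2]-[i=j_1]$ (with $T_0(x)=T^*$), so $U$ positively links exactly the triangles $T_{j_2}(x)$ and $-T_{j_1}(x)$ as $x$ ranges over $X$. These form either a fan in $G$ (when $0\in\{j_1,j_2\}$) or the proper star $p_\ell|X|\{p_m,p_n\}$ with $\{\ell,m,n\}=\{1,2,3\}$ and $\ell\notin\{j_1,j_2\}$ (otherwise). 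For case~\eqref{case:KmKn-nocommonvertex} the corresponding statement is Theorem~\ref{thm:KmKn-triangle}~\eqref{part:KmKn-nocommonvertex-H}.

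For case~\eqref{case:KmKn-commonvertex}, the extension from triangles to arbitrary cycles is a telescoping argument with $q$ as base. If $q\notin D$, every triangle in the fan decomposition avoids $q$ and hence does not link $G$. Otherwise $D=qv_1\cdots v_{k-1}$ decomposes as $D=\sum_{j=1}^{k-2}qv_jv_{j+1}$, and the identity $\link{qv_jv_{j+1}}{T_i(x)}=[i=t_{j+1}]-[i=t_j]$ (where $v_j\in I_{t_j}$) telescopes to
\[
\link{D}{T_i(x)}=[i=t_{k-1}]-[i=t_1].
\]
This matches the linking pattern of the triangle $qv_1v_{k-1}$, so $D$ links $G$ in the same star.

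Case~\eqref{case:KmKn-nocommonvertex} has the same shape but is more intricate as $H$ contains no common vertex. I take as base any vertex of $\{q_1,q_2,q_3\}\cap D$, say $q_1$; if this intersection is empty then $D\subseteq Y$ and does not link $G$. A direct computation of the classes of the triangles $q_1v_jv_{j+1}$ in $H_1(\real^3-G)$, together with the relation $[U^*]=0$ (equivalently $[U_1(y)]+[U_2(y)]+[U_3(y)]=0$), expresses $[D]$ as an integer combination $\alpha [U_3(y)]+\beta [U_2(y)]$, where $\alpha$ and $\beta$ are signed counts of transitions through $q_2$ and $q_3$ in the path $v_1,\ldots,v_{k-1}$. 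The main obstacle is to show $|\alpha|,|\beta|\leq 1$ and $(\alpha,\beta)\notin\{(1,-1),(-1,1)\}$; combined, these ensure $D$ links $G$ in one of the stars identified in the second paragraph. The first bound is immediate from simplicity of $D$ (which forces each of $q_2,q_3$ to appear at most once), and the second follows from a short case analysis on the positions of $q_2,q_3$ in $D$, in which every bad configuration forces a vertex repetition. Once both cases are handled, Lemma~\ref{lem:star-implies-weak} concludes the proof.
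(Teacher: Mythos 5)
Your proposal is correct, but it runs in the opposite direction from the paper's proof. The paper works with cycles $C$ in $G$: it uses the geometric fact that $G$ minus the three edges of $T^*$ is separated from $H$ by a $2$-sphere, decomposes any linking cycle $C$ as $C_1+C_2$ where $C_2$ lies in that unlinked subgraph and $C_1$ is a triangle or square through $p_1,p_2$ (possibly $p_3$) and one vertex $x_0\in X$, and then simply reads off from the figure that $C_1$ links a star in $H$; this handles Cases~\eqref{case:KmKn-commonvertex} and~\eqref{case:KmKn-nocommonvertex} uniformly with no sign bookkeeping. You instead work with cycles $D$ in $H$, arguing purely homologically from the triangle-level linking pattern of Theorem~\ref{thm:KmKn-triangle} that every such $D$ links $G$ in a star (or not at all), and then invoke Lemma~\ref{lem:star-implies-weak}. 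Your Case~\eqref{case:KmKn-commonvertex} argument is essentially the paper's own Proposition~\ref{prop:commonvertex-weak}, and the paper explicitly remarks that this proposition yields a second proof for the left-hand embedding; your Case~\eqref{case:KmKn-nocommonvertex} argument goes beyond what the paper records, and your $(\alpha,\beta)$ analysis does check out: writing out the contributions, $\alpha$ is (indicator that $q_2$ is preceded in $D$ by a $Y$-vertex) minus (indicator that it is followed by one), and similarly for $\beta$, so $|\alpha|,|\beta|\leq 1$ is automatic, while $(\alpha,\beta)=(\pm1,\mp1)$ would force each of $q_2,q_3$ to be entered from $Y$ but not exited to $Y$, hence each terminal or followed by the other, which is impossible in a simple cycle --- and $(\alpha,\beta)=\pm(1,1)$ is harmless since $[U_2]+[U_3]=-[U_1]$. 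The trade-off: your argument uses only the pattern of triangle--triangle linking numbers, so it actually proves the stronger statement that \emph{any} embedding realising the patterns of Theorem~\ref{thm:KmKn-triangle} is weakly linked (extending Proposition~\ref{prop:commonvertex-weak} to the no-common-vertex case), whereas the paper's proof is shorter but tied to the specific geometry of Figure~\ref{fig:KmKn}; note that both proofs equally presuppose that the figure's embeddings realise the stated triangle-level patterns, so you lose nothing there.
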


\begin{proof}
Let $T^*$ be the triangle $p_1p_2p_3$, and let $G'$ be $G$ minus the three edges $p_1p_2$, $p_2p_3$ and $p_3p_1$ of $T^*$. Then there is a $2$-sphere separating $G'$ from $H$, so $G'$ does not link $H$. Therefore any cycle $C$ in $G$ that links $H$ must use at least one edge belonging to $T^*$.

If $C$ uses all three edges of $T^*$, then we necessarily have $C=\pm T^*$. In the embedding of Figure~\ref{fig:KmKn} (left) $T^*$ does not link $H$, and in the embedding of Figure~\ref{fig:KmKn} (right) $T^*$ links $H$ in the star $qO_0I_0$, where $O_0=I_1\cup I_2\cup I_3$. In either case $C$ does not strongly link $H$, so we may assume in what follows that $C$ uses at most two edges of $T^*$. 

The edges of $T^*$ on $C$ must occur consecutively, so we may decompose $C$ as the concatenation $PQ$, where $P$ is a path in $T^*$ and $Q$ is a path in $G'$. By symmetry, we may assume without loss of generality that $P$ begins at $p_1$ and ends at $p_2$ (travelling anticlockwise if it has length $1$, and clockwise via $p_3$ if it has length $2$). Let $x_0$ be the first vertex of $X$ on $Q$, and let $R$, $\bar{R}$ be the paths $p_2x_0p_1$, $p_1x_0p_2$, respectively. Then we may decompose $C$ as $C=C_1+C_2$, where $C_1$ is the concatenation $PR$ and $C_2$ is the concatenation $\bar{R}Q$ (when $Q=R$ we have simply $C=C_1$). Then $C_2$ does not link $H$, because it is a cycle in $G'$, so for any cycle $D$ in $H$ we have
\[
\link{C}{D} = \link{C_1}{D}.
\]
To complete the proof we check that $C_1$ does not strongly link $H$, by verifying that it links a star in $H$.

If the path $P$ has length $1$ (that is, if it is simply the edge $p_1p_2$) then $C_1=p_1p_2x_0$ is the triangle $-T_3(x_0)$. In either embedding this links $H$ in a star: the star $qI_3O_3$ in Figure~\ref{fig:KmKn} (left),  and the star $q_3Y\{q_1,q_2\}$ in Figure~\ref{fig:KmKn} (right). On the other hand, if $P$ has length $2$ (that is, if $P$ is the path $p_1p_3p_2$), then $C_1$ links the star $q(I_1\cup I_2)(I_0\cup I_3)$ in Figure~\ref{fig:KmKn} (left),  and the star $q_3Y\{q_1,q_2\}$ in Figure~\ref{fig:KmKn} (right). In all cases $C_1$ links a star in $H$, so $G$ and $H$ are not strongly linked.
\end{proof}

\subsection{Embeddings with a common vertex in both $G$ and $H$}
\label{sec:commonvertex}

In this section we analyse the case where there is a vertex in each graph common to all triangles linking the other.
The linking between the two graphs is described by Theorem~\ref{thm:KmKn-pq}, and we exhibit a weakly linked embedding realising it in Figure~\ref{fig:pq-embedding}.

\begin{thm}
\label{thm:KmKn-pq}
Let $m,n\geq 5$, and let $G\cong K_m$ and $H\cong K_n$ be weakly linked graphs in $\real^3$. Suppose that there is a vertex $p$ of $G$ common to all triangles of $G$ linking $H$, and a vertex $q$ of $H$ common to all triangles of $H$ linking $G$. Then for some $2\leq\ell\leq\min\{m,n\}-1$, there exists 
\begin{itemize}
\item
a partition $\mathcal{X}=\{X_0,X_1,\ldots,X_{\ell-1}\}$ of $G'=G-\{p\}$, such that the triangle $pxy$ of $G$ links $H$ if and only if $x$ and $y$ belong to different parts of $\mathcal{X}$; and
\item
a partition $\mathcal{Y}=\{Y_0,Y_1,\ldots,Y_{\ell-1}\}$ of $H'=H-\{q\}$, such that the triangle $quv$ of $H$ links $G$ if and only if $u$ and $v$ belong to different parts of $\mathcal{Y}$.
\end{itemize}
Moreover:
\begin{enumerate}
\item
If $x_j\in X_j$, $x_k\in X_k$ for $j<k$, then $px_jx_k$ links $H$ in the star $qO_{jk}I_{jk}$, where
\begin{align*}
O_{jk} &= \bigcup_{i=j}^{k-1} Y_i, &
I_{jk} &= H'-O_{jk} = \left(\bigcup_{i=0}^{j-1}Y_i\right)\cup \left(\bigcup_{i=k}^{\ell-1}Y_i\right). 
\end{align*}
\item
If $y_j\in Y_j$, $y_k\in Y_k$ for $j<k$, then $qy_jy_k$ links $G$ in the star $pP_{jk}J_{jk}$, where
\begin{align*}
J_{jk} &= \bigcup_{i=j+1}^{k} X_i, &
P_{jk} &= G'-J_{jk} = \left(\bigcup_{i=0}^{j}X_i\right)\cup \left(\bigcup_{i=k+1}^{\ell-1}X_i\right).
\end{align*}
\end{enumerate}
\end{thm}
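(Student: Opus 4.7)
The plan is to build the partitions $\mathcal{X}, \mathcal{Y}$ from natural equivalence relations, extract a strict linear order from the stars through $p$ using the weak-linking hypothesis, and verify the explicit formulas by a short homological computation.

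I would begin by defining $x \simg y$ on $G' := G - \{p\}$ to hold when $pxy$ does not link $H$ (with $x \simg x$).  The $1$-chain identity $pxy + pyz = pxz + xyz$ together with $[xyz] = 0$ in $H_1(\real^3 - D)$ for each cycle $D \subset H$ (since $p \notin xyz$, so by hypothesis $xyz$ does not link $H$) proves transitivity.  Defining $\simh$ on $H' := H - \{q\}$ analogously yields partitions $\mathcal{X} = \{X_0, \ldots, X_{\ell - 1}\}$ and $\mathcal{Y} = \{Y_0, \ldots, Y_{\ell' - 1}\}$ with $\ell, \ell' \geq 2$. Next, the identity $pxy + xx'y = pxx' + px'y$ combined with $[xx'y] = [pxx'] = 0$ gives $[pxy] = [px'y]$ whenever $x, x' \in X_j$ and $y \in X_k$ for $j \neq k$; hence the star $\Sigma_{jk}$ that Theorem~\ref{thm:star} associates to $px_jx_k$ depends only on $j, k$, has apex $q$ (since $q$ is common to all triangles of $H$ linking $G$), and I write $\Sigma_{jk} = qO_{jk}I_{jk}$ with $\Sigma_{kj} = -\Sigma_{jk}$.

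The central step is to fix $X_0$ and prove that $O_{01}, \ldots, O_{0, \ell - 1}$ form a strict chain under inclusion.  Assuming $O_{0j}, O_{0k}$ incomparable, choose $u \in O_{0j} \setminus O_{0k}$ and $v \in O_{0k} \setminus O_{0j}$; the identity $[px_jx_k] = [px_0x_k] - [px_0x_j]$ gives
\[
\link{quv}{px_jx_k} = \link{quv}{px_0x_k} - \link{quv}{px_0x_j} = (-1) - (+1) = -2,
\]
contradicting weak linking, and $O_{0j} = O_{0k}$ would force $[px_jx_k] = 0$ on every cycle of $H$, giving $x_j \simg x_k$ and contradicting $X_j \neq X_k$.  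Relabel $\mathcal{X}$ so that $O_{01} \subsetneq O_{02} \subsetneq \cdots \subsetneq O_{0, \ell - 1}$, and set $Y_0 := O_{01}$, $Y_a := O_{0, a+1} \setminus O_{0, a}$ for $1 \leq a \leq \ell - 2$, and $Y_{\ell - 1} := H' \setminus O_{0, \ell - 1}$; these are $\ell$ nonempty sets partitioning $H'$.  Any $\simh$-class lies on one side of each $\Sigma_{0k}$ (else some $quv$ from it links $px_0x_k$), so by the chain structure it lies in a single $Y_a$; hence $\ell' \geq \ell$, and the symmetric argument run from a basepoint of $\mathcal{Y}$ gives $\ell \geq \ell'$, so $\ell = \ell'$ and $\mathcal{Y}$ coincides with $\{Y_0, \ldots, Y_{\ell - 1}\}$.

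To verify the formula for $O_{jk}$ with $j < k$, let $\chi_{jk} : H' \to \{\pm 1\}$ be the $\pm 1$-indicator of $O_{jk}$. The identity $[px_jx_k] = [px_0x_k] - [px_0x_j]$ forces $\chi_{jk}(u) - (\chi_{0k}(u) - \chi_{0j}(u))$ to be constant on $H'$, and since $\chi_{0k} - \chi_{0j}$ takes only the values $0$ and $2$ on the chain $O_{0j} \subsetneq O_{0k}$, the only constant making $\chi_{jk}$ take values in $\{\pm 1\}$ is $-1$, giving $O_{jk} = O_{0k} \setminus O_{0j} = Y_j \cup \cdots \cup Y_{k-1}$ as required. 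Part~(2) then follows from Part~(1) by directly computing $\link{qy_jy_k}{px_ix_{i'}}$ and checking that these linking numbers match the star $pP_{jk}J_{jk}$ of the claimed form. The hard part of this approach is the bookkeeping: one must ensure the relabelling of $\mathcal{Y}$ inherited from the chain on the $O_{0k}$ agrees with the analogous chain on the stars $\Xi_{0b}$ used to obtain $\ell \geq \ell'$, which is cleanest to arrange by fully establishing Part~(1) before turning to Part~(2).
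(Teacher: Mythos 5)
Your proposal is correct, and its central step runs on a genuinely different engine from the paper's. The paper obtains the ordered structure by applying Theorem~\ref{thm:K4Kn} to each $K_4$-subgraph $\langle p,x,y,z\rangle$, which yields an orientation function $\eps(x,y,z)\in\{\pm1\}$ on triples of $\simg$-classes; it then proves a four-point coherence lemma (Lemma~\ref{lem:cycliconquads}), extracts a total order with $\eps(x_0,y,z)=1$ (Proposition~\ref{prop:cyclicordering}), defines $Y_i=O_{x_ix_{i+1}}$, and proves the union formula for $O_{x_jx_k}$ by induction on $k$ (Proposition~\ref{prop:G-outsets}), with the identification of $\mathcal{Y}$ as the $\simh$-classes following as a corollary. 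You bypass the $K_4$ machinery entirely: your computation
$\link{quv}{px_jx_k}=\link{quv}{px_0x_k}-\link{quv}{px_0x_j}=-2$
for $u\in O_{0j}\setminus O_{0k}$, $v\in O_{0k}\setminus O_{0j}$ is valid (it rests only on the $1$-chain identity and the fact that $x_0x_jx_k$ misses $p$), and it forces the based out-sets $O_{01},\dots,O_{0,\ell-1}$ into a strict chain; the partition $\mathcal{Y}$ then appears as the atoms of that chain, and the general formula $O_{jk}=O_{0k}\setminus O_{0j}$ falls out of your constant-discrepancy argument for $\chi_{jk}-(\chi_{0k}-\chi_{0j})$. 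What each route buys: yours is more elementary and self-contained, using only Theorem~\ref{thm:star} and homology identities, whereas the paper's $\eps$-function makes the circular symmetry of the two partitions manifest and reuses Theorem~\ref{thm:K4Kn}, which is already in hand. Three small remarks, none of them gaps. First, pinning down $c=-1$ implicitly uses that $\chi_{0k}-\chi_{0j}$ attains \emph{both} values $0$ and $2$; this is guaranteed by nonemptiness of $O_{0j}$ (or $I_{0k}$) and strictness of the chain, and is worth a sentence. Second, your closing bookkeeping worry is unfounded: the inequality $\ell\geq\ell'$ is pure counting ($\ell$ nonempty atoms, each a union of $\simh$-classes), so no compatibility between the labelings of the two chains is ever needed --- indeed, once Part~(1) is proved, the ``different parts of $\mathcal{Y}$'' characterisation follows directly, since $u\in Y_a$, $v\in Y_b$ with $a\neq b$ gives $\link{quv}{px_ax_{a+1}}=\pm1$ while $u\simh v$ kills all triangles through $q$. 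Third, Part~(2), which you only sketch, is exactly the finite verification carried out in the paper's Proposition~\ref{prop:H-outsets}: $\link{px_ax_b}{qy_jy_k}=1$ precisely when $b\in(j,k]$ and $a\notin(j,k]$, which matches the star $pP_{jk}J_{jk}$; writing this out completes the proof.
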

An embedding realising the linking of Theorem~\ref{thm:KmKn-pq} is described in Construction~\ref{cons:pq-embedding}, and the case $\ell=5$ is illustrated in Figure~\ref{fig:pq-embedding}. Note that the partitions $\mathcal{X}$ and $\mathcal{Y}$ are circularly rather than linearly ordered.

We will prove Theorem~\ref{thm:KmKn-pq} through a series of intermediate results. These will typically be proved under the hypotheses of Theorem~\ref{thm:KmKn-pq}.  To avoid repeating these, unless some other hypothesis is given,
\emph{we assume throughout this section that $G$ and $H$ are weakly linked.}
Our first step is to get our hands on the partition $\mathcal{X}$, which we will do by defining an equivalence relation $\simg$ on $G'$. The definition of $\simg$ depends only on the existence of the vertex $p\in G$ common to all triangles linking $H$, and not on the existence of the vertex $q\in H$ common to all triangles linking $G$. For full generality we therefore begin by assuming only the existence of $p$, and postpone introducing the hypothesis of the existence of $q$. Thus, unless some other hypothesis is given, \emph{we assume throughout this section that there is a vertex $p$ of $G$ common to all triangles of $G$ linking $H$}.

\begin{defn}
\label{defn:unlinkedrelation}
Let $G'=G-\{p\}$. We define a relation $\simg$ on the vertices of $G'$ by $x\simg y$ if and only if $x=y$, or $x\neq y$ and $pxy$ does not link $H$.
\end{defn}

We prove that $\simg$ is an equivalence relation on $G'$ in Lemma~\ref{lem:unlinkedequivalence} below. We 
will write $[x]$ for the equivalence class of $x\in G'$ with respect to $\simg$, and $\mathcal{X}$ for $\{[x]:x\in G'\}$, the set of equivalence classes of $\simg$. Note that $|\mathcal{X}|\geq 2$, because if $|\mathcal{X}|=1$ then $G$ does not link $H$. 

To prove Lemma~\ref{lem:unlinkedequivalence} and 
establish some other properties of $\simg$ we will repeatedly use the following lemma.

\begin{lem}
\label{lem:threepointsum}
Let $x,y,z$ be distinct vertices of $G'$, and let $D$ be a cycle of $H$. Then
\begin{equation}
\label{eq:threepointsum}
[pxy]+[pyz]+[pzx]=0
\end{equation}
holds in $H_1(\real^3-D)$. 
\end{lem}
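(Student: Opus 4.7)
The plan is to exploit the fact that the three triangles $pxy$, $pyz$, $pzx$ together form the boundary of the triangle $xyz$ in simplicial chains, and then to use the hypothesis on $p$ to kill the resulting class.

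First I would compute the 1-chain sum directly. Writing each oriented triangle as a sum of oriented edges, the edges incident to $p$ cancel in pairs:
\[
pxy+pyz+pzx = \bigl((p,x)+(x,p)\bigr)+\bigl((p,y)+(y,p)\bigr)+\bigl((p,z)+(z,p)\bigr)+(x,y)+(y,z)+(z,x),
\]
which reduces to the 1-cycle $xyz$. Hence as classes in $H_1(\real^3-D)$,
\[
[pxy]+[pyz]+[pzx]=[xyz].
\]

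It therefore suffices to show $[xyz]=0$, i.e., $\link{xyz}{D}=0$. Here I invoke the running hypothesis of the section: there is a vertex $p$ of $G$ common to every triangle of $G$ that links $H$. Since $x,y,z$ are vertices of $G'=G-\{p\}$, the triangle $xyz$ does not contain $p$, so $xyz$ cannot link any cycle of $H$. In particular $\link{xyz}{D}=0$, giving $[xyz]=0$ in $H_1(\real^3-D)$ and completing the proof.

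I do not expect any real obstacle here: once the chain computation has been done, the conclusion is immediate from the defining hypothesis on $p$. The only care needed is the bookkeeping with orientations in the first step, ensuring each of the three edges $px$, $py$, $pz$ appears once in each orientation so that everything cancels cleanly and leaves the cycle $xyz$.
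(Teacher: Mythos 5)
Your proof is correct and follows essentially the same route as the paper's: the paper states the chain identity as $pxy+pyz+pzx+zyx=0$ in $G$ (equivalent to your $pxy+pyz+pzx=xyz$) and likewise concludes $[zyx]=0$ from the running hypothesis that $p$ lies on every triangle of $G$ linking $H$. Your explicit edge-by-edge cancellation is just a spelled-out verification of that identity, so there is nothing to add.
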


\begin{proof}
The triangles $pxy$, $pyz$, $pzx$ and $zyx$ satisfy $pxy+pyz+pzx+zyx=0$ as $1$-chains in $G$, so in $H_1(\real^3-D)$ we have
\[
[pxy]+[pyz]+[pzx]+[zyx]=0.
\]
By assumption $p$ is common to all triangles of $G$ linking $H$, so $zyx$ does not link $H$ and therefore $[zyx]=0$. The lemma follows. 
\end{proof}

\begin{lem}
\label{lem:unlinkedequivalence}
The relation $\simg$ on $G'$ of Definition~\ref{defn:unlinkedrelation} is an equivalence relation.
\end{lem}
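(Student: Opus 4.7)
The plan is to verify the three equivalence relation properties directly, with Lemma~\ref{lem:threepointsum} doing essentially all the work for transitivity. Reflexivity is built into the definition, and symmetry follows from the fact that $pyx = -pxy$ as a $1$-chain, so $[pyx] = -[pxy]$ in $H_1(\real^3 - D)$ for any cycle $D$ in $H$; hence $pxy$ links $H$ if and only if $pyx$ does. So the only substantive point is transitivity.

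For transitivity, I would take distinct $x, y, z \in G'$ with $x \simg y$ and $y \simg z$, and aim to show $x \simg z$, i.e., that $pxz$ does not link $H$. Fix an arbitrary cycle $D$ in $H$. By Lemma~\ref{lem:threepointsum},
\[
[pxy] + [pyz] + [pzx] = 0 \quad\text{in } H_1(\real^3 - D).
\]
By hypothesis $pxy$ and $pyz$ do not link $H$, so $[pxy] = [pyz] = 0$ in $H_1(\real^3 - D)$. Therefore $[pzx] = 0$, so $[pxz] = -[pzx] = 0$ as well. Since $D$ was arbitrary, $pxz$ does not link any cycle of $H$, i.e., $x \simg z$. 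The cases where two of $x, y, z$ coincide are immediate from the definition.

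There is essentially no obstacle here: Lemma~\ref{lem:threepointsum} was set up precisely to give this three-term cocycle-type identity in $H_1(\real^3 - D)$ using the fact that $p$ is a common apex for all triangles of $G$ linking $H$ (which kills the fourth triangle $zyx$). Transitivity of $\simg$ is then the direct combinatorial consequence, and the argument is barely more than one line of homological bookkeeping.
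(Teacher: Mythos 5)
Your proof is correct and follows the paper's own argument essentially verbatim: reflexivity by definition, symmetry from orientation reversal of the triangle, and transitivity by applying Lemma~\ref{lem:threepointsum} with $[pxy]=[pyz]=0$ for an arbitrary cycle $D$ in $H$. No gaps; nothing further to add.
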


\begin{proof}
The relation $\simg$ is reflexive by definition, and it is symmetric because $\link{pyx}{D}=-\link{pxy}{D}$ for all $x\neq y$ in $G'$ and any cycle $D$ in $H$. To prove that $\simg$ is transitive, suppose that $x,y,z$ are distinct vertices of $G'$ such that $x\simg y$ and $y\simg z$. Let $D$ be a cycle of $H$.
Then $[pxy]=[pyz]=0$ in $H_1(\real^3-D)$, so by Lemma~\ref{lem:threepointsum} we have
\[
[pxz] = [pxy]+[pyz] =0+0=0
\]
also. Since this holds for any cycle $D$ in $H$ we conclude that $pxz$ does not link $H$, and therefore $x\simg z$.
\end{proof}

\begin{remark*}
In the embedding of Figure~\ref{fig:KmKn} (left), the vertex $q$ is common to all triangles of $H$ linking $G$. The equivalence classes of the corresponding relation $\simg$ defined on $H'=H-\{q\}$ are the sets $I_i$, for $0\leq i\leq 3$.
\end{remark*}

\begin{lem}
\label{lem:simimpliessamelinking}
Let $x$, $y\in G'$ with $x\simg y$, and let $D$ be a cycle in $H$. Then
\[
\link{pxz}{D} = \link{pyz}{D}
\]
for all $z\in G'$ with $z\neq x,y$.
\end{lem}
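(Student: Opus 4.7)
The plan is to deduce this directly from Lemma~\ref{lem:threepointsum}, which is the identity
\[
[pxy]+[pyz]+[pzx]=0
\]
in $H_1(\real^3-D)$ for any cycle $D\subseteq H$ and distinct vertices $x,y,z\in G'$. The whole argument is essentially one step, so I will describe it in a single paragraph rather than breaking it into phases.

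First I would dispense with the trivial case: if $x=y$ there is nothing to prove. So assume $x\neq y$, in which case $x\simg y$ means by definition that $pxy$ does not link $H$, i.e., $[pxy]=0$ in $H_1(\real^3-D)$ for every cycle $D$ in $H$. Fix any $z\in G'$ with $z\neq x,y$; then $x,y,z$ are distinct, and Lemma~\ref{lem:threepointsum} applies to yield
\[
[pxy]+[pyz]+[pzx]=0.
\]
Substituting $[pxy]=0$ gives $[pyz]=-[pzx]=[pxz]$, where the second equality just reverses the orientation of the triangle. Passing to linking numbers (using that $[\cdot]$ in $H_1(\real^3-D)\cong\integer$ is the linking number with $D$) gives $\link{pxz}{D}=\link{pyz}{D}$, which is what we want.

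There is no real obstacle here: the lemma is essentially the observation that ``$\simg$-equivalent vertices are interchangeable in triangles through $p$ as far as linking with $H$ is concerned'', and it follows from the chain relation $pxy+pyz+pzx+zyx=0$ in $G$ together with the hypothesis that $zyx$ (which does not contain $p$) fails to link $H$. The only thing to be careful about is the restriction $z\neq x,y$ required to form a genuine triangle $pxz$ and $pyz$; both are automatic from the statement.
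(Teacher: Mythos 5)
Your proof is correct and matches the paper's argument exactly: both substitute $[pxy]=0$ (from $x\simg y$) into the identity $[pxy]+[pyz]+[pzx]=0$ of Lemma~\ref{lem:threepointsum} to conclude $[pxz]=-[pzx]=[pyz]$ in $H_1(\real^3-D)$. Your explicit handling of the trivial case $x=y$ is a minor addition the paper leaves implicit; nothing else differs.
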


\begin{proof}
For any $z\in G'$ we have $[pxy]=0$ in equation~\eqref{eq:threepointsum}, and so $[pxz]=-[pzx]=[pyz]$. 
\end{proof}

\begin{lem}
Let $x$, $y\in G'$. Suppose that there is $z\in G'$ such that
\[
\link{pxz}{D} = \link{pyz}{D}
\]
for all cycles $D$ in $H$. Then $x\simg y$.  
\end{lem}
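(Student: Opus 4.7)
My plan is to observe that this statement is essentially the converse of Lemma~\ref{lem:simimpliessamelinking}, and that it follows by a short manipulation of the same three-term relation in $H_1(\real^3-D)$ that underlies that lemma. The trivial case $x=y$ requires no argument, so I would assume $x\neq y$. I would also dispose of the degenerate case $z\in\{x,y\}$ separately: if, say, $z=x$, then $pxz$ is a degenerate $1$-chain representing $0$ in $H_1(\real^3-D)$ for every cycle $D$ in $H$, and the hypothesis directly forces $[pyz]=[pyx]=0$ for every $D$, giving $x\simg y$ at once.

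In the remaining case $x$, $y$ and $z$ are three distinct vertices of $G'$, so Lemma~\ref{lem:threepointsum} applies: for any cycle $D$ in $H$ the identity
\[
[pxy]+[pyz]+[pzx]=0
\]
holds in $H_1(\real^3-D)$. The hypothesis $\link{pxz}{D}=\link{pyz}{D}$ translates to $[pxz]=[pyz]$, equivalently $[pyz]+[pzx]=0$. Substituting into the three-term relation gives $[pxy]=0$ for every cycle $D$ in $H$, which by the definition of $\simg$ yields $x\simg y$.

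I do not anticipate a serious obstacle here: the content of the lemma is captured entirely by Lemma~\ref{lem:threepointsum}, which already packages the three classes $[pxy]$, $[pyz]$, $[pzx]$ into a single linear relation, and the hypothesis then kills two of the three terms. The only subtlety worth flagging is the degenerate case, which I would handle first so that Lemma~\ref{lem:threepointsum} can be invoked cleanly with three distinct vertices.
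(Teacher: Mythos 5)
Your proposal is correct and takes essentially the same approach as the paper: both apply Lemma~\ref{lem:threepointsum} to deduce $[pxy]=[pxz]-[pyz]=0$ in $H_1(\real^3-D)$ for every cycle $D$ in $H$, whence $x\simg y$. Your separate treatment of the degenerate case $z\in\{x,y\}$ is a harmless extra precaution that the paper leaves implicit (the triangles $pxz$, $pyz$ presuppose $z\neq x,y$).
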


\begin{proof}
Let $D$ be a cycle of $H$. Applying Lemma~\ref{lem:threepointsum}, in $H_1(\real^3-D)$ we have
\[
[pxy] = [pxz]-[pyz]=0.
\]
Since this holds for any cycle $D$ in $H$ we conclude that $pxy$ does not link $H$, and therefore $x\simg y$. 
\end{proof}

We now introduce the hypothesis of the existence of $q$. Thus, unless some other hypothesis is given, \emph{we assume throughout the rest of this section that there is a vertex $q$ of $H$ common to all triangles of $H$ linking $G$}. 
Let $x,y\in G'$ be such that $x\not\simg y$. Then $pxy$ links $H$, so by Theorem~\ref{thm:star} it links $H$ in a star $qO_{xy}I_{xy}$ with apex $q$, because $q$ is common to all triangles of $H$ linking $G$. Note here that $\{O_{xy},I_{xy}\}$ is a partition of $H'=H-\{q\}$.

\begin{lem}
\label{lem:starequivalence}
Let $x,y\in G'$ such that $x\not\simg y$. Then the star $qO_{xy}I_{xy}$ depends only on the equivalence classes of $x$ and $y$. More precisely, if $x\simg z$ and $y\simg w$, then $O_{xy}=O_{zw}$ and $I_{xy}=I_{zw}$. 
\end{lem}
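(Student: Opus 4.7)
The plan is to reduce the equality $(O_{xy},I_{xy})=(O_{zw},I_{zw})$ to the equality of linking numbers $\link{pxy}{D}=\link{pzw}{D}$ for every cycle $D$ in $H$, and then to bridge from $(x,y)$ to $(z,w)$ in two steps, each handled by Lemma~\ref{lem:simimpliessamelinking}: first replace $x$ by $z$, then replace $y$ by $w$.

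First I would dispose of the distinctness bookkeeping required by Lemma~\ref{lem:simimpliessamelinking}. Since $\simg$ is transitive (Lemma~\ref{lem:unlinkedequivalence}), from $x\not\simg y$ together with $x\simg z$ and $y\simg w$ I deduce $z\not\simg y$, $x\not\simg w$, and $z\not\simg w$ (for example, $z\simg w$ would give $x\simg z\simg w\simg y$, contradicting $x\not\simg y$). In particular $z\neq y$, $x\neq w$ and $z\neq w$, so the triangles $pxy$, $pzy$ and $pzw$ each have three distinct vertices, and both $pzy$ and $pzw$ link $H$. By Theorem~\ref{thm:star} each links $H$ in a star; and since $q$ is common to all triangles of $H$ that link $G$, this star may be taken with apex $q$ (rewriting an improper star with apex $q$ via $p\{q\}I=qI\{p\}$ if necessary). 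Write these stars as $qO_{zy}I_{zy}$ and $qO_{zw}I_{zw}$.

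Next I would carry out the two replacements. Applying Lemma~\ref{lem:simimpliessamelinking} with the equivalence $x\simg z$ and third vertex $y$ gives $\link{pxy}{D}=\link{pzy}{D}$ for every cycle $D$ in $H$. Since a star $qOI$ is determined by the linking numbers of its triangle with triangles of $H$ (the triangles of $H$ linked positively by $pxy$ recover the pairs with one vertex in $O$ and one in $I$), this forces $(O_{xy},I_{xy})=(O_{zy},I_{zy})$. Applying the lemma again with the equivalence $y\simg w$ and third vertex $z$ (valid since $z\neq y,w$) gives $\link{pyz}{D}=\link{pwz}{D}$; reversing orientations yields $\link{pzy}{D}=\link{pzw}{D}$, and by the same principle $(O_{zy},I_{zy})=(O_{zw},I_{zw})$. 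Chaining these two equalities completes the proof.

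The routine computations hide no real difficulty, and the main subtlety is simply the distinctness bookkeeping carried out in the first step, which ensures that each application of Lemma~\ref{lem:simimpliessamelinking} is legitimate. A minor additional point is the passage from the star produced by Theorem~\ref{thm:star} to one with apex $q$, which is handled by the improper-star identity recorded in the definition of a star.
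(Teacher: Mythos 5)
Your proof is correct and takes essentially the same route as the paper's: two applications of Lemma~\ref{lem:simimpliessamelinking} (first replacing $x$ by $z$ with third vertex $y$, then $y$ by $w$ with third vertex $z$), combined with the orientation-reversal identity $qO_{ba}I_{ba}=qI_{ab}O_{ab}$ to chain the two equalities of stars. The distinctness bookkeeping you establish via transitivity of $\simg$ is left implicit in the paper but is the same underlying argument, so no further comment is needed.
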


\begin{proof}
Since $x\simg z$, by Lemma~\ref{lem:simimpliessamelinking} we have $\link{pxy}{D}=\link{pzy}D$ for all cycles $D$ in $H$. It follows that $qO_{zy}I_{zy}=qO_{xy}I_{xy}$. Similarly, since $y\simg w$, we have $qO_{yz}I_{yz}=qO_{wz}I_{wz}$. The result now follows from the fact that if the triangle $pab$ links $H$ in the star $qOI$, then $pba=-pab$ links $H$ in the star $-qOI=qIO$; that is, $qO_{ba}I_{ba}=qI_{ab}O_{ab}$.
\end{proof}

Our next step is to establish the cyclic ordering of $\mathcal{X}$, the set of equivalence classes of $\simg$. We do this below by introducing a method of cyclically ordering triples of points in $G'$. This will be well defined on equivalence classes, and we will show that we can use it to cyclically order them.

Let $(x,y,z)$ be an ordered triple of points in $G'$ such that $x\not\simg y\not\simg z\not\simg x$. Consider $K_4=\langle p,x,y,z\rangle$, with the faces labelled and oriented such that
\begin{align*}
C_0 &= zyx, & C_1 &= pyz, & C_2 &= pxz, & C_3 &= pyz.
\end{align*}
Note that $\sum_i C_i=0$ as a $1$-chain in $G$. Since $q$ is common to all triangles of $H$ linking $K_4$ the linking between $K_4$ and $H$ is described by Case~\eqref{case:K4Kn-commonvertex} of Theorem~\ref{thm:K4Kn}. Furthermore $xyz$ does not link $H$, and the other three triangles all do because
$x\not\simg y\not\simg z\not\simg x$, 
so exactly one of the following holds:
\begin{enumerate}
\renewcommand{\labelenumi}{(\alph{enumi})}
\renewcommand{\theenumi}{(\alph{enumi})}
\item
\label{case:xyz+ve}
the sets $O_{xy}$, $O_{yz}$, $O_{zx}$ are a partition of $H'$, and
\begin{align*}
I_{xy} &= O_{zx}\cup O_{yz}, &
I_{yz} &= O_{xy}\cup O_{zx}, &
I_{zx} &= O_{yz}\cup O_{xy}; 
\end{align*} 
or
\item
\label{case:xyz-ve}
the sets $I_{xy}$, $I_{yz}$, $I_{zx}$ are a partition of $H'$, and
\begin{align*}
O_{xy} &= I_{zx}\cup I_{yz}, &
O_{yz} &= I_{xy}\cup I_{zx}, &
O_{zx} &= I_{yz}\cup I_{xy}.
\end{align*} 
\end{enumerate}
We define
\[
\eps(x,y,z) = \begin{cases}
              +1 & \text{if case~\ref{case:xyz+ve} holds}, \\
              -1 & \text{if case~\ref{case:xyz-ve} holds}.
              \end{cases}
\]
By Lemma~\ref{lem:starequivalence} the value of $\eps(x,y,z)$ depends only on the equivalence classes of $x$, $y$ and $z$ with respect to $\simg$, so we may define $\eps$ on triples of distinct equivalence classes by
\[
\eps([x],[y],[z]) = \eps(x,y,z). 
\]
Observe that 
\[
\eps(x,y,z) = \eps(y,z,x) = \eps(z,x,y)
\]
since cyclically permuting $x,y,z$ does not change the triangles involved;
and
\[
\eps(x,y,z) = -\eps(y,x,z),
\]
since swapping $x$ and $y$ reverses the orientations of all the triangles, and $qO_{ba}I_{ba}=qI_{ab}O_{ab}$ for all $a\not\simg b$.

\begin{remark}
\label{rem:edgesum}
Observe that if $\eps(x,y,z)=1$, then
\[
O_{xz} = I_{zx} = O_{xy}\cup O_{yz}.
\]
Note also that $O_{xy}\cap O_{yz}=\emptyset$, so $\{O_{xy},O_{yz}\}$ is a partition of $O_{xz}$. 
\end{remark}

\begin{lem}
\label{lem:cycliconquads}
Let $x,y,z,w$ be distinct vertices in $G'$ such that no two belong to the same equivalence class. Suppose that $\eps(x,y,z)=\eps(x,z,w)=1$. Then $\{O_{xy},O_{yz},O_{zw},O_{wx}\}$ is a partition of $H'$, and $\eps(y,z,w)=\eps(y,w,x)=1$. 
\end{lem}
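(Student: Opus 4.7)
The plan is to derive the desired partition of $H'$ directly from the two hypotheses, and then to compute the star associated to the triangle $pwy$ using Lemma~\ref{lem:threepointsum}; once this star is known, both remaining $\eps$-values follow by inspection.

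By Remark~\ref{rem:edgesum} applied to $\eps(x,y,z)=1$, the set $O_{xz}$ is the disjoint union $O_{xy}\cup O_{yz}$; applied to $\eps(x,z,w)=1$, the collection $\{O_{xz},O_{zw},O_{wx}\}$ partitions $H'$. Substituting the former into the latter partition immediately yields the claimed partition $\{O_{xy},O_{yz},O_{zw},O_{wx}\}$ of $H'$.

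For the remaining two $\eps$-values I would identify the set $O_{wy}$. Applying Lemma~\ref{lem:threepointsum} to the distinct vertices $y,x,w\in G'$ gives
\[
[pwy]=[pxy]+[pwx]
\]
in $H_1(\real^3-D)$ for every cycle $D$ in $H$, and in particular for every triangle $D=quv$ with $u,v\in H'$ distinct. Consulting the stars $qO_{xy}I_{xy}$ for $pxy$ and $qO_{wx}I_{wx}$ for $pwx$, a short case analysis based on which of the four parts $O_{xy},O_{yz},O_{zw},O_{wx}$ each of $u,v$ belongs to (using $I_{xy}=H'-O_{xy}$ and $I_{wx}=H'-O_{wx}$) shows that $\link{pwy}{quv}=+1$ precisely when $u\in O_{xy}\cup O_{wx}$ and $v\in O_{yz}\cup O_{zw}$. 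Hence $O_{wy}=O_{xy}\cup O_{wx}$ and $I_{wy}=O_{yz}\cup O_{zw}$.

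Both remaining claims then follow at once. The collection $\{O_{yz},O_{zw},O_{wy}\}=\{O_{yz},O_{zw},O_{xy}\cup O_{wx}\}$ partitions $H'$, so $\eps(y,z,w)=+1$; and since $O_{yw}=I_{wy}=O_{yz}\cup O_{zw}$, the collection $\{O_{yw},O_{wx},O_{xy}\}$ also partitions $H'$, so $\eps(y,w,x)=+1$. The main obstacle I anticipate is the bookkeeping in the case analysis for $\link{pwy}{quv}$, but it is conceptually routine once Lemma~\ref{lem:threepointsum} reduces the question about $O_{wy}$ to a linear combination of linking numbers whose values are already encoded by the two given $\eps$-conditions.
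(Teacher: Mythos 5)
Your proof is correct, and its first half coincides with the paper's: both derive the four-set partition from $I_{zx}=O_{xy}\cup O_{yz}$ together with $O_{zx}=I_{xz}=O_{wx}\cup O_{zw}$ (you route this through Remark~\ref{rem:edgesum}; note only that the statement ``$\{O_{xz},O_{zw},O_{wx}\}$ partitions $H'$'' is case~\ref{case:xyz+ve} of the definition of $\eps$ applied to the triple $(x,z,w)$, not the remark itself). The second half is where you genuinely diverge. The paper never determines $O_{wy}$: it reads off from the partition that $O_{yz}\cap O_{zw}=\emptyset=O_{wx}\cap O_{xy}$, and this rules out case~\ref{case:xyz-ve} for the triples $(y,z,w)$ and $(y,w,x)$, since in that case $O_{yz}$ and $O_{zw}$ would both contain the nonempty set $I_{wy}$ (and likewise $O_{wx},O_{xy}\supseteq I_{yw}$), so could not be disjoint; case~\ref{case:xyz+ve} then holds by the dichotomy. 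You instead compute the star of the diagonal triangle $pwy$ outright: Lemma~\ref{lem:threepointsum} gives $[pwy]=[pxy]+[pwx]$ in $H_1(\real^3-D)$, and since $\link{pxy}{quv}$ is $+1$, $-1$ or $0$ according as $u$, $v$, or neither/both lies in $O_{xy}$ (similarly for $pwx$ with $O_{wx}$, these sets being disjoint), the sum shows $\link{pwy}{quv}=+1$ exactly when $u\in O_{xy}\cup O_{wx}$ and $v\in O_{yz}\cup O_{zw}$. Your identification $O_{wy}=O_{xy}\cup O_{wx}$ is then legitimate because $w\not\simg y$ by hypothesis, so $pwy$ really does link $H$ in a star with apex $q$, and such a star is determined by its linking numbers with the triangles $quv$; and concluding both $\eps$-values by exhibiting the case~\ref{case:xyz+ve} partitions is valid since the two cases exclude each other (if $\{O_{yz},O_{zw},O_{wy}\}$ partitions $H'$ then $I_{yz}\cap I_{zw}\supseteq O_{wy}\neq\emptyset$). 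As for what each approach buys: the paper's exclusion argument is shorter and needs no new linking computation, while yours is constructive and yields the stronger intermediate fact $O_{wy}=O_{wx}\cup O_{xy}$ --- in effect Remark~\ref{rem:edgesum} applied across the diagonal of the quadrilateral --- which is exactly the kind of identity the paper later rebuilds from Remark~\ref{rem:edgesum} in the induction of Proposition~\ref{prop:G-outsets}.
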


\begin{proof}
Since $\eps(x,y,z)=1$, the sets $O_{xy}$, $O_{yz}$, $O_{zx}$ are a partition of $H'$, and $I_{zx} = O_{yz}\cup O_{xy}$. Likewise $O_{xz}$, $O_{zw}$, $O_{wx}$ are a partition of $H'$, and $I_{xz} = O_{wx}\cup O_{zw}$. Then  $O_{zx}=I_{xz}= O_{wx}\cup O_{zw}$, and since $\{O_{zx},I_{zx}\}$ is a partition of $G'$, it must be the case that $\{O_{xy},O_{yz},O_{zw},O_{wx}\}$ is a partition of $G'$ too. In particular, $O_{yz}\cap O_{zw}=O_{wx}\cap O_{xy}=\emptyset$, so the ordered triples $(y,z,w)$ and $(y,w,x)$ must both satisfy case~\ref{case:xyz+ve} above.
\end{proof}

\begin{prop}
\label{prop:cyclicordering}
Suppose that $|\mathcal{X}|=\ell$. Then there is a bijection $i\mapsto X_i$ from $\{i: 0\leq i\leq \ell-1\}$ to $\mathcal{X}$ such that $\eps(X_i,X_j,X_k)=1$ for $i\neq j\neq k\neq i$ if and only if the strictly increasing permutation of $i,j,k$ is a cyclic permutation of $(i,j,k)$.
\end{prop}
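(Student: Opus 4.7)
The plan is to fix any equivalence class, call it $X_0$, use $\eps(X_0,\cdot,\cdot)$ to define a linear order on $\mathcal{X}\setminus\{X_0\}$, label the remaining classes $X_1,\dots,X_{\ell-1}$ in that order, and then check the cyclic condition on all triples. The case $\ell=2$ is vacuous since no triple of pairwise distinct indices exists, so any bijection will do.

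For $\ell\geq 3$, I would pick $X_0\in\mathcal{X}$ and, for distinct $Y_1,Y_2\in\mathcal{X}\setminus\{X_0\}$, declare $Y_1\prec Y_2$ iff $\eps(X_0,Y_1,Y_2)=1$. Since $\eps$ flips sign under the transposition of any two arguments, $\prec$ is a tournament on $\ell-1$ elements. The crux is to show $\prec$ is transitive. Suppose $Y_1\prec Y_2$ and $Y_2\prec Y_3$, so $\eps(X_0,Y_1,Y_2)=\eps(X_0,Y_2,Y_3)=1$; applying Lemma~\ref{lem:cycliconquads} with $(x,y,z,w)=(X_0,Y_1,Y_2,Y_3)$ yields $\eps(Y_1,Y_3,X_0)=1$, which equals $\eps(X_0,Y_1,Y_3)$ by cyclic invariance. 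Hence $Y_1\prec Y_3$. A transitive tournament is a strict linear order, so I can list $\mathcal{X}\setminus\{X_0\}$ as $X_1\prec X_2\prec\cdots\prec X_{\ell-1}$.

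I then need to verify that $\eps(X_i,X_j,X_k)=1$ whenever $0\leq i<j<k\leq \ell-1$. For $i=0$ this is immediate from the definition of $\prec$; for $0<i<j<k$ another application of Lemma~\ref{lem:cycliconquads}, this time with $(x,y,z,w)=(X_0,X_i,X_j,X_k)$ and using $\eps(X_0,X_i,X_j)=\eps(X_0,X_j,X_k)=1$, delivers $\eps(X_i,X_j,X_k)=1$.

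To conclude, for an arbitrary ordered triple $(a,b,c)$ of distinct indices, I would write $(a,b,c)=\sigma(a',b',c')$, where $(a',b',c')$ is the strictly increasing rearrangement and $\sigma\in S_3$. The $S_3$-symmetries of $\eps$ (cyclic invariance together with antisymmetry under transposition) give $\eps(X_a,X_b,X_c)=\mathrm{sgn}(\sigma)\,\eps(X_{a'},X_{b'},X_{c'})=\mathrm{sgn}(\sigma)$, which equals $+1$ exactly when $\sigma$ is even, i.e., when $(a',b',c')$ is a cyclic permutation of $(a,b,c)$. The only place where real work is required is extracting transitivity of $\prec$ from Lemma~\ref{lem:cycliconquads}; after that, the rest is bookkeeping with the $S_3$-symmetries of $\eps$.
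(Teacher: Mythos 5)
Your proposal is correct and takes essentially the same route as the paper's own proof: both fix a base class $X_0$, define a linear order on $\mathcal{X}\setminus\{X_0\}$ via $\eps(X_0,\cdot\,,\cdot)$, extract transitivity from Lemma~\ref{lem:cycliconquads}, and then reduce general triples to strictly increasing ones by a second application of that lemma. The only difference is cosmetic: you spell out the final $S_3$-symmetry bookkeeping (sign of the permutation) that the paper leaves implicit in the sentence ``it suffices to show that $\eps(x_i,x_j,x_k)=1$ whenever $i<j<k$.''
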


\begin{proof}
Fix $x_0\in G'$, and let $\mathcal{X}'=\mathcal{X}-\{[x_0]\}$. Define a relation $\preceq$ on $\mathcal{X}'$ by $[y]\preceq[z]$ if and only if $[y]=[z]$, or $[y]\neq[z]$ and $\eps(x_0,y,z)=1$. We claim that $\preceq$ is a total order on $\mathcal{X}'$. 

The relation $\preceq$ is reflexive by definition. To prove that it is antisymmetric, observe that if $[y]\neq[z]$, then exactly one of $\eps(x_0,y,z)=1$ and $\eps(x_0,z,y)=1$ holds, so exactly one of $y\preceq z$ and $z\preceq y$ holds. This also shows that the relation $\preceq$ is connex\footnote{A binary relation $\bowtie$ on a set $A$ is \emph{connex} if for all $x,y\in A$, the condition $x\bowtie y$ or $y\bowtie x$ holds.}, 
so it only remains to prove that $\preceq$ is transitive. This follows from Lemma~\ref{lem:cycliconquads}. Suppose that $y\preceq z$ and $z\preceq w$ for $y,z,w$ belonging to distinct classes. Then $\eps(x_0,y,z)=\eps(x_0,z,w)=1$, so by Lemma~\ref{lem:cycliconquads} $\eps(y,w,x_0)=1$. But $\eps(x_0,y,w)=\eps(y,w,x_0)$, so $y\preceq w$.

For $1\leq i\leq\ell-1$ choose $x_i\in G'$ such that $i\mapsto [x_i]$ is an order preserving bijection from $(\{i:1\leq i\leq\ell-1\},\leq)$ to $(\mathcal{X}',\preceq)$. Let $X_i=[x_i]$ for $0\leq i\leq\ell-1$. Then $i\mapsto X_i$ is a bijection from $\{i: 0\leq i\leq \ell-1\}$ to $\mathcal{X}$, and we claim it satisfies the required condition. 

To prove this, it suffices to show that $\eps(x_i,x_j,x_k)=1$ whenever $i<j<k$. 
For $[y],[z]\in \mathcal{X}'$ write $[y]\prec[z]$ if $[y]\neq[z]$ and $[y]\preceq[z]$. If $i=0$ then $\eps(x_0,x_j,x_k)=1$ by definition of $\preceq$, because $[x_j]\prec [x_k]$ if and only if $j<k$. Otherwise, since $0<i<j<k$ we have $x_i\prec x_j\prec x_k$, so $\eps(x_0,x_i,x_j)=\eps(x_0,x_j,x_k)=1$. Then $\eps(x_i,x_j,x_k)=1$ by Lemma~\ref{lem:cycliconquads}, and we are done.
\end{proof}

We now define the sets $Y_i$ of Theorem~\ref{thm:KmKn-pq}, and establish the structure of the stars $pO_{x_jx_{k}}I_{x_jx_{k}}$. 
As in the proof of Proposition~\ref{prop:cyclicordering}, for $0\leq i\leq \ell-1$ choose $x_i\in G'$ such that $X_i=[x_i]$. Let $Y_i=O_{x_ix_{i+1}}$ (subscripts on $x$ taken mod $\ell$), and set $\mathcal{Y}=\{Y_i:0\leq i\leq\ell-1\}$. Then:

\begin{prop}
\label{prop:G-outsets}
The set $\mathcal{Y}$ is a partition of $H'$, and if $j<k$ then
\begin{equation}
\label{eq:outsets}
O_{x_jx_k} = \bigcup_{i=j}^{k-1} Y_i.
\end{equation}
Consequently
\[
I_{x_jx_k} = H'-O_{x_jx_k} = \left(\bigcup_{i=0}^{j-1}Y_i\right)\cup
                             \left(\bigcup_{i=k}^{\ell-1}Y_i\right).
\]
\end{prop}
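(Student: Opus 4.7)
The plan is to prove the formula~\eqref{eq:outsets} first, by induction on $k-j$, and then deduce the partition claim as a consequence (essentially by "closing up the cycle").

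For the inductive proof of~\eqref{eq:outsets}, the base case $k=j+1$ is immediate from the definition $Y_j = O_{x_jx_{j+1}}$. For the inductive step with $j < k-1 < k$, Proposition~\ref{prop:cyclicordering} yields $\eps(x_j, x_{k-1}, x_k) = 1$ (the strictly increasing permutation of the indices is the tuple itself, hence trivially cyclic). Remark~\ref{rem:edgesum} then gives
\[
O_{x_j x_k} = O_{x_j x_{k-1}} \cup O_{x_{k-1} x_k}
\]
as a \emph{disjoint} union, and combining this with the inductive hypothesis $O_{x_j x_{k-1}} = \bigcup_{i=j}^{k-2} Y_i$ (which is itself a disjoint union by the same inductive step) produces $O_{x_j x_k} = \bigcup_{i=j}^{k-1} Y_i$, disjointly. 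The formula for $I_{x_jx_k}$ then follows since $\{O_{x_jx_k}, I_{x_jx_k}\}$ is a partition of $H'$.

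For the partition claim, I would handle $\ell = 2$ separately (with $Y_0 = O_{x_0x_1}$ and $Y_1 = O_{x_1x_0} = I_{x_0x_1}$ being a trivial partition of $H'$), and treat $\ell \geq 3$ using~\eqref{eq:outsets} with $j=0$, $k=\ell-1$. This gives
\[
O_{x_0 x_{\ell-1}} = Y_0 \cup Y_1 \cup \cdots \cup Y_{\ell-2}
\]
as a disjoint union. The last piece $Y_{\ell-1} = O_{x_{\ell-1}x_0}$ equals $I_{x_0 x_{\ell-1}} = H' \setminus O_{x_0 x_{\ell-1}}$, because reversing the orientation of a triangle swaps $O$ and $I$ in its linking star (as noted in the proof of Lemma~\ref{lem:starequivalence}). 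Therefore $Y_{\ell-1}$ is disjoint from $Y_0, \ldots, Y_{\ell-2}$ and together they exhaust $H'$, so $\mathcal{Y}$ is indeed a partition.

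There is no serious obstacle here: the content is entirely driven by the two facts already established, namely the disjoint decomposition $O_{xz} = O_{xy} \sqcup O_{yz}$ whenever $\eps(x,y,z)=1$ (Remark~\ref{rem:edgesum}), and the cyclic ordering property (Proposition~\ref{prop:cyclicordering}). The only subtlety to be careful about is the cyclic closure step: one must observe that $Y_{\ell-1}$ is not defined as an $O$-set across the "long way" $x_0$ to $x_{\ell-1}$ but rather as $O_{x_{\ell-1} x_0}$, and use the $O \leftrightarrow I$ swap under orientation reversal to identify it with the complement of the $O$-set across the "short way." Everything else is bookkeeping.
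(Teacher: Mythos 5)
Your proof is correct, and its core coincides with the paper's: the same induction (base case $k=j+1$ from the definition of $Y_j$; inductive step combining $\eps=1$ from Proposition~\ref{prop:cyclicordering} with Remark~\ref{rem:edgesum}) establishes~\eqref{eq:outsets}, and your ``closing up the cycle'' step --- identifying $Y_{\ell-1}=O_{x_{\ell-1}x_0}$ with $I_{x_0x_{\ell-1}}=H'-O_{x_0x_{\ell-1}}$ via the $O\leftrightarrow I$ swap under orientation reversal --- is exactly how the paper proves exhaustion (given $u\in H'$, either $u\in Y_{\ell-1}$ or $u\in I_{x_{\ell-1}x_0}=O_{x_0x_{\ell-1}}=\bigcup_{i=0}^{\ell-2}Y_i$). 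Where you genuinely diverge is in how pairwise disjointness of the $Y_i$ is obtained: the paper proves it first, as a standalone step, by applying Lemma~\ref{lem:cycliconquads} to the quadruple $(x_i,x_{i+1},x_j,x_{j+1})$ for non-adjacent indices (with subscripts mod $\ell$), whereas you fold disjointness into the induction itself --- Remark~\ref{rem:edgesum} makes each decomposition $O_{x_jx_k}=O_{x_jx_{k-1}}\cup O_{x_{k-1}x_k}$ disjoint, so the strengthened inductive hypothesis carries pairwise disjointness of $Y_j,\dots,Y_{k-1}$ along, and $Y_{\ell-1}$ is then disjoint from the rest for free as a complement. Your route is slightly leaner, bypassing Lemma~\ref{lem:cycliconquads} here and avoiding the paper's wrap-around case analysis, at the modest cost of explicitly tracking the disjoint-union form of the inductive hypothesis. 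One small omission: a partition has nonempty parts, and you never note that each $Y_i\neq\emptyset$; the paper records this in one line, and it is immediate because $x_i\not\simg x_{i+1}$ means $px_ix_{i+1}$ links $H$, so it links $H$ in a genuine star and $O_{x_ix_{i+1}}\neq\emptyset$. Your separate treatment of $\ell=2$ is harmless but unnecessary, since the general argument degenerates correctly there.
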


\begin{proof}
Each set $Y_i$ is nonempty, because $x_i\not\sim x_{i+1}$ and so $O_{x_ix_{i+1}}\neq\emptyset$. We show that $Y_i\cap Y_j=\emptyset$ if $i\neq j$. 

Note we consider subscripts mod $\ell$.
Without loss of generality, assume $i<j$. If $j=i+1$ then $Y_i\cap Y_{i+1}=\emptyset$ follows from $\eps(x_i,x_{i+1},x_{i+2})=1$, so suppose $j>i+1$. Consider the $4$-tuple $(x_i,x_{i+1},x_j,x_{j+1})$. Then $\eps(x_i,x_{i+1},x_j)=\eps(x_i,x_j,x_{j+1})=1$, so $\{O_{x_ix_{i+1}},O_{x_{i+1}x_j},O_{x_jx_{j+1}},O_{x_{j+1}x_i}\}$ is a 
partition of $H'$ by Lemma~\ref{lem:cycliconquads}. In particular, $Y_i\cap Y_j = O_{x_ix_{i+1}}\cap O_{x_jx_{j+1}}=\emptyset$, as required. 

The proof of equation~\eqref{eq:outsets} is by induction on $k$, using Remark~\ref{rem:edgesum} for the inductive step. 
The case $k=j+1$ holds by definition of $Y_j$. If the equation is true for some $k>j$, then since $\eps(x_j,x_k,x_{k+1})=1$, for $k+1$ we have 
\[
O_{x_jx_{k+1}} = O_{x_jx_k}\cup O_{x_kx_{k+1}} 
    = \left( \bigcup_{i=j}^{k-1} Y_i\right) \cup Y_k
    = \bigcup_{i=j}^{k} Y_i.
\]

To complete the proof we must show that $\bigcup_{i=0}^{\ell-1}Y_i=H'$. Given $u\in H'$, consider the triangle $px_{\ell-1}x_0$, which links $H$ in the star $qO_{x_{\ell-1}x_0}I_{x_{\ell-1}x_0}$. If $u\in O_{x_{\ell-1}x_0}=Y_{\ell-1}$ we are done; and otherwise we must have $u\in I_{x_{\ell-1}x_0}=O_{x_0x_{\ell-1}}=\bigcup_{i=0}^{\ell-2}Y_i$. 
\end{proof}

Since $q$ is common to all triangles of $H$ linking $G$, as in Definition~\ref{defn:unlinkedrelation} and Lemma~\ref{lem:unlinkedequivalence} we may define an equivalence relation $\simh$ on $H'$ by $u\simh v$ if and only if $quv$ does not link $G$. We show that $\mathcal{Y}$ is the set of equivalence classes of $\simh$ on $H'$:

\begin{cor}
The set $\mathcal{Y}$ is the set of equivalence classes of $\simh$ on $H'$ defined by $u\simh v$ if and only if $quv$ does not link $G$.
\end{cor}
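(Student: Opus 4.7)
The plan is to prove the two directions of the equivalence ``$u\simh v$ if and only if $u$ and $v$ lie in a common set $Y_i$'' using the explicit star description provided by Theorem~\ref{thm:KmKn-pq}(1), which has essentially been established by the preceding lemmas and Proposition~\ref{prop:G-outsets}.

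The key preliminary observation I would record is that, since $p$ is common to all triangles of $G$ linking $H$, the only triangles of $G$ that can have nonzero linking number with the triangle $quv\subset H$ are those containing $p$. Hence $quv$ links $G$ if and only if $\link{pxy}{quv}\neq0$ for some triangle $pxy$ of $G$. Furthermore, such a triangle $pxy$ is unlinked from $H$ (and hence from $quv$) whenever $x\simg y$, so we may restrict to triangles $pxy$ with $x\not\simg y$. For such a triangle, writing $x\in X_j$, $y\in X_k$ with $j<k$ (swapping if necessary), $pxy$ links $H$ in the star $qO_{jk}I_{jk}$, so $\link{pxy}{quv}\neq0$ precisely when $u$ and $v$ lie on opposite sides of the partition $\{O_{jk},I_{jk}\}$ of $H'$.

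For the forward direction, suppose $u,v\in Y_i$ for some common $i$. For any $j<k$, Proposition~\ref{prop:G-outsets} gives $O_{jk}=\bigcup_{r=j}^{k-1}Y_r$, so either $i\in\{j,\ldots,k-1\}$ and both $u,v\in O_{jk}$, or $i\notin\{j,\ldots,k-1\}$ and both $u,v\in I_{jk}$. In either case $\link{pxy}{quv}=0$, so by the preliminary observation $quv$ does not link $G$, giving $u\simh v$.

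For the converse, suppose $u\in Y_i$ and $v\in Y_{i'}$ with $i\neq i'$. Consider the specific triangle $T=px_ix_{i+1}$ in $G$. By definition $Y_i=O_{x_ix_{i+1}}$, so $T$ links $H$ in the star $qY_i(H'-Y_i)$. Since $u\in Y_i$ and $v\in H'-Y_i$, we obtain $\link{T}{quv}=\pm 1$, so $quv$ links $G$ and hence $u\not\simh v$. Combining the two directions, the equivalence classes of $\simh$ on $H'$ are exactly the sets $Y_i$, so $\mathcal{Y}$ is the partition induced by $\simh$. No step here presents a real obstacle; the only thing to be careful of is the bookkeeping between $j<k$ and $j>k$ in choosing the orientation that realises the star $qO_{jk}I_{jk}$, and the specific choice $j=i,\ k=i+1$ in the reverse direction avoids any wrap-around concern.
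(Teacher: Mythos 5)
Your proof is correct and follows essentially the same route as the paper: the paper likewise handles $u,v\in Y_i$ by noting (via Proposition~\ref{prop:G-outsets}) that $u$ and $v$ lie in the same part of $\{O_{xy},I_{xy}\}$ for every $x\not\simg y$, and handles $u\in Y_i$, $v\in Y_j$ with $i\neq j$ by exhibiting the same witness triangle $px_ix_{i+1}$. The only difference is that you make explicit the reduction (every cycle of $G$ linking $quv$ yields a linking triangle, which must contain $p$) that the paper leaves implicit, which is a reasonable bit of added care rather than a new idea.
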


\begin{proof}
Let $u,v\in H'$, and suppose that $u\in Y_i$, $v\in Y_j$. If $i\neq j$ then $u\in O_{x_ix_{i+1}}$ but $v\notin O_{x_ix_{i+1}}$, so $quv$ links $px_ix_{i+1}$. Therefore $u\not\simh v$. On the other hand, if $i=j$ then by Proposition~\ref{prop:G-outsets} $u$ and $v$ belong to the same part of $\{O_{xy},I_{xy}\}$ for all $x,y\in G'$ with $x\not\simg y$, so $quv$ does not link $pxy$ for any $x,y\in G'$ and therefore $u\simh v$.
\end{proof}

To complete the proof of Theorem~\ref{thm:KmKn-pq}, we establish the structure of the stars linked by triangles in $H$. This is done by re-expressing the linking described by the stars $qO_{x_ax_b}I_{x_ax_b}$ in terms of stars in $G$. 

\begin{prop}
\label{prop:H-outsets}
Suppose that $y_j\in Y_j$, $y_k\in Y_k$. If $j<k$ then $qy_jy_k$ links $G$ in the star $pP_{jk}J_{jk}$ in $G$, where
\begin{align*}
J_{jk} &= \bigcup_{i=j+1}^{k} X_i, &
P_{jk} &= G'-J_{jk} = \left(\bigcup_{i=0}^{j}X_i\right)\cup \left(\bigcup_{i=k+1}^{\ell-1}X_i\right).
\end{align*}
\end{prop}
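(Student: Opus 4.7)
The plan is to deduce Proposition~\ref{prop:H-outsets} directly from Proposition~\ref{prop:G-outsets} together with the hypothesis that $p$ is common to every triangle of $G$ linking $H$. Since any triangle of $G$ that links the cycle $qy_jy_k$ in $H$ automatically links $H$, such a triangle must contain $p$; and any triangle of $G$ that does not contain $p$ cannot belong to the star $pP_{jk}J_{jk}$ either. It therefore suffices to compute $\link{qy_jy_k}{pxy}=\link{pxy}{qy_jy_k}$ for every pair of distinct vertices $x,y\in G'$ and check that the resulting values $\pm 1$ or $0$ are exactly consistent with $qy_jy_k$ linking $G$ in the star $pP_{jk}J_{jk}$ as specified.

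For $x,y\in G'$ with $x\simg y$, the triangle $pxy$ does not link $H$ (by definition of $\simg$ when $x\neq y$), so it contributes $0$ on both sides: $x$ and $y$ lie in a common block $X_a$, and since $P_{jk}$ and $J_{jk}$ are each unions of such blocks, $pxy$ is not a triangle of $\pm pP_{jk}J_{jk}$. For $x\in X_a$ and $y\in X_b$ with $a\neq b$, by possibly swapping $x\leftrightarrow y$ (which negates both $pxy$ and the linking number) we may assume $a<b$. Applying Lemma~\ref{lem:simimpliessamelinking} twice gives $\link{pxy}{qy_jy_k}=\link{px_ax_b}{qy_jy_k}$, and by Proposition~\ref{prop:G-outsets} this equals $+1$ when $y_j\in O_{x_ax_b}$ and $y_k\in I_{x_ax_b}$, equals $-1$ when the containments are reversed, and equals $0$ otherwise.

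The key arithmetic check is then the following. Using $O_{x_ax_b}=\bigcup_{i=a}^{b-1}Y_i$ and the hypothesis $j<k$, the condition ``$y_j\in O_{x_ax_b}$ and $y_k\in I_{x_ax_b}$'' simplifies to $a\leq j<b\leq k$, while ``$y_j\in I_{x_ax_b}$ and $y_k\in O_{x_ax_b}$'' simplifies to $j<a\leq k<b$. On the other hand, $px_ax_b$ belongs to $pP_{jk}J_{jk}$ exactly when $x_a\in P_{jk}$ and $x_b\in J_{jk}$; using $J_{jk}=\bigcup_{i=j+1}^{k}X_i$ together with $a<b$, one sees this reduces to $a\leq j<b\leq k$, and symmetrically $-px_ax_b$ belongs to $pP_{jk}J_{jk}$ precisely when $j<a\leq k<b$. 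Thus, triangle by triangle, the linking numbers dictated by Proposition~\ref{prop:G-outsets} agree with those given by the star $pP_{jk}J_{jk}$, completing the proof. The only genuine difficulty is keeping the index bookkeeping straight; the cyclic ordering of $\mathcal{X}$ and $\mathcal{Y}$ from Proposition~\ref{prop:cyclicordering}, together with the partition property in Proposition~\ref{prop:G-outsets}, is what makes the two descriptions dovetail.
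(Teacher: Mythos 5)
Your proof is correct and takes essentially the same approach as the paper's: both reduce to determining exactly when $\link{px_ax_b}{qy_jy_k}=\pm1$ via Proposition~\ref{prop:G-outsets} and then match the resulting index conditions $a\leq j<b\leq k$ (respectively $j<a\leq k<b$) against membership in the star $pP_{jk}J_{jk}$. The only cosmetic difference is that you normalise to $a<b$ by swapping and negating where the paper treats $b<a$ directly, and you spell out the reductions (triangles omitting $p$, within-class pairs, and independence of representatives via Lemma~\ref{lem:simimpliessamelinking}) that the paper leaves implicit in ``the result follows.''
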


\begin{proof}
Let $x_a,x_b\in G$ be such that $x_a\in X_a$, $x_b\in X_b$ and $\link{px_ax_b}{qy_jy_k}=1$; that is, so that $y_j\in O_{x_ax_b}$ and $y_k\in I_{x_ax_b}$. If $a<b$ then by Proposition~\ref{prop:G-outsets} we have $y_j\in O_{x_ax_b}$ and $y_k\in I_{x_ax_b}$ if and only if $a\leq j<b$ and $b\leq k$, so $a\leq j<b\leq k$. Otherwise, if $b<a$ then by Proposition~\ref{prop:G-outsets} we have $y_j\in O_{x_ax_b}=I_{x_bx_a}$ and $y_k\in I_{x_ax_b}=O_{x_bx_a}$ if and only if $b\leq k<a$ and $j<b$, so $j<b\leq k<a$. Thus $\link{px_ax_b}{qy_jy_k}=1$ if and only if $b$ belongs to the interval $(j,k]$ and $a$ does not, and the result follows.
\end{proof}

\subsection{Realising Theorem~\ref{thm:KmKn-pq}}
\label{sec:commonvertex-realisation}

We now describe an embedding of $G$ and $H$ in $\real^3$ realising the linking described by Theorem~\ref{thm:KmKn-pq}. We will use co-ordinates $(z,t)$ for $\real^3$, where $z\in\complex$ and $t\in\real$. 

\begin{construction}
\label{cons:pq-embedding}
Let $\mathcal{X}=\{X_0,X_1,\ldots,X_{\ell-1}\}$,
$\mathcal{Y}=\{Y_0,Y_1,\ldots,Y_{\ell-1}\}$ be partitions of $G'$ and $H'$, respectively, where $\ell\geq 2$. 
Let $\zeta$ be the $(2\ell)$th root of unity $\zeta=e^{\pi i/\ell}$, and choose $\rho\in\real$ such that $\rho<|1-\zeta|/2$, so that the circles centred on $1$ and $\zeta$ with radius $\rho$ do not intersect. This choice also ensures that the circles do not contain $0$. Place $p$ at $(0,1)$ and $q$ at $(0,-1)$, and for $0\leq j\leq \ell-1$ 
\begin{itemize}
\item
place the points belonging to $X_j$ on the circle in the plane $t=-1$ with centre $\zeta^{2j}$ and radius $\rho$, so that they are equally spaced on this circle; and
\item
place the points belonging to $Y_j$ on the circle in the plane $t=+1$ with centre $\zeta^{2j+1}$ and radius $\rho$, so that they are equally spaced on this circle.
\end{itemize}
Connect $p$ to each vertex $x\in G'$ by a straight line, and similarly connect $q$ to each vertex $y\in H'$ by a straight line. No edge $px$ meets any edge $qy$, because the projections of these line segments into the plane $t=0$ meet only at $z=0$. To complete the embedding, join each pair of vertices in $G'$ by an embedded arc in the half space $t\leq -1$, and similarly join each pair of vertices in $H'$ by an embedded arc in the half space $t\geq 1$. 
\end{construction}

Figure~\ref{fig:pq-embedding} illustrates the embedding in the case $\ell=5$. We show that it realises the linking pattern of Theorem~\ref{thm:KmKn-pq} in Proposition~\ref{prop:pq-realised}, and then use Proposition~\ref{prop:commonvertex-weak} to show that it is in indeed weakly linked in Corollary~\ref{cor:pq-realised}. 

\begin{figure}
\begin{center}
\setlength{\radius}{3cm}
\setlength{\hgap}{0.25cm} 
\newcommand{\Rm}{0.9} 
\newcommand{\Bm}{1.5} 
\begin{tikzpicture}[vertex/.style={circle,draw,minimum size = 2mm,inner sep=0pt},thick,scale=0.64]
\node (p) at (0.2*\radius,0) [vertex,fill=blue] {};
\node (q) at (-0.2*\radius,0) [vertex,fill=red] {};
\foreach \i in {0,1,2,3,4}
    {\coordinate (x\i) at (\i*72:\Bm*\radius);
     \coordinate (y\i) at (36+\i*72:\Rm*\radius);}
\foreach \i/\count/\angle/\twist in {0/4/90/5,1/3/120/-15,2/3/120/0,3/4/90/5,4/5/72/0}
    \foreach \j in {1,...,\count}
        {\path (y\i) ++ (\twist+\j*\angle:\hgap) coordinate (y\i\j);
         \draw [red] (q) -- (y\i\j);}

\foreach \i/\count/\angle/\twist in {0/3/120/180,1/2/180/0,2/4/90/-10,3/5/72/-5,4/4/90/0}
    \foreach \j in {1,...,\count}
        {\path (x\i) ++ (\twist+\j*\angle:\hgap) coordinate (x\i\j);}

\foreach \i/\count in {0/3,1/2,2/4,3/5,4/4} 
    \foreach \j in {1,...,\count}
         \draw [white,line width=4] (p) -- (x\i\j);
\foreach \i/\count in {0/3,1/2,2/4,3/5,4/4} 
    \foreach \j in {1,...,\count}
         \draw [blue] (p) -- (x\i\j);
\draw [color=white,line width=16] (0,0) circle (\Rm*\radius); 
\foreach \r in {-0.05,0,0.05}
    {\draw [color=red] (0,0) circle (\Rm*\radius+\r*\radius);
     \draw [color=blue] (0,0) circle (\Bm*\radius+\r*\radius);}
\foreach \i in {0,...,5}
   {\coordinate (r-b\i) at (36+\i*72:1.1*\Rm*\radius); 
    \coordinate (r-m\i) at (108+\i*72:1.3*\Rm*\radius); 
    \coordinate (r-e\i) at (180+\i*72:1.1*\Rm*\radius); 
     \draw [white,line width=10] (r-b\i) to [out=100+\i*72,in=18+\i*72] (r-m\i) 
                       to [out=198+\i*72,in=100+\i*72] (r-e\i);  
     \draw [red,double,double distance=5pt] (r-b\i) to [out=100+\i*72,in=18+\i*72] (r-m\i) 
                       to [out=198+\i*72,in=100+\i*72] (r-e\i);} 
\draw [white,line width=10] (r-b1) to [out=100+72,in=18+72] (r-m1);
\draw [red,double,double distance=5pt] (r-b1) to [out=100+72,in=18+72] (r-m1); 
\foreach \i in {0,...,5}
   {\coordinate (b-b\i) at (\i*72:1.05*\Bm*\radius); 
    \coordinate (b-m\i) at (72+\i*72:1.3*\Bm*\radius); 
    \coordinate (b-e\i) at (144+\i*72:1.05*\Bm*\radius); 
     \draw [white,line width=10] (b-b\i) to [out=64+\i*72,in=-18+\i*72] (b-m\i) 
                       to [out=162+\i*72,in=64+\i*72] (b-e\i);  
     \draw [blue,double,double distance=5pt] (b-b\i) to [out=64+\i*72,in=-18+\i*72] (b-m\i) 
                       to [out=162+\i*72,in=64+\i*72] (b-e\i);} 
     \draw [white,line width=10] (b-b1) 
                       to [out=64+72,in=-18+72] (b-m1);  
     \draw [blue,double,double distance=5pt] (b-b1) 
                       to [out=64+72,in=-18+72] (b-m1); 
\foreach \i in {0,...,4}
    {\filldraw [color=red,fill=white] (y\i) circle (1.85*\hgap);
     \node at (y\i) {$Y_\i$};
     \filldraw [color=blue,fill=white] (x\i) circle (1.85*\hgap);
     \node at (x\i) {$X_\i$};}

\node [above right] at (p) {$p$};
\node [above left] at (q) {$q$};
\end{tikzpicture}
\caption{Construction~\ref{cons:pq-embedding} when $\ell=5$.}
\label{fig:pq-embedding}
\end{center}
\end{figure}
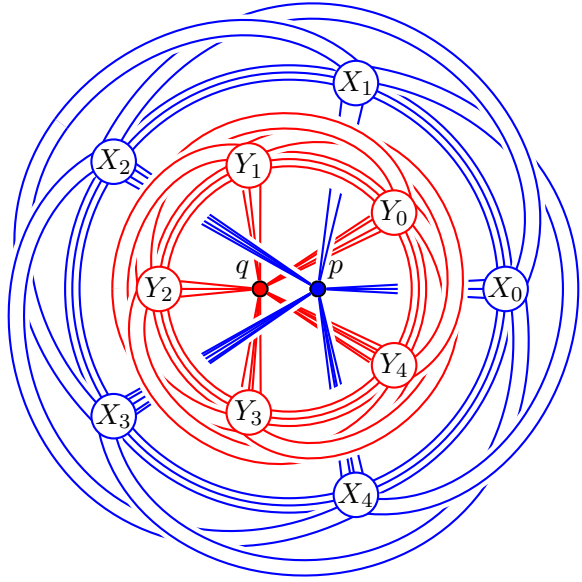

\begin{prop}
\label{prop:pq-realised}
The embedding of Construction~\ref{cons:pq-embedding} realises the linking pattern of Theorem~\ref{thm:KmKn-pq}.
\end{prop}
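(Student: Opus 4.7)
The plan is to compute $\link{px_ax_b}{qy_cy_d}$ directly for all $x_a\in X_a$, $x_b\in X_b$, $y_c\in Y_c$, $y_d\in Y_d$ with $a\neq b$ and $c\neq d$, and verify that the result matches part~(i) of Theorem~\ref{thm:KmKn-pq}; part~(ii) then follows from~(i) by symmetry of the linking number, since the sets $P_{jk},J_{jk}$ were defined precisely so that $px_ax_b$ belongs to the star $pP_{jk}J_{jk}$ iff~(i) forces $\link{px_ax_b}{qy_jy_k}=+1$ (and similarly with sign). Without loss of generality I take $a<b$ and $c<d$, and assume $\rho$ is small enough (and the sample points within each small circle are generic enough) to avoid the coincidences below.

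I would first isotope $T_1=px_ax_b$ and $T_2=qy_cy_d$ so that all their edges are straight line segments in $\mathbb{R}^3$: replace the arc $x_ax_b$ (originally in $t\leq-1$) with the chord at $t=-1$, and similarly for $y_cy_d$ in $t\geq 1$. Each interpolating disk lies in the half-space $t\leq-1$ or $t\geq 1$, where the opposing cycle is present only at the single vertex $q$ or $p$; a generic choice (handling in particular the borderline case $|a-b|=\ell/2$) makes this an isotopy in the complement of the opposing triangle. With straight edges, $D_1=\operatorname{conv}\{p,x_a,x_b\}$ is a flat disk bounded by $T_1$, so $\link{T_1}{T_2}$ equals the signed count of intersections of the edges of $T_2$ with $D_1$. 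The edge $y_cy_d$ sits in the plane $t=1$, meeting $D_1$ only at $p$, which it avoids generically, so only $qy_c$ and $y_dq$ can contribute.

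Parametrising $qy_c$ as $(s\zeta^{2c+1},2s-1)$ and intersecting with the barycentric parametrisation of $D_1$, one sees that $qy_c$ meets $D_1$ iff the ray from the origin in direction $\zeta^{2c+1}$ crosses the chord $[\zeta^{2a},\zeta^{2b}]$ in $\mathbb{C}$, iff $\zeta^{2c+1}$ lies on the shorter of the two arcs of the unit circle cut out by $\zeta^{2a}$ and $\zeta^{2b}$. Writing $A=\{a,a+1,\ldots,b-1\}$, this amounts to $c\in A$ when $b-a<\ell/2$ and to $c\notin A$ when $b-a>\ell/2$. (When $a=b$ the ``chord'' degenerates to a tiny sliver inside the $X_a$-circle, which for $\rho$ small is missed by every ray to $\zeta^{2c+1}$; this gives $\link{T_1}{T_2}=0$, consistent with the ``different $X_j$'' requirement.)

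For the sign of the intersection I compute $\operatorname{sign}(\vec{e}_c\cdot\vec{n})$ with $\vec{n}=(x_a-p)\times(x_b-p)$ and $\vec{e}_c=y_c-q$; a trigonometric expansion gives
\[
\vec{e}_c\cdot\vec{n}=8\sin\!\bigl((\theta_b-\theta_a)/2\bigr)\cos\!\bigl((\theta_b-\phi_c)/2\bigr)\cos\!\bigl((\theta_a-\phi_c)/2\bigr),
\]
where $\theta_a=2\pi a/\ell$, $\theta_b=2\pi b/\ell$, $\phi_c=\pi(2c+1)/\ell$, from which the sign works out to $+1$ in the case $b-a<\ell/2$ and $-1$ in the case $b-a>\ell/2$. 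Since $y_dq$ is traversed oppositely to $qy_d$, we get $\link{T_1}{T_2}=\sigma_c-\sigma_d$, which in both cases simplifies to $\mathbf{1}[c\in A]-\mathbf{1}[d\in A]$: the flip in the intersection sign between the two cases is exactly compensated by the flip between $A$ and its complement in the condition for intersection. Using $c<d$ this equals $+1$ iff $a\leq c<b\leq d$, $-1$ iff $c<a\leq d<b$, and $0$ otherwise --- exactly the star $qO_{ab}I_{ab}$ prescribed by Theorem~\ref{thm:KmKn-pq}. The main obstacle is this sign analysis: one must check that the two \emph{a priori} distinct configurations (short arc running $a\to b$ versus $b\to a$ cyclically) conspire, through the cancellation just described, to produce a single uniform linking-number formula; the remaining steps (isotopy, intersection criterion) are essentially mechanical.
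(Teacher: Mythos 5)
Your computational route is genuinely different from the paper's, which never flattens the triangle: the paper isotopes $px_jx_k$ to a curve made of the two straight spokes together with an arc of the unit circle in the plane $t=-1$, always taken in the \emph{positive} direction from $\zeta^{2j}$ to $\zeta^{2k}$, and uses the portion of the cone with apex $p$ over the unit circle as a Seifert surface. Then the only edges of $H$ meeting the surface are the spokes $qy_a$ with $j\leq a<k$, each with intersection number $+1$, and the star $qO_{jk}I_{jk}$ drops out uniformly with no case analysis. Your chord-based version is correct in the two cases you treat --- the trigonometric identity for $\vec{e}_c\cdot\vec{n}$ is right, and the cancellation between the flip of the crossing set and the flip of the sign is real --- but it has a genuine hole at the borderline $b-a=\ell/2$, which occurs whenever $\ell$ is even and is the \emph{only} case when $\ell=2$ (so for $\ell=2$ your dichotomy covers nothing). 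There the criterion ``$\zeta^{2c+1}$ lies on the shorter arc'' is ill-defined: $\zeta^{2a}$ and $\zeta^{2b}$ are antipodal, the chord misses the origin only by an amount of order $\rho$ depending on where the sample points sit on their small circles, and \emph{which} half of the rays crosses the chord is decided by that perturbation, not by your $a,b$ dichotomy. Your parenthetical assigns this case to the isotopy step, but the straightening isotopy is unproblematic there (it only needs to avoid the single point $q$); the issue lives in the crossing criterion and sign. The case is rescuable by the same mechanism --- at $\theta_b-\theta_a=\pi$ your formula gives sign $+1$ for $c\in A$ and $-1$ for $c\notin A$, exactly one of these sets is the crossing set, and either way the count collapses to $\mathbf{1}[c\in A]-\mathbf{1}[d\in A]$ --- but this verification must actually be written down; it is not a consequence of the two cases you prove.

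The second gap is that you never verify the common-vertex half of the pattern: Theorem~\ref{thm:KmKn-pq} asserts that $p$ is common to all triangles of $G$ linking $H$ and $q$ to all triangles of $H$ linking $G$, and this is also load-bearing for your deduction of part (ii), since ``$qy_jy_k$ links $G$ in the star $pP_{jk}J_{jk}$'' constrains $\link{T}{qy_jy_k}$ for \emph{all} triangles $T$ of $G$, including those avoiding $p$ --- symmetry of the linking number alone only transfers the values you computed, namely those against triangles through $p$. The paper dispatches this in one sentence with $2$-spheres (slightly pushed-in copies of the planes $t=\pm1$) separating $G'=G-\{p\}$ from $H$ and $H'=H-\{q\}$ from $G$. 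Your Seifert-disc setup handles triangles of $H'$ almost for free (their straightened edges lie in the plane $t=1$ and miss $D_1$), but nothing in your write-up addresses cycles of $G'$ at all. Both gaps are fixable in a few lines, but as written the proof is incomplete --- and the comparison suggests why the paper chose the cone surface: it makes the answer manifestly independent of whether the angular gap from $\zeta^{2j}$ to $\zeta^{2k}$ exceeds $\pi$, eliminating precisely the case analysis in which your argument develops its hole.
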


\begin{proof}
If $p$ is deleted from $G$ then there is a $2$-sphere separating $G'$ from $H$, so $G'$ does not link $H$. Similarly, if $q$ is deleted from $H$ then there is a $2$-sphere separating $H'$ from $G$, so $H'$ does not link $G$. Therefore $p$ is common to all triangles of $G$ linking $H$, and $q$ is common to all triangles of $H$ linking $G$. Let $x_j\in X_j$, $x_k\in X_k$, with $j<k$. We show that $px_jx_k$ links $H$ in the star $qO_{jk}I_{jk}$ of Theorem~\ref{thm:KmKn-pq}. This completely determines the linking between $G$ and $H$, because by Proposition~\ref{prop:H-outsets}
each triangle $quv$ in $H$ then links $G$ as described in Theorem~\ref{thm:KmKn-pq} also.

Let $C$ be the simple closed curve in $\real^3$ consisting of the line segment from $p=(0,1)$ to $(\zeta^{2j},-1)$, the arc of the unit circle in the plane $t=-1$ from $(\zeta^{2j},-1)$ to $(\zeta^{2k},-1)$ (taken in the positive direction, so that it contains the point $(\zeta^{k+j},-1)$), followed by the line segment from $(\zeta^{2k},-1)$ to $p$. There is an isotopy of $\real^3$ fixing $H$ and deforming $px_jx_k$ into $C$, so $\link{px_jx_k}{D}=\link{C}{D}$ for all cycles $D$ in $H$. We show that $C$ links $H$ in the star $qO_{jk}I_{jk}$.

The curve $C$ lies on the cone with apex $p$ that contains the unit circle in the plane $t=-1$. Let $F$ be the portion of this cone bounded by $C$. Then $F$ is a Seifert surface for $C$, so we may calculate $\link{C}{D}$ by counting signed intersections of $D$ with $F$. The only edges of $H$ which meet $F$ are edges of the form $qy_a$, with $y_a\in Y_a$ for $j\leq a<k$, and all such oriented edges meet $F$ with intersection number $+1$. It follows that a triangle $T$ of $H$ links $C$ if and only if it contains exactly one such edge, and the linking number is $+1$ if and only if $T$ orients the edge from $q$ to $y_a$. It follows that $C$, and hence $px_jx_k$, links $H$ in the star $qO_{jk}I_{jk}$, as required.
\end{proof}

To prove that the embedding of Construction~\ref{cons:pq-embedding} is weakly linked we will use the following proposition.

\begin{prop}
\label{prop:commonvertex-weak}
Let $m,n\geq 3$, and suppose that $G\cong K_m$ and $H\cong K_n$ are disjointly embedded in $\real^3$ such that
\begin{enumerate}
\item
there is a vertex $q$ of $H$ common to all triangles in $H$ that link $G$; and
\item
every triangle in $H$ that links $G$, links $G$ in a star. 
\end{enumerate}
Then $G$ and $H$ are not strongly linked.
\end{prop}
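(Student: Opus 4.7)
The goal is to show that $|\link{C}{D}|\leq 1$ for every cycle $C$ in $G$ and every cycle $D$ in $H$, which is exactly the statement that $G$ and $H$ are not strongly linked. The plan is to fan-decompose $D$ from the vertex $q$, use hypothesis~(1) to kill every triangle in the decomposition that does not contain $q$, and use hypothesis~(2) together with Lemma~\ref{lem:star-implies-weak} to bound the contribution of the at most one triangle that does.

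Step one is the subclaim that if $D$ is a cycle of $H$ not containing $q$, then $\link{C}{D}=0$. This follows by the standard fan decomposition: writing $D=v_0v_1\cdots v_{k-1}$ and expressing $D=\sum_{i=1}^{k-2}v_0v_iv_{i+1}$ as a $1$-chain in $H$, each summand $v_0v_iv_{i+1}$ is a triangle omitting $q$ and hence, by hypothesis~(1), does not link $G$. Therefore $[v_0v_iv_{i+1}]=0$ in $H_1(\real^3-C)$ for every $i$, and so $[D]=0$.

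Step two handles the case $q\in D$. I would write $D=qv_1v_2\cdots v_{k-1}$ and decompose $D=T+D'$ as $1$-chains, where $T=qv_1v_{k-1}$ is a triangle containing $q$ and $D'=v_1v_2\cdots v_{k-1}$ is a cycle in $H-\{q\}$ (or the zero chain in the degenerate case $k=3$, in which $D=T$). By step one, $[D']=0$ in $H_1(\real^3-C)$, so $\link{C}{D}=\link{C}{T}$. If $T$ does not link $G$, then $\link{C}{T}=0$; otherwise hypothesis~(2) says that $T$ links $G$ in a star, and applying Lemma~\ref{lem:star-implies-weak} with $T$ playing the role of the oriented simple closed curve and $G$ playing the role of $K_n$ yields that $T$ weakly links $G$, so $|\link{C}{T}|\leq 1$.

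I do not expect a serious obstacle: the argument is essentially bookkeeping of a fan decomposition combined with one invocation of Lemma~\ref{lem:star-implies-weak}. The only point worth a brief check is the degenerate case where $D$ is itself a triangle containing $q$, so that $D'$ collapses to the zero chain; but there the reduction step is vacuous and the star argument applies directly to $D=T$.
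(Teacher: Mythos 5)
Your proof is correct and is essentially identical to the paper's argument: the same fan decomposition from $q$ to kill cycles avoiding $q$, the same splitting $D=T+D'$ for cycles through $q$, and the same invocation of Lemma~\ref{lem:star-implies-weak} to bound $|\link{C}{T}|$. The only cosmetic difference is that the paper disposes of the triangle case up front (``we may assume $k\geq 4$'') rather than treating it as a degenerate instance of the decomposition, which is equivalent.
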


We note that the proposition may be used to give a second proof that the embedding of Figure~\ref{fig:KmKn} (left) is weakly linked.

\begin{proof}
Let $C$ be a cycle in $G$, and let $D=v_0v_1\ldots v_{k-1}$ be a $k$-cycle in $H$.  We will show that $C$ does not strongly link $D$. 
The argument is essentially identical to the proof of Lemma~\ref{lem:star-implies-weak}. 

If $q$ does not belong to $D$ then we decompose $D$ as the sum of the triangles $T_i=v_0v_iv_{i+1}$, for $1\leq i\leq k-2$. Since $q$ does not belong to $D$  but is common to all triangles in $H$ linking $G$ we have $\link{C}{T_i}=0$ for all $i$, and thus in the homology group $H_1(\mathbb{R}^3-C)$ we have
\[
[D] = \sum_{i=1}^{k-2} [T_i]=0.
\]
It follows that only cycles in $H$ that contain $q$ can link $G$.

Now suppose that $q$ belongs to $D$. 
By hypothesis and Lemma~\ref{lem:star-implies-weak} no triangle in $H$ strongly links $G$, so we may assume that $k\geq 4$.
Assume without loss of generality that $v_{0}=q$, and
let $T=v_0v_1v_{k-1}$, $D'=v_1v_2\ldots v_{k-1}$. Then $T$ is a triangle, $D'$ is a $(k-1)$-cycle, and $D=T+D'$ as $1$-chains in $H$. The cycle $D'$ does not contain $q$, so by the previous paragraph in $H_1(\mathbb{R}^3-C)$ we have
\[
[D]=[T]+[D']=[T]\in\{0,\pm1\}.
\]
Therefore $C$ does not strongly link $D$.
\end{proof}

\begin{cor}
\label{cor:pq-realised}
The embedding of Construction~\ref{cons:pq-embedding} is weakly linked.
\end{cor}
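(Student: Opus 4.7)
The plan is a direct application of Proposition~\ref{prop:commonvertex-weak}, whose two hypotheses I would verify using the work already done in Proposition~\ref{prop:pq-realised}. That proposition has already established (by the separating $2$-sphere argument, which shows $H'$ does not link $G$ and $G'$ does not link $H$) that $q$ is common to every triangle of $H$ linking $G$, so hypothesis~(1) of Proposition~\ref{prop:commonvertex-weak} holds. It also shows that each such triangle $qy_jy_k$ in $H$ links $G$ in the star $pP_{jk}J_{jk}$ of Theorem~\ref{thm:KmKn-pq}, verifying hypothesis~(2). Hence Proposition~\ref{prop:commonvertex-weak} applies, and no cycle in $G$ strongly links any cycle in $H$.

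It remains to exhibit a pair of linked cycles, so as to upgrade ``not strongly linked'' to ``weakly linked.'' Since $\ell\geq 2$ in Construction~\ref{cons:pq-embedding}, each part $X_j$ and $Y_j$ is nonempty, so I can pick $x_0\in X_0$, $x_1\in X_1$ and $y\in Y_0$. By Proposition~\ref{prop:pq-realised} the triangle $px_0x_1$ links $H$ in the star $qO_{01}I_{01}$ with $O_{01}=Y_0$, and in particular the triangle $qx_0y$ is linked by $px_0x_1$ with linking number $\pm 1$. Combining this with the conclusion of the previous paragraph, the embedding is weakly linked.

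I do not anticipate any real obstacles here: the two substantive results (Proposition~\ref{prop:pq-realised} and Proposition~\ref{prop:commonvertex-weak}) do all the heavy lifting, and the corollary is essentially a bookkeeping step assembling them. The only thing worth being careful about is that Proposition~\ref{prop:commonvertex-weak} by itself yields only the \emph{absence of strong linking}, which is why the second paragraph is needed to produce an explicit pair of linked cycles and justify the word ``weakly'' in the conclusion.
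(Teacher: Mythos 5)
Your proposal is correct and follows essentially the same route as the paper: both deduce the absence of strong linking by feeding the conclusions of Proposition~\ref{prop:pq-realised} into Proposition~\ref{prop:commonvertex-weak}, and your second paragraph exhibiting an explicit linked pair only makes explicit what the paper's phrase ``linked but not strongly linked'' leaves implicit. One small slip: your witness triangle in $H$ cannot be $qx_0y$ (that uses the vertex $x_0$ of $G$); take instead $qyy'$ with $y\in Y_0=O_{01}$ and $y'\in Y_1\subseteq I_{01}$, which links $px_0x_1$ with linking number $+1$.
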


\begin{proof}
By Proposition~\ref{prop:pq-realised} the embedding of Construction~\ref{cons:pq-embedding} realises the linking pattern of Theorem~\ref{thm:KmKn-pq}, so $q$ is common to every triangle in $H$ linking $G$, and each triangle in $H$ that links $G$, links $G$ in a star. Therefore $G$ and $H$ are linked but not strongly linked, by Proposition~\ref{prop:commonvertex-weak}.
\end{proof}

\end{document}